\documentclass[a4paper]{article}
\usepackage{fullpage}
\usepackage{tikz}

\usepackage[pdftex,breaklinks,colorlinks,
linkcolor=blue,
citecolor=blue,
urlcolor=blue]{hyperref}

\usepackage{enumitem}
\usepackage{amsmath,amssymb,amsfonts,amsthm}

\newtheorem{theorem}{Theorem}

\newtheorem{lemma}[theorem]{Lemma}
\newtheorem{proposition}[theorem]{Proposition}
\newtheorem{corollary}[theorem]{Corollary}

\theoremstyle{definition}
\newtheorem{definition}[theorem]{Definition}
\newtheorem{remark}[theorem]{Remark}

\author{Pablo Romero\footnote{Facultad de Ingenier\'ia, Universidad de la Rep\'ublica, Montevideo, Uruguay. E-mail address: \texttt{promero@fing.edu.uy}.}\footnote{Facultad de Ciencias Exactas y Naturales, Universidad de Buenos Aires. E-mail address: \texttt{promero@ic.fcen.uba.ar}.}}

\date{}

\makeatletter%
\begin{document}

\title{Existence of uniformly most reliable two-terminal graphs with distance constraints\thanks{An extended abstract of this work (without proofs) was published in the proceedings of the XII Latin-American Algorithms, Graphs and Optimization Symposium~\cite{RomeroLAGOS2-2025}. Several new results on the existence and nonexistence of $d$-UMRTTGs are included in this article.}}

\maketitle

\begin{abstract}\let\thefootnote\relax
A two-terminal graph is a simple graph equipped with two distinguished vertices, called terminals. Let $T_{n,m}$ be the class consisting of all non-isomorphic two-terminal graphs on $n$ vertices and $m$ edges. Let $G$ be any two-terminal graph in $T_{n,m}$, and let $d$ be any positive integer. For every $\rho\in [0,1]$, the \emph{$d$-constrained two-terminal reliability of $G$ at $\rho$}, denoted $R_G^d(\rho)$, is the probability that $G$ has some path of length at most $d$ joining its terminals after each of its edges is independently deleted with probability $\rho$. We say $G$ is a \emph{$d$-uniformly most reliable two-terminal graph} ($d$-UMRTTG) if for each $H$ in $T_{n,m}$ and every $\rho \in [0,1]$ it holds that $R_{G}^d(\rho)\geq R_{H}^{d}(\rho)$. Previous works studied the existence of $d$-UMRTTG in $T_{n,m}$ when $d$ is greater than or equal to $n-1$, or equivalently, when the distance constraint is dropped. In this work, a 
characterization of all $1$-UMRTTGs and $2$-UMRTTGs is given. Several structural properties of 3-UMRTTGs are established. For every $d\geq 3$, we prove the existence of a unique $d$-UMRTTG in $T_{n,\binom{n}{2}-1}$. 
In contrast, for every $d\geq 4$ and every $n\geq 11$, no $d$-UMRTTG exists in $T_{n,m}$ whenever $20\leq m\leq 3n-9$ or
$3n-5\leq m\leq \binom{n}{2}-2$. Additionally, for each $d\geq 5$, it is proved that no $d$-UMRTTG exists in $T_{n,m}$ when $n\geq 11$ and $m=3n-8$. 
\end{abstract}

\renewcommand{\labelitemi}{--}

\section{Introduction} \label{intro}
Let $T_{n,m}$ be the set of all non-isomorphic two-terminal graphs on $n$ vertices and $m$ edges. Let $G\in T_{n,m}$ and let $d$ be a positive integer. A spanning subgraph of $G$ is called a \emph{$d$-pathset} if it contains a path of length at most $d$ joining its terminals. For each $\rho\in[0,1]$, the \emph{$d$-constrained two-terminal reliability of $G$ at $\rho$}, denoted by $R_G^d(\rho)$, is the probability that the resulting spanning subgraph is a $d$-pathset after each edge of $G$ is independently deleted with probability $\rho$.

Given positive integers $d$, $n$, and $m$ such that $n\geq 2$ and $1\leq m\leq \binom{n}{2}$, we investigate whether there exists a two-terminal graph $G\in T_{n,m}$ satisfying $R_G^d(\rho) \geq R_H^d(\rho)$ for every $H$ in $T_{n,m}$ and every $\rho$ in $[0,1]$. Such a two-terminal graph $G$ is called a \emph{$d$-uniformly most reliable two-terminal graph} ($d$-UMRTTG).  

In this work, we study the existence of $d$-UMRTTGs in each nonempty class $T_{n,m}$ as a function of the distance constraint $d$. This article is organized as follows. Section~\ref{section:concepts} introduces the basic concepts and notation, while   Section~\ref{section:background} reviews related work. Section~\ref{section:1and2} characterizes all $1$-UMRTTGs and $2$-UMRTTGs in each nonempty set $T_{n,m}$. Section~\ref{section:3} establishes structural properties satisfied by all $3$-UMRTTGs and proves that there is a unique $d$-UMRTTG in $T_{n,\binom{n}{2}-1}$ when $d\geq 3$.

Whereas $d$-UMRTTGs always exist when 
$d\in \{1,2\}$, Section~\ref{section:contrast} shows that no $d$-UMRTTG exists in $T_{n,m}$ whenever $d\geq 4$, $n\geq 11$, and either $20\leq m\leq 3n-9$ or $3n-5\leq m\leq \binom{n}{2}-2$. Additionally, it is shown that no $d$-UMRTTG exists in $T_{n,3n-8}$ when $n\geq 11$ and $d\geq 5$. Finally, Section~\ref{section:conclusions} presents concluding remarks and open problems.

\section{Concepts}\label{section:concepts}
All graphs in this work are finite, simple, and undirected. Let $G$ be a graph. We denote the vertex set and edge set of $G$ by $V(G)$ and $E(G)$, respectively. Let $u$ and $v$ be in $V(G)$. We say that $u$ and $v$ are \emph{adjacent} if $uv\in E(G)$. The \emph{open neighborhood of $v$}, denoted by $N_G(v)$, is the set of all vertices adjacent to $v$. The \emph{degree of $v$ in $G$}, denoted by $d_G(v)$, is defined as $|N_G(v)|$.

A vertex $v$ is \emph{isolated} if $d_G(v)=0$, and \emph{universal} if it is adjacent to every other vertex in $V(G)$. The graph $G$ is \emph{regular} if all its vertices have the same degree, and \emph{almost-regular} if the degrees of any two vertices differ by at most one.

For each subset $W\subseteq V(G)$ and each vertex $u\in V(G)$, we denote by $uW$ the set of all edges $uw\in E(G)$ such that $w\in W$. We denote the path on $n$ vertices and the complete graph on $n$ vertices by $P_n$ and $K_n$, respectively. The \emph{length} of $P_n$ is $n-1$.

The \emph{distance} between $u$ and $v$ in $G$, denoted by $d_G(u,v)$, is the length of a shortest path in $G$ joining $u$ and $v$ (or infinity if no such path exists). A \emph{subgraph} of $G$ is a graph $H$ satisfying $V(H)\subseteq V(G)$ and $E(H)\subseteq E(G)$. If $G\neq K_1$, the \emph{edge-connectivity} of $G$ is the minimum number of edges whose removal disconnects $G$.

Two graphs $G$ and $H$ are \emph{isomorphic} if there exists a bijection $\varphi:V(G)\to V(H)$ such that $uv\in E(G)$ if and only if $\varphi(u)\varphi(v)\in E(H)$. Such a mapping $\varphi$ is called an \emph{isomorphism}. For each positive integer $n$, we denote by $p_n(G)$ the number of subgraphs of $G$ isomorphic to $P_n$.

A \emph{two-terminal graph} is a graph $G$ equipped with two distinguished vertices called \emph{terminals}. Throughout this work, the symbols $s$ and $t$ denote the terminal vertices of a two-terminal graph. A pair of two-terminal graphs $G$ and $H$ are \emph{isomorphic} if there exists an isomorphism $\varphi$ between $G$ and $H$ that preserves the set of terminals; that is, either $\varphi(s)=s$ and $\varphi(t)=t$, or $\varphi(s)=t$ and $\varphi(t)=s$.

\section{Related work}\label{section:background}
Let $G$ be any two-terminal graph in $T_{n,m}$ and let $d$ be any positive integer. A spanning subgraph $H$ of $G$ is a \emph{$d$-pathset} if $H$ has some path of length at most $d$ joining its terminals. For each $i\in\{1,2,\ldots,m\}$, let $N_i^d(G)$ be the number of $d$-pathsets of $G$ having exactly $i$ edges. For every $\rho$ in $[0,1]$, the $d$-constrained two-terminal reliability of $G$ can be written as follows:
\begin{equation}\label{equation:poly}
R_G^d(\rho) = \sum_{i=1}^{m}N_i^d(G)(1-\rho)^i\rho^{m-i}.    
\end{equation}

For convenience, we will sometimes consider the \emph{$d$-constrained two-terminal unreliability of $G$}, denoted $U_G^d(\rho)$, which is defined as $1-R_G^d(\rho)$. The key concept of this work is the following. 

\begin{definition}
Let $d$ be any positive integer. The two-terminal graph $G$ in $T_{n,m}$ is a \emph{$d$-uniformly most reliable two-terminal graph} ($d$-UMRTTG) if for each $H$ in $T_{n,m}$ and for every $\rho$ in $[0,1]$ it holds that $R_G^d(\rho)\geq R_H^d(\rho)$.     
\end{definition}

Given positive integers $d$, $n$ and $m$ such that $n\geq 2$ and $1\leq m\leq \binom{n}{2}$, we investigate whether there exists a $d$-UMRTTG in $T_{n,m}$. The following concept gives a sufficient condition for a two-terminal graph $G$ in $T_{n,m}$ to be $d$-UMRTTG.

\begin{definition}
Let $G$ and $H$ be in $T_{n,m}$ and let $d$ be a positive integer. We say that $G$ is \emph{$d$-stronger than $H$} if $N_i^d(G)\geq N_i^d(H)$ for every $i\in\{1,2,\ldots,m\}$ and $N_j^d(G)>N_j^d(H)$ for some $j\in\{1,2,\ldots,m\}$.
\end{definition}
A typical way to prove that $G$ is $d$-UMRTTG is to prove that $G$ is $d$-stronger than any other two-terminal graph $H$ in $T_{n,m}$. The proof of Lemma~\ref{lemma:dstrongimpliesdUMRTTG} is elementary; however, we include it for completeness. 
\begin{lemma}\label{lemma:dstrongimpliesdUMRTTG}
If $G$ is $d$-stronger than every $H$ in $T_{n,m}-\{G\}$ then $G$ is $d$-UMRTTG in $T_{n,m}$.
\end{lemma}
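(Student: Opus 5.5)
The plan is to compare the reliability polynomials term by term using the expansion in Equation~\eqref{equation:poly}. Fix any $H$ in $T_{n,m}$. If $H$ is isomorphic to $G$ (as two-terminal graphs), then $R_G^d=R_H^d$ identically, since the counts $N_i^d$ are invariant under terminal-preserving isomorphisms. Here we use that an isomorphism either fixes or swaps $s$ and $t$, and that paths of length at most $d$ joining the terminals are mapped to such paths. So the inequality $R_G^d(\rho)\geq R_H^d(\rho)$ holds trivially in that case.

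Now suppose $H\in T_{n,m}-\{G\}$. By hypothesis, $N_i^d(G)\geq N_i^d(H)$ for every $i\in\{1,\ldots,m\}$. For each fixed $\rho\in[0,1]$, every weight $(1-\rho)^i\rho^{m-i}$ is nonnegative. Using Equation~\eqref{equation:poly} for both graphs, we obtain
\[
R_G^d(\rho)-R_H^d(\rho)=\sum_{i=1}^{m}\bigl(N_i^d(G)-N_i^d(H)\bigr)(1-\rho)^i\rho^{m-i}\geq 0,
\]
since this is a sum of products of nonnegative factors. This holds for every $\rho\in[0,1]$, including the endpoints $\rho=0$ and $\rho=1$, where the convention $0^0=1$ is used. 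The strict inequality in the definition of $d$-stronger is not needed for the weak inequality required by the definition of $d$-UMRTTG.

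The only point needing care, rather than any real obstacle, is justifying Equation~\eqref{equation:poly} itself. Each spanning subgraph with $i$ edges arises with probability $(1-\rho)^i\rho^{m-i}$, because edges survive independently with probability $1-\rho$. The event that the surviving subgraph is a $d$-pathset is the disjoint union over the $d$-pathsets. Summing by number of edges gives the formula. Once this is in place, the argument is the one-line nonnegativity comparison above, and it applies to all $H\in T_{n,m}$. Hence $G$ is a $d$-UMRTTG.
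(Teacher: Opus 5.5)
Your proof is correct and follows essentially the paper's route, a term-by-term comparison of the expansion \eqref{equation:poly}. The only difference is that the paper uses the index $j$ with $N_j^d(G)>N_j^d(H)$ to get a strict inequality on $(0,1)$ and then checks $\rho\in\{0,1\}$ separately. You instead note that the weights $(1-\rho)^i\rho^{m-i}$ are nonnegative on all of $[0,1]$, so the weak inequality follows at once, endpoints included, without using the strict part of ``$d$-stronger''.
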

\begin{proof}
Let $d$ be any positive integer, and let $H$ be in $T_{n,m}-\{G\}$. 
As $G$ is $d$-stronger than $H$, it follows that $N_i^d(G)\geq N_i^d(H)$ and further, $N_j^d(G)>N_j^d(H)$ for some $j\in \{1,2,\ldots,m\}$. Equation~\eqref{equation:poly} gives that, for every $\rho \in (0,1)$, 
\begin{align*}
R_G^d(\rho) &= N_j^d(G)(1-\rho)^j\rho^{m-j}+
\sum_{i\in \{1,2,\ldots,m\}-\{j\}}N_i^d(G)(1-\rho)^i\rho^{m-i}\\
&>N_j^d(H)(1-\rho)^j\rho^{m-j} + \sum_{i\in \{1,2,\ldots,m\}-\{j\}}N_i^d(G)(1-\rho)^i\rho^{m-i}\\
&\geq N_j^d(H)(1-\rho)^j\rho^{m-j}+\sum_{i\in \{1,2,\ldots,m\}-\{j\}}N_i^d(H)(1-\rho)^i\rho^{m-i}=R_H^d(\rho).
\end{align*}
It suffices to prove that if $\rho$ is either $0$ or $1$ 
then $R_G^d(\rho)\geq R_H^d(\rho)$. 
If $\rho=1$ then all edges are deleted and there is no path joining the terminals hence $R_G^d(1)=R_H^d(1)=0$. As $N_j^d(G)>N_j^d(H)$ for some $j\in \{1,2,\ldots,m\}$, 
there is $d$-pathset of $G$ with $j$ edges. 
Therefore, if $\rho=0$ then no edges are deleted and $R_G^d(\rho)=1\geq R_H^d(\rho)$. 
We proved that for each $H$ in $T_{n,m}-\{G\}$ and for every $\rho$ in $[0,1]$ it holds that 
$R_G^d(\rho)\geq R_H^d(\rho)$. Hence, $G$ is $d$-UMRTTG, and the lemma follows.
\end{proof}

The following concepts give necessary conditions for a two-terminal graph $G$ in $T_{n,m}$ to be a $d$-UMRTTG.

\begin{definition}
Let $G$ be a two-terminal graph in $T_{n,m}$ and let $d$ be any positive integer. 
\begin{enumerate}[label=(\roman*)]
\item We say $G$ is a \emph{$d$-locally most reliable two-terminal graph near $0$} ($d$-LMRTTG near $0$) if for each $H$ in $T_{n,m}$ there exists $\delta>0$ such that  for every $\rho$ in $(0,\delta)$ it holds that $R_G^d(\rho)\geq R_H^d(\rho)$.     
\item We say $G$ is a \emph{$d$-locally most reliable two-terminal graph near $1$} ($d$-LMRTTG near $1$) if for each $H$ in $T_{n,m}$ there exists $\delta>0$ such that for every $\rho$ in $(1-\delta,1)$ it holds that $R_G^d(\rho)\geq R_H^d(\rho)$. 
\end{enumerate}
\end{definition}

Each $d$-UMRTTG is simultaneously $d$-LMRTTG near $0$ and near $1$. Remark~\ref{remark:local} is an essential tool to find $d$-LMRTTGs near $0$ or $1$. Its proof follows from elementary calculus and equation~\eqref{equation:poly}.
\begin{remark}[Brown and Cox~\cite{Brown-2014}]\label{remark:local}
Let $G$ and $H$ be in $T_{n,m}$ and 
let 
$d$ be a positive integer.
\begin{enumerate}[label=(\roman*)]
    \item\label{it1:local} If $N_i^d(G)=N_i^d(H)$ for all $i\in \{1,2,\ldots,k-1\}$ and  
    $N_k^d(G)>N_k^d(H)$, then there exists $\delta>0$ such that for every $\rho\in (1-\delta,1)$ it holds that $R_G^d(\rho)> R_H^d(\rho)$.
    \item\label{it2:local} If $N_{i}^d(G)=N_{i}^d(H)$ for all $i\in \{j+1,j+2,\ldots,m\}$ and  
    $N_{j}^d(G)>N_{j}^d(H)$, then there exists $\delta>0$ such that for every $\rho\in (0,\delta)$ it holds that $R_G^d(\rho)> R_H^d(\rho)$.
\end{enumerate}
\end{remark}

Observe that, for each nonempty set $T_{n,m}$ and each positive integer $d$, by finiteness there always exists $G$ in $T_{n,m}$ that is a $d$-LMRTTG near $0$ as well as a $d$-LMRTTG near $1$. Let us illustrate how to construct the set consisting of all $d$-LMRTTGs near $1$ (the reasoning to construct all $d$-LMRTTGs near $0$ is analogous). 
Define $T_{n,m}^{d}(1)=T_{n,m}$ and, for each $i\in \{1,2,\ldots,m-1\}$ define $T_{n,m}^d(i+1)$ as follows,
\begin{equation*}
T_{n,m}^d(i+1)=\{G: G\in T_{n,m}^d(i), \, N_{i+1}^d(G)\geq N_{i+1}^d(H) \text{ for each } H \text{ in } T_{n,m}^d(i)\}.
\end{equation*}
That is, at stage $i+1$ we keep  
precisely those two-terminal graphs maximizing the number of $d$-pathsets on $i+1$ edges among the two-terminal graphs at stage $i$.
\begin{lemma}\label{lemma:existenceLMRTTG}
For each positive integer $d$ and each nonempty set $T_{n,m}$, the set consisting of all $d$-LMRTTGs 
near $1$ is nonempty and equals $T_{n,m}^{d}(m)$.
\end{lemma}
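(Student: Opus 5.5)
We prove the two claims in order: first that $T_{n,m}^d(m)$ is nonempty, and then that it coincides with the set of $d$-LMRTTGs near $1$.

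\textbf{Nonemptiness.} We argue by induction on $i$. The set $T_{n,m}^d(1)=T_{n,m}$ is nonempty and finite by hypothesis. If $T_{n,m}^d(i)$ is nonempty and finite, then the integer-valued function $G\mapsto N_{i+1}^d(G)$ attains a maximum on it. Hence $T_{n,m}^d(i+1)$, which is exactly the set of maximizers, is nonempty. The sets are nested, so $T_{n,m}^d(m)\neq\emptyset$.

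\textbf{A useful observation.} Two graphs $G,H\in T_{n,m}^d(m)$ have identical coefficient vectors, $N_i^d(G)=N_i^d(H)$ for all $i$. We prove this by induction on the stages. At stage $i+1$ both graphs maximize $N_{i+1}^d$ over the same set, so they share the value $N_{i+1}^d$. For $i=1$ the stage $T_{n,m}^d(1)$ imposes no condition, but $N_1^d$ still matters. This gap is handled in the next part.

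\textbf{Members of $T_{n,m}^d(m)$ are $d$-LMRTTGs near $1$.} Let $G\in T_{n,m}^d(m)$ and $H\in T_{n,m}$. There are two cases.

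If the vectors $(N_i^d(G))_i$ and $(N_i^d(H))_i$ coincide, then $R_G^d=R_H^d$ identically, and the required inequality holds trivially.

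Otherwise, let $k$ be the first index where they differ. We claim $N_k^d(G)>N_k^d(H)$, so that Remark~\ref{remark:local}\ref{it1:local} applies and gives $R_G^d>R_H^d$ on some interval $(1-\delta,1)$. To justify the claim, first note that $N_1^d$ equals $1$ if $st\in E$ and $0$ otherwise. I would redefine the filtration to start with a stage selecting maximizers of $N_1^d$; the intended reading of the statement clearly requires this. With that convention, an induction shows $H\in T_{n,m}^d(k-1)$, because $H$ agrees with $G$ on the first $k-1$ coefficients and each stage only filters by those coefficients. Since $G\in T_{n,m}^d(k)$ maximizes $N_k^d$ over $T_{n,m}^d(k-1)$, we get $N_k^d(G)\geq N_k^d(H)$, and the inequality is strict because $k$ is a point of difference.

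\textbf{Every $d$-LMRTTG near $1$ lies in $T_{n,m}^d(m)$.} Conversely, let $G$ be a $d$-LMRTTG near $1$, and suppose $G\notin T_{n,m}^d(m)$. Let $i$ be the first stage with $G\notin T_{n,m}^d(i)$, and pick any $H\in T_{n,m}^d(m)$. As in the previous argument, $G$ and $H$ agree on all coefficients before index $i$, and $N_i^d(H)>N_i^d(G)$. By Remark~\ref{remark:local}\ref{it1:local}, $R_H^d>R_G^d$ on an interval $(1-\delta,1)$. This contradicts the defining property of $G$: that property provides some $\delta'$ with $R_G^d\geq R_H^d$ on $(1-\delta',1)$, and the two intervals overlap.

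\textbf{Main obstacle.} The only delicate point is index bookkeeping. The displayed recursion never filters on $N_1^d$, so I would either treat an initial stage $T^d_{n,m}(0)=T_{n,m}$ with $T^d_{n,m}(1)$ defined as the maximizers of $N_1^d$, or argue directly that the $N_1^d$ discrepancy is harmless. The latter fails, because $N_1^d$ is the leading term near $\rho=1$, so the reindexing is the step to settle first.
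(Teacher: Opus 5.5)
Your proof is correct and follows the paper's route: stagewise maximization over nested finite sets, then Remark~\ref{remark:local}\ref{it1:local} at the first index where the coefficient vectors differ. Your reindexing is warranted, because the paper's recursion starting from $T^d_{n,m}(1)=T_{n,m}$ never filters on $N_1^d$, while the paper itself later takes $T^3_{n,m}(1)$ to be the graphs containing $st$.

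One refinement to your closing remark: under the literal definition the statement genuinely fails only for $m\le 2$. For example, with $m=1$ and $n\ge 3$, the set $T^d_{n,1}(1)=T_{n,1}$ contains graphs without $st$. For $m\ge 3$ the discrepancy is harmless: every graph containing $st$ has $N_2^d\ge m-1>\lfloor m/2\rfloor$, and $\lfloor m/2\rfloor$ bounds $N_2^d$ for any graph without $st$.
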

\begin{proof}
As $T_{n,m}$ is nonempty and a maximum is attained over each finite set, $T_{n,m}^d(1)$ as well as each of the sets $T_{n,m}^d(2),T_{n,m}^d(3),\ldots,T_{n,m}^{d}(m)$ is nonempty. Observe that, by its construction, for each pair of two-terminal graphs $G$ and $G'$ in the set $T_{n,m}^d(m)$ and each $i\in \{1,2,\ldots,m\}$ it holds that $N_i^d(G)=N_i^d(G')$, thus equation~\eqref{equation:poly} gives that $R_{G}^{d}(\rho)=R_{G'}^{d}(\rho)$. It suffices to prove that for each $G$ in $T_{n,m}^d(m)$ and each $H$ in $T_{n,m}-T_{n,m}^d(m)$ there exists $\delta>0$
 such that for every $\rho\in (1-\delta,1)$ it holds that $R_G^d(\rho)>R_H^d(\rho)$. 
 As $H$ is not in $T_{n,m}^d(m)$, 
there exists $k$ in $\{1,2,\ldots,m\}$ such that $N_i^d(G)=N_i^d(H)$ for each $i\in \{1,2,\ldots,k-1\}$ but $N_k^d(G)>N_k^d(H)$. The statement then follows from Remark~\ref{remark:local}\ref{it1:local}.
\end{proof}

We will sometimes construct the sets $T_{n,m}^{d}(1),T_{n,m}^{d}(2),\ldots,T_{n,m}^{d}(m)$ and conclude that the set $T_{n,m}^{d}(m)$ is a single two-terminal graph $G$. In those cases a dichotomy arises:  either $G$ is $d$-UMRTTG in $T_{n,m}$ or no $d$-UMRTTG exists in $T_{n,m}$. This dichotomy is presented in Remark~\ref{remark:uniqueness}.

\begin{remark}\label{remark:uniqueness}
Assume that $T_{n,m}^d(m)=\{G\}$. Lemma~\ref{lemma:existenceLMRTTG} gives that for each $H$ in $T_{n,m}-\{G\}$ there exists $\delta>0$ such that for every $\rho$ in $(1-\delta,1)$ it holds that $R_G^d(\rho)>R_H^d(\rho)$. Therefore, either $G$ is the only $d$-UMRTTG in $T_{n,m}$ or there exists $\rho_0$ in $(0,1)$ and $H$ in $T_{n,m}$ such that $R_{H}^d(\rho_0)>R_G^d(\rho_0)$, in which case no $d$-UMRTTG exists in $T_{n,m}$.     
\end{remark}

The existence or nonexistence of $d$-UMRTTGs has only been studied in the literature when the distance constraint is dropped, that is, when $d\geq n-1$. Throughout the remainder of the article, when $d\geq n-1$, we omit the superscript $d$ and write LMRTTG, UMRTTG, $N_i(G)$, and $R_G(\rho)$ instead of $d$-LMRTTG, $d$-UMRTTG, $N_i^d(G)$, and $R_G^d(\rho)$. The study of the existence or nonexistence of UMRTTGs was pioneered by Bertrand et al.~\cite{Bertrand-2018} and by Xie et al.~\cite{Xie-2021}. In the remainder of this section we will give a list of results appearing in those works that will be useful for our purposes. First, a couple of definitions will be essential to understand such results. 

\begin{definition}
Let $n$ and $r$ be integers such that $n\geq 3$ and $r\in \{0,1,\ldots,n-3\}$. The two-terminal graph $A_{n,r}$ has vertex set $\{s,t\}\cup\{v_3,v_4,\ldots,v_n\}$ and edge set $\{st\} \cup \{sv_i,v_it, i\in \{3,4,\ldots,n\}\} \cup \{v_3v_{3+j}, j\in \{1,2,\ldots,r\}\}$.
\end{definition}
The two-terminal graph $A_{n,r}$ is depicted in Figure~\ref{anr}. 
\begin{definition}
Let $n$ and $m$ be integers such that $n\geq 5$ and $5\leq m \leq 2n-3$. The two-terminal graph $H_{n,m}$ has vertex set $\{s,t\}\cup \{v_3,v_4,\ldots,v_n\}$ and edge set $\{st\}\cup \{sv_i,v_it,i\in \{3,4,\ldots, \frac{m-1}{2}\}\}$ when $m$ is odd. If $m$ is even then the extra edge $v_3v_4$ is also added.
\end{definition}

Among all two-terminal graphs including the edge $st$ joining its terminals, $H_{n,m}$ has the maximum number of paths with $2$ edges joining its terminals. Bertrand et al.~\cite{Bertrand-2018} used the construction stated in Lemma~\ref{lemma:existenceLMRTTG} to determine the only LMRTTG near $1$ in $T_{n,m}$ for some pairs of positive integers $n$ and $m$.

\begin{figure}
    \centering
\begin{tikzpicture}
\filldraw[black] (4.5,2) circle (2pt); 
\node[above] (s) at (4.5,2) {$v_2=t$}; 

\filldraw[black] (0,0) circle (2pt); 
\node[below] at (0,0) {$v_3$}; 
\filldraw[black] (1,0) circle (2pt); 
\node[below] at (1,0) {$v_4$}; 
\filldraw[black] (2,0) circle (2pt); 
\node[below] at (2,0) {$v_5$}; 

\filldraw[black] (2.6,0) circle (.5pt); 
\filldraw[black] (3,0) circle (.5pt); 
\filldraw[black] (3.4,0) circle (.5pt); 

\filldraw[black] (3.6,1) circle (.5pt); 
\filldraw[black] (3.8,1) circle (.5pt); 
\filldraw[black] (4,1) circle (.5pt); 
\filldraw[black] (3.6,-1) circle (.5pt); 
\filldraw[black] (3.8,-1) circle (.5pt); 
\filldraw[black] (4,-1) circle (.5pt); 

\filldraw[black] (4,0) circle (2pt); 
\node[below] at (4,0) {$v_{r+2}$}; 
\filldraw[black] (5,0) circle (2pt); 
\node[below] at (5,0) {$v_{r+3}$}; 
\filldraw[black] (6,0) circle (2pt); 
\node[below] at (6,0) {$v_{r+4}$}; 
\filldraw[black] (7,0) circle (2pt); 
\node[below] at (7,0) {$v_{r+5}$}; 

\filldraw[black] (7.6,0) circle (.5pt); 
\filldraw[black] (8,0) circle (.5pt); 
\filldraw[black] (8.4,0) circle (.5pt); 

\filldraw[black] (6,1) circle (.5pt); 
\filldraw[black] (6.2,1) circle (.5pt); 
\filldraw[black] (6.4,1) circle (.5pt); 
\filldraw[black] (6,-1) circle (.5pt); 
\filldraw[black] (6.2,-1) circle (.5pt); 
\filldraw[black] (6.4,-1) circle (.5pt); 

\filldraw[black] (9,0) circle (2pt); 
\node[below] at (9,0) {$v_{n}$}; 

\filldraw[black] (4.5,-2) circle (2pt); 
\node[below] (s) at (4.5,-2) {$v_1=s$}; 

\draw (4.5,2) -- (0,0);
\draw (4.5,2) -- (1,0);
\draw (4.5,2) -- (2,0);
%\draw (4.5,1) -- (3,0);
\draw (4.5,2) -- (4,0);
\draw (4.5,2) -- (5,0);
\draw (4.5,2) -- (6,0);
\draw (4.5,2) -- (7,0);
\draw (4.5,2) -- (9,0);

\draw (4.5,-2) -- (0,0);
\draw (4.5,-2) -- (1,0);
\draw (4.5,-2) -- (2,0);
%\draw (4.5,1) -- (3,0);
\draw (4.5,-2) -- (4,0);
\draw (4.5,-2) -- (5,0);
\draw (4.5,-2) -- (6,0);
\draw (4.5,-2) -- (7,0);
\draw (4.5,-2) -- (9,0);

\draw (0,0) -- (1,0);
\draw (0,0) to[bend left] (2,0);
\draw (0,0) to[bend left] (4,0);
\draw (0,0) to[bend left] (5,0);

\filldraw[black] (1.2,.35) circle (.5pt); 
\filldraw[black] (1.3,.4) circle (.5pt); 
\filldraw[black] (1.4,.45) circle (.5pt); 

\draw (4.5,2) to[out=0,in=90] (9.5,0);
\draw (9.5,0) to[out=-90,in=0] (4.5,-2);
\end{tikzpicture}
    \caption{\centering Two-terminal graph $A_{n,r}$}
    \label{anr}
\end{figure}

\begin{lemma}[Bertrand et al.~\cite{Bertrand-2018}]\label{lemma:LMRTTG1}
Let $n$ and $m$ be integers such that $n\geq 5$ and $m\geq 5$. The following assertions hold.
\begin{enumerate}[label=(\roman*)]
\item\label{item1} If $m\leq 2n-3$ then $H_{n,m}$ is the only LMRTTG near $1$ in $T_{n,m}$.
\item\label{item2} If $2n-3<m\leq 3n-6$ then $A_{n,m-2n+3}$ is the only LMRTTG near $1$ in $T_{n,m}$. 
\end{enumerate}
\end{lemma}

A significant advance in the study of LMRTTGs near $1$ was given by Gong and Lin~\cite{Gong-2024}. They succeeded in characterizing all LMRTTGs near $1$ in a large number of nonempty classes $T_{n,m}$ such that $n\geq 6$ and $3n-6\leq m\leq \binom{n}{2}$; see~\cite{Gong-2024} for further details. 

Bertrand et al.~\cite{Bertrand-2018} succeeded in finding, for several pairs of integers $n$ and $m$ meeting the conditions of Lemma~\ref{lemma:LMRTTG1}, a two-terminal graph in $T_{n,m}$ whose two-terminal reliability is greater than that of the only LMRTTG near $1$ when $\rho=1/2$. In a first step, they derived an expression for $R_{A_{n,r}}(\rho)$.
\begin{lemma}[Bertrand et al.~\cite{Bertrand-2018}]
For each pair of integers $n$ and $r$ such that 
$n\geq 5$ and $r\in \{0,1,\ldots,n-3\}$ and every $\rho$ in $[0,1]$,
\begin{equation}\label{eq:Anr}
R_{A_{n,r}}(\rho) = 1 + \rho^{n+r}(3-2\rho)^r(2-\rho)^{n-r-3}-2\rho^{n-1}(1+\rho-\rho^2)^r(2-\rho)^{n-r-3}.
\end{equation} 
\end{lemma}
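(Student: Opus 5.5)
We compute the unreliability $U_{A_{n,r}}(\rho)=1-R_{A_{n,r}}(\rho)$ by conditioning on edge states, and write $q=1-\rho$ for the survival probability of an edge. The terminals are disconnected exactly when the edge $st$ fails (probability $\rho$) and the graph $A_{n,r}-st$ has no $s$--$t$ path. That graph splits into two edge-disjoint blocks that share only $s$ and $t$:
\begin{itemize}
\item the ``free'' vertices $v_{r+4},\ldots,v_n$, each carrying its own path $s v_i t$;
\item the ``fan'' on $\{v_3,\ldots,v_{r+3}\}$, consisting of the edges $sv_3$, $v_3t$, $sv_j$, $v_jt$, $v_3v_j$ for $j\in\{4,\ldots,r+3\}$.
\end{itemize}
Both blocks must fail, and they fail independently. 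Each free path fails with probability $1-q^2=\rho(2-\rho)$, so the free block contributes the factor $\rho^{n-r-3}(2-\rho)^{n-r-3}$.

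For the fan we condition on the two edges $sv_3$ and $v_3t$.
\begin{itemize}
\item If both are up (probability $q^2$), the terminals are connected, so the fan does not fail.
\item If exactly one is up (probability $2q\rho$ in total), say $sv_3$, then $s$ already reaches $v_3$. For each $j$, vertex $v_j$ is reached from $s$ unless both $sv_j$ and $v_3v_j$ fail, which has probability $\rho^2$. The path through $v_j$ then also needs $v_jt$, so it succeeds with probability $q(1-\rho^2)$. These events are independent over $j$, so the fan fails with probability $\bigl(1-q(1-\rho^2)\bigr)^r=\rho^r(1+\rho-\rho^2)^r$. The case with only $v_3t$ up is symmetric.
\item If both are down (probability $\rho^2$), the spokes are no longer independent, because $v_3$ can act as a relay. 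The fan fails iff no direct path $sv_jt$ survives and, in addition, either no $j$ has $sv_j,v_3v_j$ both up ($s$ cannot reach $v_3$) or no $k$ has $v_kt,v_3v_k$ both up ($v_3$ cannot reach $t$). By inclusion--exclusion over these two events, each resulting event factorizes over $j$.
\begin{itemize}
\item For ``no direct path and $s$ does not reach $v_3$'', the per-$j$ probability is $1-2q^2+q^3=\rho(1+\rho-\rho^2)$. The event ``no direct path and $v_3$ does not reach $t$'' has the same value by symmetry.
\item For the intersection of the two, inclusion--exclusion over the three two-edge paths at $v_j$ gives the per-$j$ probability $1-3q^2+2q^3=\rho^2(3-2\rho)$.
\end{itemize}
Hence the fan fails with probability $2\rho^r(1+\rho-\rho^2)^r-\rho^{2r}(3-2\rho)^r$.
\end{itemize}

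Adding the three conditional contributions, the fan failure probability is
\[
2q\rho\cdot\rho^r(1+\rho-\rho^2)^r+\rho^2\bigl(2\rho^r(1+\rho-\rho^2)^r-\rho^{2r}(3-2\rho)^r\bigr)=2\rho^{r+1}(1+\rho-\rho^2)^r-\rho^{2r+2}(3-2\rho)^r .
\]
Here we used $2q\rho+2\rho^2=2\rho$. Multiplying by $\rho$ for the edge $st$ and by $\rho^{n-r-3}(2-\rho)^{n-r-3}$ for the free block gives
\[
U_{A_{n,r}}(\rho)=2\rho^{n-1}(1+\rho-\rho^2)^r(2-\rho)^{n-r-3}-\rho^{n+r}(3-2\rho)^r(2-\rho)^{n-r-3}.
\]
Then $R_{A_{n,r}}=1-U_{A_{n,r}}$ is exactly equation~\eqref{eq:Anr}.

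The main obstacle is the case where both $sv_3$ and $v_3t$ are down, since the relay through $v_3$ couples all spokes. The plan handles it by expressing failure as a union of two events that each factor over $j$, and the only delicate step is checking the per-spoke inclusion--exclusion identities above. The degenerate cases are covered automatically: for $r=0$ the formula reduces correctly (the fan is just the path $sv_3t$), and the hypotheses $n\ge 5$, $r\le n-3$ only ensure that the exponents are nonnegative.
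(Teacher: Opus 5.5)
Your proof is correct. The reduction to the edge $st$, the $n-r-3$ free two-edge paths and the fan at $v_3$ is valid, because an $s$--$t$ path cannot pass between blocks that share only the terminals. I also checked the three conditional cases for $sv_3, v_3t$. In the ``both down'' case, connectivity is exactly ``a direct path $sv_jt$ survives, or some $j$ links $s$ to $v_3$ and some $k$ links $v_3$ to $t$'': if $j=k$, a direct path already exists. The per-spoke identities $1-2q^2+q^3=\rho(1+\rho-\rho^2)$ and $1-3q^2+2q^3=\rho^2(3-2\rho)$ hold, and the final exponents come out as $n-1$ and $n+r$.

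The paper gives no proof of this lemma; it imports it from Bertrand et al. So there is no in-paper argument to compare against, and your derivation is a self-contained substitute. It uses the same two devices the paper uses elsewhere:
\begin{itemize}
\item multiplying the failure probabilities of edge-disjoint $s$--$t$ blocks, as in Lemma~\ref{lemma:An0};
\item conditioning on how $v_3$ is attached to the terminals, as in the $N/S/T$ state count of Lemma~\ref{lemma:FdeB}.
\end{itemize}
The paper's counting in Lemma~\ref{lemma:FdeB} only works at $\rho=1/2$. Your probabilistic version gives the whole polynomial at once. Multiplying your $U_{A_{n,r}}(1/2)$ by $2^{2n-3+r}$ recovers equation~\eqref{eq:F}, which is a quick consistency check worth recording.
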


For each $G$ in $T_{n,m}$ 
we denote by $F(G)$ the number of spanning graphs of $G$ 
that do not have a path joining its terminals. 
Observe that $F(G)=2^m-\sum_{i=1}^{m}N_i(G)$. 
Then, equation~\eqref{equation:poly} gives that 
\begin{equation*}
R_G(1/2)=\sum_{i=1}^{m}\frac{N_i(G)}{2^m} = 1-\frac{F(G)}{2^m}.    
\end{equation*}
Then, $F(G)=2^m(1-R_{G}(1/2))$, and $F(A_{n,r})$ can be derived using equation~\eqref{eq:Anr}. 
\begin{equation}\label{eq:F}
F(A_{n,r})=3^{n-r-3}\times (4\times 5^{r}-4^{r}).  
\end{equation}

Bertrand et al. constructed a two-terminal graph $G_{n,m}$ such that $F(G_{n,m})<F(H_{n,m})$ when $m\leq 2n-3$ and $F(G_{n,m})<F(A_{n,m-2n+3})$ when $2n-3<m\leq 3n-9$, 
thus concluding the following result. 
\begin{lemma}[Bertrand et al.~\cite{Bertrand-2018}]\label{lemma:half}
Let $n$ and $m$ be integers such that $n\geq 11$ and $20\leq m \leq 3n-9$. Define $n'=\lceil \frac{n}{3}\rceil +2$, $r'=m-2n'+3$, and the two-terminal graph $G_{n,m}$ in $T_{n,m}$ given by $A_{n',r'}\cup \overline{K_{n-n'}}$. The following assertions hold:
\begin{enumerate}[label=(\roman*)]
\item\label{half1} If $m\leq 2n-3$ then $R_{G_{n,m}}(1/2)>R_{H_{n,m}}(1/2)$.
\item\label{half2} If $2n-3<m\leq 3n-9$ then $R_{G_{n,m}}(1/2) > R_{A_{n,m-2n+3}}(1/2)$.
\end{enumerate}
\end{lemma}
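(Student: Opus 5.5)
The plan is to work with the unreliability count $F$ and not with the polynomials. Since $R_G(1/2)=1-F(G)/2^m$ for every $G\in T_{n,m}$, both assertions amount to strict inequalities between failure counts: $F(G_{n,m})<F(H_{n,m})$ in case \ref{half1}, and $F(G_{n,m})<F(A_{n,m-2n+3})$ in case \ref{half2}. The added isolated vertices $\overline{K_{n-n'}}$ carry no edges, so they only multiply the number of spanning subgraphs by $1$. Hence $F(G_{n,m})=F(A_{n',r'})$, and equation~\eqref{eq:F} gives it in closed form once $0\le r'\le n'-3$.

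\textbf{Checking the parameters.} With $n'=\lceil n/3\rceil+2$ as written, $n=11$, $m=20$ give $n'=6$ and $r'=11>n'-3=3$. So $A_{n',r'}$ is not even defined for every $m$ in the stated range, and $n'$ must depend on $m$. I would read the construction as $n'=\lceil m/3\rceil+2$, which I expect is what is meant and is what I will prove. Under this reading I will verify that $0\le r'\le n'-3$ and $n'\le n$ throughout $20\le m\le 3n-9$:
\begin{itemize}
\item at $m=3n-9$ one gets $n'=n-1$ and $r'=n'-3$;
\item at $m=20$ one gets $n'=9$ and $r'=5$.
\end{itemize}
Writing $a=n'-r'-3$ for the number of $s$--$t$ paths of length two that avoid $v_3$, and $b=r'$, the edge count of $A_{n',r'}$ is $m=2a+3b+5$. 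The choice $n'\approx m/3$ is exactly what makes $a\in\{0,1,2\}$, i.e.\ it puts almost all edges into the ``chord'' part. Then $F(G_{n,m})=3^{a}(4\cdot 5^{b}-4^{b})$.

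\textbf{Computing the comparison graphs.} In $H_{n,m}$ the edge $st$ must be absent in any failure, and each disjoint path $s v_i t$ must be broken, which happens in $3$ of its $4$ states.
\begin{itemize}
\item For $m$ odd, $F(H_{n,m})=3^{(m-5)/2}$.
\item For $m$ even, the block on $s,t,v_3,v_4$ is a bridge (Wheatstone) network. It has exactly $16$ failing edge sets out of $32$, since its reliability at $1/2$ equals $1/2$ by self-duality. So $F(H_{n,m})=16\cdot 3^{(m-8)/2}$.
\end{itemize}
Case \ref{half1} then reduces to showing $3^{a}(4\cdot5^{b}-4^{b})$ is smaller than these quantities, with $2a+3b=m-5$ and $a\le 2$. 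Heuristically the chord part costs about $5^{1/3}\approx1.71$ per edge, against $3^{1/2}\approx1.732$ per edge for parallel paths. I would therefore take logarithms and bound $4\cdot 5^b-4^b<4\cdot5^b$. The result is an inequality of the form $C_a\,(5^{1/3}/3^{1/2})^{m}<1$ for explicit constants $C_a$, one for each residue class of $m$ modulo $6$. Since the ratio $5^{1/3}/3^{1/2}$ is below $1$, these hold for all large $m$, and I will check them down to $m=20$ by direct evaluation of the residue cases.

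\textbf{Case \ref{half2}.} Here $F(A_{n,r})=3^{\alpha}(4\cdot5^{\beta}-4^{\beta})$ with $\beta=r=m-2n+3$ and $\alpha=n-r-3$, under the same linear constraint $2\alpha+3\beta=2a+3b=m-5$. The cleanest route is an exchange argument on the function $f(\alpha,\beta)=3^{\alpha}(4\cdot5^{\beta}-4^{\beta})$. Replacing $(\alpha,\beta)$ by $(\alpha-3,\beta+2)$ preserves the edge count and multiplies $f$ by
\[
\frac{4\cdot5^{\beta+2}-4^{\beta+2}}{27\,(4\cdot5^{\beta}-4^{\beta})},
\]
which is less than $25/27<1$ because the subtracted term grows by a factor of $16$ while the leading term grows only by $25$. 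Repeating this move from $(\alpha,\beta)$ reaches $(a,b)$ with $a\le 2$. The only caveat is that $\alpha$ may not be congruent to $a$ modulo $3$. In that situation I will finish with a single move $(\alpha-1,\beta)$ combined with a parity adjustment, or else compare the final residue class directly. The hypothesis $m\le 3n-9$ guarantees $\alpha\ge a+1$, so at least one strictly improving step occurs.

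\textbf{Main obstacle.} I expect the bookkeeping of residues and small values to be the real difficulty. Specifically, this means the case-by-case verification near $m=20$ in \ref{half1}, where the subtracted term $4^{b}$ and the constant $4$ are not negligible, together with the mod-$3$ mismatch in \ref{half2}. For these I would first try to tabulate the finitely many base cases explicitly, and then propagate upward using the monotone ratio computed above.
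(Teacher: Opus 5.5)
The paper gives no proof of this lemma; it cites Bertrand et al.\ and only says that one compares failure counts. Your strategy of comparing $F$ via \eqref{eq:F} is that route. You are also right that $n'=\lceil n/3\rceil+2$ is a typo: membership in $T_{n,m}$ forces $3n'-6\ge m$, and $n'=\lceil m/3\rceil+2$ is the intended choice. The computations you set up, however, reduce case (i) to a false inequality. First, $A_{n',r'}$ has $2n'-3+r'=2a+3b+3$ edges, not $2a+3b+5$; your own check $m=20$, $a=1$, $b=5$ already shows this. Second, you read the index range in the definition of $H_{n,m}$ literally, which gives a graph with only $m-4$ (odd $m$) or $m-2$ (even $m$) edges. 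The intended $H_{n,m}$ is $st$ with $\lfloor (m-1)/2\rfloor$ internally disjoint $2$-paths, plus $v_3v_4$ when $m$ is even, as in the proof of Lemma~\ref{lemma:3-LMRTTG-near1-low}. Hence $F(H_{n,m})=3^{(m-1)/2}$ for odd $m$ and $16\cdot 3^{(m-6)/2}$ for even $m$. With your values the odd case is false, e.g.\ $F(G_{n,21})=4\cdot5^6-4^6=58404>3^8$. With the correct ones it holds only barely, $58404<3^{10}=59049$. Because the margins are this thin, the estimate $4\cdot5^b-4^b<4\cdot5^b$ fails for every $m\in\{20,\ldots,25\}$, so these base cases must be computed exactly. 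Propagating by $m\mapsto m+6$ is then legitimate: $F(H_{n,m})$ gains the factor $27$, while $F(G_{n,m})$ gains a factor below $27$ because $b\ge 2$.

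The more serious error is in case (ii): the exchange factor is not below $25/27$. Since $4\cdot5^{\beta+2}-4^{\beta+2}=25(4\cdot5^\beta-4^\beta)+9\cdot4^\beta$, the factor is strictly above $25/27$. It is below $1$ if and only if $8\cdot5^\beta>11\cdot4^\beta$, i.e.\ $\beta\ge 2$; your comparison of the growth factors $16$ and $25$ runs the wrong way. At $\beta=1$, which is exactly $m=2n-2$, the first move multiplies $F$ by $436/432>1$. If your bound held, two moves would give $F(A_{9,4})<F(A_{11,0})$, but $9(4\cdot5^4-4^4)=20196>3^9$. The repair is to group moves. Since $b\ge 5$ for every $m\ge 20$, starting from $\beta=1$ there are at least two moves, and together they are favorable: $4\cdot5^5-4^5=11476<16\cdot3^6=11664$. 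Every later move has $\beta\ge 3$ and improves. Your mod-$3$ worry is vacuous, since $2(\alpha-a)=3(b-\beta)$ forces $3\mid \alpha-a$; in fact $\alpha-a=3(n-2-\lceil m/3\rceil)\ge 3$. The same bookkeeping also handles case (i), because $H_{n,m}$ is $A_{k,0}$ (odd $m$) or $A_{k,1}$ (even $m$) plus isolated vertices. From $\beta=0$, two moves are still unfavorable ($3^7<2244$) but three are favorable ($3^{10}>58404$). Three moves are available since $b\ge 6$ for odd $m\ge 21$, and this is exactly where the hypothesis $m\ge 20$ is used.
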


Consequently, Bertrand et al. resolved the dichotomy presented in Remark~\ref{remark:uniqueness} thus proving the nonexistence of UMRTTGs in $T_{n,m}$ for each of the pairs $n$ and $m$ meeting the conditions of Lemma~\ref{lemma:half}. The authors summarize their findings in Theorem~\ref{theorem:bertrand}.
\begin{theorem}[Bertrand et al.~\cite{Bertrand-2018}]\label{theorem:bertrand}
For each pair of integers $n$ and $m$ such that $n\geq 11$ and $20 \leq m \leq 3n-9$ no UMRTTG exists in $T_{n,m}$.
\end{theorem}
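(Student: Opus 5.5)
This result is a consequence of Lemma~\ref{lemma:LMRTTG1}, Lemma~\ref{lemma:half} and Remark~\ref{remark:uniqueness}, applied in the unconstrained setting $d\geq n-1$. The plan is to show that in each class $T_{n,m}$ covered by the statement there is a unique LMRTTG near $1$, and that some other graph beats it at $\rho=1/2$. Fix $n\geq 11$ and $20\leq m\leq 3n-9$; then $n\geq 5$ and $m\geq 5$, so Lemma~\ref{lemma:LMRTTG1} applies. We split into two cases, $m\leq 2n-3$ and $2n-3<m\leq 3n-9$.

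\emph{Case $m\leq 2n-3$.} Lemma~\ref{lemma:LMRTTG1}\ref{item1} says that $H_{n,m}$ is the only LMRTTG near $1$ in $T_{n,m}$. By Lemma~\ref{lemma:existenceLMRTTG}, this means $T_{n,m}(m)=\{H_{n,m}\}$. Remark~\ref{remark:uniqueness} therefore leaves only two possibilities: either $H_{n,m}$ is the unique UMRTTG, or no UMRTTG exists. Lemma~\ref{lemma:half}\ref{half1} supplies $G_{n,m}\in T_{n,m}$ with $R_{G_{n,m}}(1/2)>R_{H_{n,m}}(1/2)$. This rules out the first possibility, so no UMRTTG exists.

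\emph{Case $2n-3<m\leq 3n-9$.} Since $3n-9<3n-6$, Lemma~\ref{lemma:LMRTTG1}\ref{item2} applies and gives $A_{n,m-2n+3}$ as the unique LMRTTG near $1$. The same reasoning goes through, now using Lemma~\ref{lemma:half}\ref{half2} with $\rho_0=1/2$.

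The only step needing care is checking that $G_{n,m}$ is a genuine member of $T_{n,m}$ that differs from the LMRTTG. Membership requires $0\leq r'\leq n'-3$ and $n'\leq n$, which is exactly what the hypotheses $n\geq 11$ and $20\leq m\leq 3n-9$ of Lemma~\ref{lemma:half} are designed to guarantee. Distinctness is automatic: two graphs with different reliability at $1/2$ cannot be isomorphic. So the argument amounts to assembling the cited results, and no step is a real obstacle.
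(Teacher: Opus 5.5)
Your proposal is correct and follows the same route the paper indicates: Lemma~\ref{lemma:LMRTTG1} gives a unique LMRTTG near $1$ ($H_{n,m}$ or $A_{n,m-2n+3}$), Lemma~\ref{lemma:half} exhibits $G_{n,m}$ beating it at $\rho=1/2$, and Remark~\ref{remark:uniqueness} then rules out any UMRTTG. One caveat on your side remark: with $n'=\lceil n/3\rceil+2$ as printed, the condition $r'\leq n'-3$ fails (for instance $n=11$, $m=20$ gives $n'=6$ and $r'=11>3$), so your membership check only works for the evidently intended $n'=\lceil m/3\rceil+2$, which gives $2n'-3\leq m\leq 3n'-6$ and $n'\leq n-1$; since you take $G_{n,m}\in T_{n,m}$ from the cited lemma, this does not affect the argument.
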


An elegant result on the nonexistence of UMRTTGs was given by Xie et al.~\cite{Xie-2021}. In the following, for each $G$ in $T_{n,m}$ we denote by $\hat{G}$ the simple graph that arises from $G$ by removing both of its terminals. 
\begin{theorem}[Xie et al.~\cite{Xie-2021}]\label{theorem:Xie}
Let $n$ and $m$ be integers such that $n\geq 11$ and $3n-6<m\leq \binom{n}{2}-2$. Let $G$ be any two-terminal graph in $T_{n,m}$. The following assertions hold.
\begin{enumerate}[label=(\roman*)]
\item\label{teo-regular} If $G$ is a LMRTTG near $0$ then its  terminals are universal and $\hat{G}$ is almost-regular. 
\item\label{teo-not-regular} If $G$ is a LMRTTG near $1$ then its  terminals are universal and $\hat{G}$ is not almost-regular.
\end{enumerate}
In particular, no UMRTTG exists in $T_{n,m}$ when $n\geq 11$ and $3n-6<m\leq \binom{n}{2}-2$.
\end{theorem}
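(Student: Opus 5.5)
The plan is to compare coefficients at the two ends of the interval $[0,1]$, using Remark~\ref{remark:local}, and to show that the extremal structures forced at the two ends are incompatible. For both parts I first argue that the terminals must be universal. I then read off from the leading nonconstant coefficients a condition on the degree sequence of $\hat G$: a ``flat'' condition near $0$ and a ``spread'' condition near $1$.

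\textbf{Near $0$.} Write $U_G(\rho)=\sum_{C}\rho^{|C|}(1-\rho)^{m-|C|}$, where $C$ ranges over edge sets whose removal separates $s$ from $t$. As $\rho\to 0$, $U_G$ is governed lexicographically by the number of separating sets of each size, starting from the minimum size $\lambda_{st}(G)$.
\begin{enumerate}[label=(\alph*)]
\item Since $\lambda_{st}(G)\le \min\{d_G(s),d_G(t)\}\le n-1$, with equality iff both terminals are universal (when $m\ge 2n-3$ this is attainable), an LMRTTG near $0$ must have $s,t$ universal.
\item With $s,t$ universal, an $s$--$t$ cut $\delta(S)$ with $s\in S$, $t\notin S$ has size $n-1+\sum$(contributions of $\hat G$). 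The cut $S=\{s\}$, its mirror $S=V-\{t\}$, and the cuts $S=\{s,v\}$ (with mirrors $S=V-\{t,v\}$) have sizes $n-1$ and $n-1+d_{\hat G}(v)$ respectively.
\item Because $m>3n-6$, $\hat G$ has more than $n-3$ edges, so $\delta(\hat G)\ge 1$ can be arranged. Larger sets $S$ then give strictly larger cuts than the smallest of these single-vertex cuts; this needs a short check using the edge count.
\item The first coefficient beyond the two trivial cuts is therefore the number of vertices of minimum degree $\delta(\hat G)$, counted at level $n-1+\delta(\hat G)$.
\item Minimizing this lexicographically forces $\delta(\hat G)$ to be as large as possible, i.e.\ $\lfloor 2|E(\hat G)|/(n-2)\rfloor$, and then the number of vertices attaining it to be as small as possible. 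Both are achieved exactly by almost-regular $\hat G$.
\end{enumerate}

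\textbf{Near $1$.} Use the sequential construction of Lemma~\ref{lemma:existenceLMRTTG}.
\begin{enumerate}[label=(\alph*)]
\item $N_1=1$ forces $st\in E(G)$.
\item $N_2$ counts the edge $st$ together with any other edge, plus the 2-paths $s$--$v$--$t$. It is therefore maximized precisely when all $n-2$ common neighbours exist, i.e.\ $s,t$ universal.
\item $N_3$ with $s,t$ universal: sets containing $st$ contribute $\binom{m-1}{2}$; sets containing a 2-path plus any edge are a fixed number; and 3-edge paths $s$--$u$--$v$--$t$ number $2|E(\hat G)|$. So $N_3$ is constant over the surviving class.
\item $N_4$: after subtracting the terms determined by $m$ and $n$, the only quantity depending on $\hat G$ is the number of 4-edge paths $s$--$u$--$v$--$w$--$t$. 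Equivalently, up to constants, this is the number of 2-paths in $\hat G$, namely $\sum_v\binom{d_{\hat G}(v)}{2}$. Other 4-edge pathsets, such as two 2-paths or a 3-path plus an edge, are determined by $n$ and $|E(\hat G)|$.
\item By convexity, for a fixed edge count $\sum_v\binom{d_{\hat G}(v)}{2}$ is minimized exactly by almost-regular degree sequences. It is not maximized by them whenever a non-almost-regular graph with the same number of edges exists.
\item Such a graph exists when $n-3<|E(\hat G)|\le\binom{n-2}{2}-2$, which is the range coming from $3n-6<m\le\binom{n}{2}-2$, using $n\ge 11$ for slack. Hence an LMRTTG near $1$ has $\hat G$ not almost-regular.
\end{enumerate}

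\textbf{Nonexistence.} Every UMRTTG is both an LMRTTG near $0$ and an LMRTTG near $1$. Parts~\ref{teo-regular} and~\ref{teo-not-regular} would then require $\hat G$ to be both almost-regular and not almost-regular, which is impossible.

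The main obstacle is the inclusion--exclusion bookkeeping at the first level where $\hat G$ matters. Near $1$, I must verify that every contribution to $N_4$ other than $\sum_v\binom{d_{\hat G}(v)}{2}$ is determined by $n$ and $m$ alone. Near $0$, I must verify that nontrivial cuts with $|S|\ge 3$ never tie with or beat the cuts $\{s,v\}$. I would handle the cut case by bounding $|\delta(S)|\ge (|S|-1)(n-|S|)+\dots$ from the universal terminals.
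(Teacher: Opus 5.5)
Your argument near $1$ is correct, and it is shorter than the route the paper describes. Instead of Byer's characterization of the maximizers of $p_3(\hat G)$ plus a check that none is almost-regular, you use strict convexity of $\binom{x}{2}$ (Lemma~\ref{lemma:Schur}): almost-regular graphs are the \emph{only minimizers} of $p_3$, so none of them is a maximizer once a single non-almost-regular graph with $n-2$ vertices and $m-2n+3$ edges exists. One exists: if $|E(\hat G)|\le\binom{n-3}{2}$, put all edges on $n-3$ vertices and leave one vertex isolated; otherwise take the complement of a star plus isolated vertices. The hypothesis $n\ge 11$ is not needed for this step.

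The gap is near $0$, in steps (c)--(d). With universal terminals, every $s$--$t$ cut $\delta(S)$ with $S=\{s\}\cup W$ contains exactly $n-1$ terminal edges, whatever $|S|$ is. Hence $|\delta(S)|=n-1+e_{\hat G}(W,V(\hat G)-W)$, and no bound growing with $|S|$, such as $(|S|-1)(n-|S|)$, can come from the terminals. The first level at which $\hat G$ matters is $n-1+\lambda(\hat G)$, not $n-1+\delta(\hat G)$. The coefficient there is $2\binom{m-n+1}{\lambda(\hat G)}$ plus twice the number of \emph{all} minimum edge cuts of $\hat G$ (compare Lemma~\ref{lemma:minimize:n-2+k}), not only the minimum-degree singletons. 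Your claim that sets with $|S|\ge 3$ never tie with or beat the cuts $\{s,v\}$ is false in general. It fails if $\hat G$ is disconnected, has a bridge between two dense parts, or has two adjacent vertices of minimum degree $2$.

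What the lexicographic comparison actually gives is the following. First, $\lambda(\hat G)$ must equal its maximum $\lambda^*=\lfloor 2|E(\hat G)|/(n-2)\rfloor$, which forces $\delta(\hat G)=\lambda^*$. Then the number $c(\hat G)$ of minimum edge cuts must be minimum. One has $c(\hat G)\ge n_{\lambda^*}(\hat G)\ge (n-2)(\lambda^*+1)-2|E(\hat G)|$, where $n_{\lambda^*}$ counts the vertices of degree $\lambda^*$; the second inequality is strict unless $\hat G$ is almost-regular. Extra cuts only hurt a competitor, so this finishes the argument \emph{provided} you exhibit an almost-regular $\hat G$ with $\lambda(\hat G)=\lambda^*$ whose only minimum cuts are single vertices. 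You do not construct one, and it need not exist. If $\lambda^*=2$ and $n-2<|E(\hat G)|<\frac{6}{5}(n-2)$ (e.g.\ $n=12$, $m=32$), counting edge-ends shows two vertices of degree $2$ must be adjacent. They form a nontrivial $2$-edge cut, so the minimum-cut counts have to be compared by a finer argument.

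Nor can you repair the step by switching from $\delta$ to $\lambda$ and quoting Harary, as the paper's summary does. Maximum edge-connectivity alone does not force almost-regularity: the wheel on $n-2$ vertices (here $m=4n-9$) has $\lambda=3=\lfloor 4(n-3)/(n-2)\rfloor$ but a hub of degree $n-3$. So the coefficient at level $n-1+\lambda^*$ is genuinely needed.
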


We briefly describe the method employed by Xie et al. to prove Theorem~\ref{theorem:Xie}.
Let $n$ and $m$ be integers such that $n\geq 11$ and $3n-6< m \leq \binom{n}{2}-2$. Let $T_{n,m}^{u}$ be the collection of all two-terminal graphs in $T_{n,m}$ such that both of its terminals are universal. 

On the one hand, the authors proved that if $G$ is a LMRTTG near $0$ then $G$ is in $T_{n,m}^u$ and the simple graph $\hat{G}$ has the maximum edge-connectivity. It is known~\cite{Harary-1962} that if a graph has the maximum edge-connectivity then it is almost-regular, which gives Theorem~\ref{theorem:Xie}\ref{teo-regular}.  

On the other hand, the authors proved that $T_{n,m}(3)$ equals the set $T_{n,m}^u$. 
Then, the authors derived the following expression for $N_4(G)$ for each two-terminal graph $G$ in $T_{n,m}^{u}$:
\begin{equation*}
N_4(G)= \binom{m-1}{3} + (n-2)\binom{m-3}{2}-\binom{n-2}{2}+2(m-2n+3)(m-6)+2p_3(\hat{G}). 
\end{equation*}
Therefore, a two-terminal graph $G$ is in $T_{n,m}(4)$ if and only if both of its terminals are universal and the simple graph $\hat{G}$ is an almost-regular graph maximizing the number of subgraphs that are isomorphic to $P_3$, that we denote by $p_3(\hat{G})$. Byer~\cite{Byer-1999} proved that every simple graph $\hat{G}$ maximizing $p_3(\hat{G})$ belongs to a subclass of threshold graph. Therefore, Xie et al.~\cite{Xie-2021} proved that none of the threshold graphs found by Byer is almost-regular. Consequently, the set $T_{n,m}(4)$ consists of some two-terminal graphs $G$ in $T_{n,m}^u$ such that $\hat{G}$ is not almost-regular. As $T_{n,m}(m) \subseteq T_{n,m}(4)$, 
Theorem~\ref{theorem:Xie}\ref{teo-not-regular} then follows from Lemma~\ref{lemma:existenceLMRTTG}.

Observe that if $m=\binom{n}{2}$ then $T_{n,m}$ consists of the complete graph $K_n$ equipped with two terminals, which is UMRTTG. Now, if 
$m=\binom{n}{2}-1$ then $T_{n,m}$ consists of precisely $3$ non-isomorphic two-terminal graphs here called $X_n$, $Y_n$ and $Z_n$, 
where $X_n=K_n-st$, $Y_n=K_n-sv_3$, and $Z_n=K_n-v_3v_4$. 
\begin{theorem}\label{theorem:almostcomplete1}
For each integer $n$ such that $n\geq 4$, 
the only UMRTTG in $T_{n,\binom{n}{2}-1}$ 
is the two-terminal graph $Z_n$ defined as $K_n-v_3v_4$.
\end{theorem}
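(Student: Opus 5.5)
The plan is to show that $Z_n$ is stronger (in the sense of the definition of $d$-stronger, with $d\geq n-1$) than both $X_n$ and $Y_n$, and then invoke Lemma~\ref{lemma:dstrongimpliesdUMRTTG}. Rather than compare the numbers $N_i$ directly, I will compare the complementary quantities. For a two-terminal graph $G$, let $C_i(G)$ denote the number of edge subsets of size $i$ whose removal disconnects $s$ from $t$, i.e. the number of $st$-cut sets of size $i$. Then $N_{m-i}(G)=\binom{m}{i}-C_i(G)$. Hence $Z_n$ is stronger than $H$ if and only if $C_i(Z_n)\leq C_i(H)$ for every $i$, with strict inequality for some $i$.

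Next I will set up a common framework for the three graphs. Each graph is $K_n$ minus a single edge $e$. A set $S\subseteq E(K_n)-\{e\}$ is an $st$-cut of $K_n-e$ exactly when $S\cup\{e\}$ is an $st$-cut of $K_n$, that is, when $S\cup\{e\}$ contains every edge of the cut $\delta(W)$ for some $W$ with $s\in W$ and $t\notin W$. The key step is to build, for each $i$, an injection from the cut sets of $Z_n$ of size $i$ into those of $Y_n$, and another into those of $X_n$.

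\textbf{Comparison with $X_n$.} Every cut of $X_n$ must avoid the edge $st$, whereas every cut of $K_n$ must contain $st$. So $S$ is a cut of $X_n$ iff $S\cup\{st\}$ is a cut of $K_n$. Similarly, $S$ is a cut of $Z_n$ iff $S\cup\{v_3v_4\}$ is a cut of $K_n$. Since $st$ always lies in $\delta(W)$, $S$ is a cut of $Z_n$ iff $S\supseteq\delta(W)-\{v_3v_4\}$ for some $W$, and this set always contains $st$. I will use the natural bijection between $E(K_n)-\{v_3v_4\}$ and $E(K_n)-\{st\}$ that swaps $st$ with $v_3v_4$. Under this map a cut $S$ of $Z_n$ goes to $S'=S-\{st\}\cup\{v_3v_4\}$, and I must check that $S'\cup\{st\}\supseteq\delta(W)$; this holds because $S\cup\{v_3v_4\}\supseteq\delta(W)$. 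This gives $C_i(Z_n)\leq C_i(X_n)$. Strictness comes from the cut $\delta(\{s\})-\{sv_3\}\cup\{\ldots\}$; concretely, at $i=n-2$, $X_n$ has cuts $\delta(\{s\})-\{st\}$ and $\delta(\{t\})-\{st\}$ of size $n-2$, while $Z_n$ has none of that size, since its minimum cut has size $n-1$.

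\textbf{Comparison with $Y_n$.} Use the swap between $sv_3$ and $v_3v_4$. A cut $S$ of $Z_n$ satisfies $S\cup\{v_3v_4\}\supseteq\delta(W)$. I want $\phi(S)\cup\{sv_3\}\supseteq\delta(W')$ for some $W'$, possibly after applying the automorphism of $K_n$ that exchanges $s$-side roles, or swaps $v_3$ and $v_4$ when needed. The case analysis depends on whether $v_3v_4\in\delta(W)$ and whether $sv_3\in\delta(W)$. If $v_3,v_4$ lie on the same side, $S$ itself is a cut of $K_n$ and hence of $Y_n$. When the map fails, I will instead map using the complementary choice of $W$ or the transposition $v_3\leftrightarrow v_4$. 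Strictness again comes from size $n-2$: $Y_n$ has the cut $\delta(\{s\})-\{sv_3\}$ of size $n-2$, and $Z_n$ has none.

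I expect the main obstacle to be making the $Y_n$ injection rigorous, because a set may contain several cut families and the map must remain injective. A cleaner fallback is a counting formula: express $C_i$ via inclusion over the minimal cuts, which are $\delta(W)$ minus the missing edge. Then compare $Y_n$ and $Z_n$ termwise by pairing each $W$ with $v_3$ and $v_4$ separated against a $W$ with $s$ and $v_3$ separated. If that comparison gets messy, I would compare the three reliability polynomials via the complement-of-edge conditioning $R_{K_n-e}=R_{K_n}-\rho^{-1}(1-\rho)\,[R_{K_n}-R_{K_n/e\ldots}]$. Equivalently, I would note that $N_j(K_n-e)=N_j(K_n)-\#\{\text{pathsets of }K_n\text{ of size }j\text{ containing }e\text{ where }e\text{ is needed}\}$ and show that $v_3v_4$ is the least ``needed'' edge for every $j$.
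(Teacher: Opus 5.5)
\textbf{There is a genuine gap: the comparison with $Y_n$ is never proved, and it is the whole content of the theorem.}

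Your argument for $X_n$ is correct; it is the cutset form of Lemma~\ref{lemma:st}. Your strictness claims at size $n-2$ are also right.

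For $Y_n$, the swap handles more than you say. If $sv_3\in S$, then $S-sv_3+v_3v_4$ is a cut of $Y_n$, because $Y_n-(S-sv_3+v_3v_4)=Z_n-S$. These images contain $v_3v_4$, so they are disjoint from the identity images. What remains is to inject, size by size, one family into another. Let $E'=E(K_n)\setminus\{sv_3,v_3v_4\}$. The source is the family $\mathcal F$ of cuts $S\subseteq E'$ of $Z_n$ that are not cuts of $Y_n$. The target is the family $\mathcal G$ of cuts $S\subseteq E'$ of $Y_n$ that are not cuts of $Z_n$. Writing $D=E'\setminus S$:
\begin{itemize}
\item $S\in\mathcal F$ means $v_3\sim_D s$, $v_4\sim_D t$ and $s\not\sim_D t$;
\item $S\in\mathcal G$ means $v_3\sim_D t$, $s\not\sim_D t$ and $s\not\sim_D v_4$.
\end{itemize}

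Your proposed repairs fail on exactly this family. Every $S\in\mathcal F$ contains $tv_3$ and $sv_4$. Hence both $(s\,t)$ and $(v_3\,v_4)$ produce a set containing $sv_3\notin E(Y_n)$. Even setting that aside, they turn ``$v_3$ with $s$, $v_4$ with $t$'' into ``$v_3$ with $t$, $v_4$ with $s$''. There $v_3v_4$ still joins the two sides, so the image is not a cut of $Y_n$. $\mathcal F$ and $\mathcal G$ are different connection patterns, and no vertex relabeling will match them.

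The fallbacks are not carried out either:
\begin{itemize}
\item Inclusion--exclusion over minimal cuts gives alternating sums, so pairing terms proves no inequality.
\item Your deletion--contraction formula is wrong; the correct one is $R_{K_n}=(1-\rho)R_{K_n/e}+\rho R_{K_n-e}$. Even corrected, it only restates the problem as $R_{K_n/sv_3}\ge R_{K_n/v_3v_4}$.
\end{itemize}

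Here is a clean form of what must be shown. We have $N_i(K_n-f)=N_i(K_n)-\#\{B\ni f:\ |B|=i,\ s\sim_B t\}$. The relabeling $s\mapsto v_4$, $v_4\mapsto t$, $t\mapsto s$ of $K_n$ sends $sv_3$ to $v_3v_4$, and sends the event $s\sim t$ to $v_4\sim s$, which is the same as $s\sim v_3$ once $v_3v_4\in B$. Therefore
\[
N_i(Z_n)-N_i(Y_n)=\#\{B\ni v_3v_4:\ |B|=i,\ s\sim_B v_3\}-\#\{B\ni v_3v_4:\ |B|=i,\ s\sim_B t\}.
\]
So the missing step is this: among the $i$-edge subsets of $K_n$ containing $v_3v_4$, at least as many join $s$ to $v_3$ as join $s$ to $t$. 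This is intuitive but needs a genuine argument.

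You also cannot simply borrow the paper's route. The paper obtains this statement as the case $d\ge n-1$ of Theorem~\ref{theorem:almostcomplete}, with the same plan as yours: Lemma~\ref{lemma:st} for $X_n$ and an explicit injection for $Y_n$. In the bad case that injection replaces $v_3v_4$ by $sv_3$ and every $sv_j$ by $v_4v_j$. As written, this map is not injective. For $n\ge 6$, the $Y_n$-pathsets $\{sv_5,v_3v_5,v_3v_4,tv_4,v_4v_6\}$ and $\{sv_5,sv_6,v_3v_5,v_3v_4,tv_4\}$ both map to $\{sv_3,v_3v_5,v_4v_5,v_4v_6,tv_4\}$.
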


We remark that Bertrand et al.~\cite{Bertrand-2018} and Xie et al.~\cite{Xie-2021} determined the existence or nonexistence of UMRTTGs in some nonempty sets $T_{n,m}$ such that $n\leq 10$ as well, but these cases can be obtained computationally. A consequence of Theorems~\ref{theorem:bertrand},~\ref{theorem:Xie} and~\ref{theorem:almostcomplete1} is that the only cases not covered in previous research occur when $m \in \{3n-8,3n-7,3n-6\}$. Here it is proved that 
no UMRTTG exists in $T_{n,m}$ when $m=3n-8$ and $n\geq 11$ (see Proposition~\ref{prop:3n-8}). A discussion of the existence of $d$-UMRTTG in each nonempty set $T_{n,m}$ as a function of the distance $d$ is also given.

We close this section with a technical lemma from majorization theory~\cite{Marshall-2011} which will be convenient for our purposes.  Define $D_{n,m} = \{(x_1,\dots,x_n) \in \mathbb{Z}_{\geq 0}^n : \sum_{i=1}^n x_i = 2m\}$. A tuple $(x_1,\dots,x_n)$ in $D_{n,m}$ is \emph{fair} if $|x_i - x_j| \leq 1$ for all $i,j\in \{1,2,\ldots,n\}$. 
\begin{lemma}[Marshall et al.~\cite{Marshall-2011}]\label{lemma:Schur}
Let $f: D_{n,m} \to \mathbb{R}$ be the function $f(d_1,\dots,d_n) = \sum_{i=1}^n \phi(d_i)$, where $\phi:\mathbb{R}\to \mathbb{R}$ is a convex function. Then $f$ attains its minimum in $D_{n,m}$ precisely at fair tuples in $D_{n,m}$.
\end{lemma}
Observe that the set $D_{n,m}$ strictly includes all possible tuples consisting of degree sequences of simple graphs on $n$ vertices and $m$ edges, while the fair tuples in $D_{n,m}$ consist precisely of the degree sequences of almost-regular graphs.
% \begin{proof}[Sketch]
% Suppose $(d_1,\dots,d_n)$ is not fair. Then there exist $i,j$ with $d_i \ge d_j + 2$. Consider the pair $(d_i,d_j)$ and replace it by $(d_i-1,d_j+1)$, keeping the sum fixed.  

% By classical convexity of $\phi$ on $\mathbb{R}$, we have
% \[
% \phi(d_i-1)+\phi(d_j+1) \le \phi(d_i)+\phi(d_j),
% \]
% so $f$ does not increase. Iterating this process eventually leads to a fair tuple, which therefore attains the minimum.
% \end{proof}

\section{Existence of 1-UMRTTGs and 2-UMRTTGs}\label{section:1and2}
Here we will characterize all $1$-UMRTTGs and $2$-UMRTTGs. A key concept is that of $d$-irrelevant edges. 
\begin{definition}\label{definition:irrelevance}
Let $G$ be in $T_{n,m}$ and let $d$ be a positive integer. 
An edge $e$ in $G$ is \emph{$d$-irrelevant} if 
$e$ does not belong to any path of length at most $d$ joining its terminals.
\end{definition}

Lemma~\ref{lemma:irrelevant} follows from the previous concept of $d$-irrelevant edges and equation~\eqref{equation:poly}. 
\begin{lemma}\label{lemma:irrelevant}
Let $G$ be in $T_{n,m}$ and let $d$ be a positive integer. 
If an edge $e$ in $G$ is $d$-irrelevant then $R_{G-e}^d(\rho)=R_G^d(\rho)$ for every $\rho\in [0,1]$. 
\end{lemma}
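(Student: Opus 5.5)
The plan is to compare, for every $\rho\in[0,1]$, the events defining $R_G^d(\rho)$ and $R_{G-e}^d(\rho)$ on a common probability space, rather than expanding equation~\eqref{equation:poly} term by term. Note first that $G-e$ is not in $T_{n,m}$ but in $T_{n,m-1}$; its $d$-constrained reliability is still defined by the same edge-deletion model on its own $m-1$ edges, so the identity to prove is between two functions of $\rho$ only.

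The key combinatorial step is a bijection argument. Let $F\subseteq E(G)-\{e\}$ and consider the spanning subgraphs $G_F$ and $G_{F\cup\{e\}}$ of $G$ with edge sets $F$ and $F\cup\{e\}$. I claim that $G_{F\cup\{e\}}$ is a $d$-pathset of $G$ if and only if $G_F$ is.
\begin{itemize}
\item One direction is immediate, since adding an edge preserves any existing $s$--$t$ path.
\item Conversely, suppose $G_{F\cup\{e\}}$ contains an $s$--$t$ path $P$ of length at most $d$. Then $P$ is also a path of $G$, so by $d$-irrelevance $e\notin E(P)$. Hence $P\subseteq G_F$.
\end{itemize}
Moreover, $G_F$ is a $d$-pathset of $G$ exactly when it is a $d$-pathset of $G-e$, because both conditions refer to the same graph with the same terminals.

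Now I compute the reliability by conditioning on the state of $e$. Let $A$ be the random set of surviving edges among $E(G)-\{e\}$; its distribution coincides with that of the surviving edge set of $G-e$. Writing $R_G^d(\rho)$ as the sum of the two cases "$e$ survives" and "$e$ is deleted", we get
\[
R_G^d(\rho)=(1-\rho)\Pr[G_{A\cup\{e\}}\text{ is a }d\text{-pathset}]+\rho\Pr[G_A\text{ is a }d\text{-pathset}].
\]
By the claim, both probabilities equal $R_{G-e}^d(\rho)$, and therefore
\[
R_G^d(\rho)=(1-\rho)R_{G-e}^d(\rho)+\rho R_{G-e}^d(\rho)=R_{G-e}^d(\rho).
\]

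Equivalently, in terms of coefficients: since each $d$-pathset of $G-e$ with $i$ edges yields $d$-pathsets of $G$ with $i$ and $i+1$ edges, we have
\[
N_i^d(G)=N_i^d(G-e)+N_{i-1}^d(G-e).
\]
Multiplying $R_{G-e}^d(\rho)$ by $(1-\rho)+\rho=1$ reproduces equation~\eqref{equation:poly} for $G$.

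The only delicate point is the converse direction of the claim. There one must use that a path in a spanning subgraph of $G$ is a path in $G$ itself, so that the irrelevance hypothesis, which is stated for $G$, applies to it. The rest of the argument is routine.
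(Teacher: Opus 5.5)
Your proof is correct and is essentially the paper's argument. The paper uses $d$-irrelevance to get $N_i^d(G)=N_i^d(G-e)+N_{i-1}^d(G-e)$ and reindexes equation~\eqref{equation:poly} into $(1-\rho)R_{G-e}^d(\rho)+\rho R_{G-e}^d(\rho)$, which is your conditioning on the state of $e$ written at the level of coefficients; your explicit check of the converse direction (a short $s$--$t$ path in a spanning subgraph is a path of $G$, hence avoids $e$) spells out the step the paper states without elaboration.
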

\begin{proof}
Let $G$ be any two-terminal graph in $T_{n,m}$ and let $d$ be any positive integer. Since $e$ is $d$-irrelevant, it does not belong to any path of length at most $d$ joining the terminals of $G$. Hence, every $d$-pathset of $G$ is either a $d$-pathset of $G-e$, or it contains $e$ and becomes a $d$-pathset of $G-e$ after deleting $e$. 
Therefore, $N_i^d(G)=N_i^d(G-e)+N_{i-1}^d(G-e)$ for each $i\in \{1,2,\ldots,m\}$, with the convention $N_0^d(G-e)=0$. Substituting into~\eqref{equation:poly} and reindexing gives that, for every $\rho$ in $[0,1]$,
\begin{align*}
R_{G}^d(\rho) &= \sum_{i=1}^{m}N_i^d(G)(1-\rho)^i\rho^{m-i}   
    = \sum_{i=1}^{m}N_{i-1}^d(G-e)(1-\rho)^i\rho^{m-i} + 
      \sum_{i=1}^{m-1}N_i^d(G-e)(1-\rho)^i\rho^{m-i}\\
    &= (1-\rho)\sum_{i=1}^{m}N_{i-1}^d(G-e)(1-\rho)^{i-1}\rho^{(m-1)-(i-1)} + \rho \sum_{i=1}^{m-1}N_i^d(G-e)(1-\rho)^{i}\rho^{(m-1)-i}\\
     &= (1-\rho)R^d_{G-e}(\rho) + \rho R_{G-e}^d(\rho)=R^d_{G-e}(\rho). \qedhere
\end{align*} 
\end{proof}

A characterization of all $1$-UMRTTGs is a consequence of Lemma~\ref{lemma:irrelevant}. 

\begin{corollary}\label{cor:d1}
Let $n$ and $m$ be integers such that $n\geq 2$ and $1\leq m \leq \binom{n}{2}$. 
A two-terminal graph $G$ in $T_{n,m}$ is a $1$-UMRTTG if and only if $st$ is in $G$.    
\end{corollary}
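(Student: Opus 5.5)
For $d=1$, the only path of length at most $1$ joining the terminals is the edge $st$ itself. So a spanning subgraph of $G$ is a $1$-pathset exactly when it contains $st$. I would therefore compute $R_G^1(\rho)$ directly in the two cases $st\in E(G)$ and $st\notin E(G)$, and then compare.

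If $st\notin E(G)$, then $G$ has no $1$-pathset. Hence $N_i^1(G)=0$ for every $i$, and by equation~\eqref{equation:poly} we get $R_G^1(\rho)=0$ for all $\rho\in[0,1]$. If instead $st\in E(G)$, then every edge other than $st$ is $1$-irrelevant in the sense of Definition~\ref{definition:irrelevance}. Applying Lemma~\ref{lemma:irrelevant} repeatedly, deleting these $m-1$ edges one at a time, gives $R_G^1(\rho)=R_{G'}^1(\rho)$, where $G'$ is the two-terminal graph whose only edge is $st$. Thus $R_G^1(\rho)=1-\rho$. The same value also follows directly from the fact that $N_i^1(G)=\binom{m-1}{i-1}$. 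One small point: after the first deletion the intermediate graphs lie in classes $T_{n,m-k}$ rather than $T_{n,m}$. Lemma~\ref{lemma:irrelevant} is stated for arbitrary $m$, so this causes no problem.

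For sufficiency, suppose $st\in E(G)$. Every $H\in T_{n,m}$ then has either $R_H^1(\rho)=1-\rho$ or $R_H^1(\rho)=0$, so $R_G^1(\rho)=1-\rho\ge R_H^1(\rho)$ for every $\rho\in[0,1]$, and $G$ is a $1$-UMRTTG. For necessity, suppose $st\notin E(G)$. Since $m\ge1$, there exists $H\in T_{n,m}$ containing $st$: take $st$ together with any $m-1$ further edges of $K_n$. Then at $\rho=1/2$ we have $R_H^1(1/2)=1/2>0=R_G^1(1/2)$, so $G$ is not a $1$-UMRTTG.

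There is no real obstacle in this argument. The only points that need care are justifying the iterated use of Lemma~\ref{lemma:irrelevant}, as noted above, and checking that an $H$ containing $st$ exists in $T_{n,m}$, which uses $1\le m\le\binom{n}{2}$.
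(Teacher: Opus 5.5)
Your proposal is correct and follows the paper's proof. Every edge other than $st$ is $1$-irrelevant, so Lemma~\ref{lemma:irrelevant} reduces $R_G^1(\rho)$ to $1-\rho$ or $0$ according to whether $st\in E(G)$. Your explicit necessity step, exhibiting some $H\in T_{n,m}$ containing $st$ and comparing at $\rho=1/2$, fills in a point the paper leaves implicit in its closing remark ``as $1-\rho\geq 0$''.
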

\begin{proof}
Let $G$ be any two-terminal graph in $T_{n,m}$, and let $\rho$ be any real number in $[0,1]$. Any edge $e$ such that $e\neq st$ does not belong to a $1$-path joining the terminals, hence it is $1$-irrelevant. 
Then, by Lemma~\ref{lemma:irrelevant}, the edge $e$ can be deleted and the $1$-constrained two-terminal reliability is preserved. On the one hand, if $st$ is not in $G$ then we can remove all edges in $G$ and $R_G^1(\rho)=0$ for every $\rho$ in $[0,1]$. On the other hand, if $st$ is in $G$ 
then $R_G^1(\rho)$ equals the probability that $st$ does not fail, which is $1-\rho$. As $1-\rho\geq 0$, we conclude that $G$ is $1$-UMRTTG if and only if $st$ is in $G$, and the corollary follows.
\end{proof}

Lemma~\ref{lemma:st} roughly states that $st$ is the most relevant edge regardless of the choice of the distance constraint $d$. 
\begin{lemma}\label{lemma:st}
For each $G$ in $T_{n,m}$ such that $st$ is not in $G$, each edge $e$ in $G$, and each positive integer $d$, the two-terminal graph $G-e+st$ is $d$-stronger than $G$.
\end{lemma}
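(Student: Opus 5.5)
We construct an injection $\varphi$ from the $d$-pathsets of $G$ to the $d$-pathsets of $G'=G-e+st$ that preserves the number of edges and is not surjective. Identify the edge sets of $G$ and $G'$ via the bijection $\psi:E(G)\to E(G')$ that fixes every edge except $e$, which it sends to $st$. Let $A\subseteq E(G)$ be a $d$-pathset of $G$ with $i$ edges, and set $\varphi(A)=\psi(A)$.

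If $e\notin A$, then $\psi(A)=A$ is a spanning subgraph of $G'$. It still contains the same path of length at most $d$ joining $s$ and $t$, so it is a $d$-pathset of $G'$ with $i$ edges.

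If $e\in A$, then $\psi(A)=A-e+st$ contains the edge $st$. This is a path of length $1\le d$ joining the terminals, so $\psi(A)$ is again a $d$-pathset with $i$ edges.

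Since $\psi$ is a bijection on edge sets, $\varphi$ is injective and maps $i$-edge $d$-pathsets to $i$-edge $d$-pathsets. Hence $N_i^d(G')\ge N_i^d(G)$ for every $i\in\{1,2,\ldots,m\}$.

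For strict inequality, consider $i=1$. Because $st\notin G$, every path joining the terminals in $G$ has length at least $2$, so $N_1^d(G)=0$. On the other hand, $N_1^d(G')\ge 1$, since the single edge $st$ forms a $1$-edge $d$-pathset for every $d\ge1$. Thus $N_1^d(G')>N_1^d(G)$, and $G'$ is $d$-stronger than $G$.

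The argument is straightforward. The only step needing care is checking that $\varphi$ is well defined in the case $e\in A$. There the original path through $e$ may be destroyed, but this does not matter because $st$ itself supplies a path of length at most $d$.
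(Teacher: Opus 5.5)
Your proof is correct and follows the paper's argument: replacing $e$ by $st$ gives a size-preserving injection from the $d$-pathsets of $G$ into those of $G-e+st$, and strictness comes from comparing $N_1^d$. Your explicit split into the cases $e\in A$ and $e\notin A$ is slightly more careful than the paper's wording, which only describes the case where $e$ occurs in the pathset.
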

\begin{proof}
Let $G$ be any two-terminal graph in $T_{n,m}$ such that $st$ is not in $G$, and let $e$ be any edge in $G$. 
For every $d$-pathset of $G$, replacing the occurrence of $e$ by $st$ yields a spanning subgraph containing the edge $st$, and thus 
a $d$-pathset of $G-e+st$. As the previous correspondence of $d$-pathsets is one-to-one, there are at least as many $d$-pathsets in $G-e+st$ as $d$-pathsets in $G$ of each size 
thus $N_i^d(G-e+st)\geq N_i^d(G)$ for each $i\in \{1,2,\ldots,m\}$. 
Additionally, the single edge $st$ creates a $1$-pathset in $G-e+st$ that is not in $G$ and $N_1^d(G-e+st)>N_1^d(G)$. Then, 
$G-e+st$ is $d$-stronger than $G$, and the lemma follows.
% Let $G$ be in $T_{n,m}$ such that $st$ is not in $G$ and let $e$ be an edge in $G$. As $st$ is not in $G$ but $st$ is in $G-e+st$, 
% $N_1^d(G)=0$ and $N_1^d(G-e+st)=1$ thus $N_1^d(G-e+st)>N_1^d(G)$. 
% Let $i$ be in $\{1,2,\ldots,m\}$. 
% By Remark~\ref{remark:injectivity}, it suffices to construct an injective function $\varphi_i:\mathcal{P}_i^d(G) \to \mathcal{P}_i^d(G-e+st)$. 
% Let $H$ be in $\mathcal{P}_i^d(G)$. If $e$ is not in $H$ then we define $\varphi_i(H)=H$; otherwise, we define $\varphi_i(H)=H-e+st$. First, we will prove that $\varphi_i(\mathcal{P}_i^d(G)) \subseteq \mathcal{P}_i^d(G-e+st)$. 
% On the one hand, if $H$ is in $\mathcal{P}_i^d(G)$ and $e\notin H$ 
% then $H$ is already a $d$-pathset of $G-e+st$ composed of $i$ edges and $\varphi_i(H)=H$ belongs to $\mathcal{P}_i^d(G-e+st)$. On the other hand, if $H$ is in $\mathcal{P}_i^d(G)$ and $e\in H$ then  $\varphi(H)=H-e+st$ is a $d$-pathset of $G-e+st$ composed of $i$ edges and $\varphi_i(H)=H-e+st$ 
% belongs to $\mathcal{P}_i^d(G-e+st)$. Now, we  prove that $\varphi_i$ 
% is injective. Define $\psi_i:\varphi_i(\mathcal{P}_i^d(G)) \to \mathcal{P}_i^d(G)$ as follows. If $st\notin H$ then $\psi_i(H)=H$; 
% otherwise, we define $\psi_i(H)=H-st+e$. By its definition it holds 
% that $\varphi_i(\psi_i(H))=H$ and $\psi_i(\varphi_i(H))=H$. Therefore, $\psi_i$ is a bijective function thus $\varphi_i$ is bijective when restricted to its codomain, or injective, as required. 
\end{proof}

\begin{lemma}\label{lemma:An0}
For each pair of 
integers $n$ and $d$ such that $n\geq 3$ and $d\geq 2$, the $d$-constrained two-terminal unreliability $U_{A_{n,0}}^d(\rho)$ can be obtained as follows,
\begin{equation}\label{eq:An0}
U_{A_{n,0}}^{d}(\rho) = \rho (1-(1-\rho)^2)^{n-2}.  
\end{equation}
\end{lemma}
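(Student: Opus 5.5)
A natural route is to identify all $s$--$t$ paths in $A_{n,0}$ and check that the distance constraint $d\geq 2$ removes none of them. By definition, $A_{n,0}$ has edge set $\{st\}\cup\{sv_i,v_it : i\in\{3,4,\ldots,n\}\}$, and there are no edges among the vertices $v_3,\ldots,v_n$. Each $v_i$ has degree $2$, with neighbours $s$ and $t$. Therefore every path joining $s$ and $t$ is either the edge $st$ or a path $s v_i t$ of length $2$: a path entering some $v_i$ from $s$ must leave it toward $t$. All these paths have length at most $2\leq d$, so the $d$-constrained reliability coincides with the ordinary two-terminal reliability. In particular, a spanning subgraph is a $d$-pathset exactly when it contains $st$, or contains both $sv_i$ and $v_it$ for some $i$.

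Next, compute the probability of the complementary event, namely that the surviving subgraph is not a $d$-pathset. This happens exactly when $st$ fails and, for every $i\in\{3,\ldots,n\}$, at least one of $sv_i$ and $v_it$ fails. The edge sets $\{st\}$ and $\{sv_i,v_it\}$ for $i=3,\ldots,n$ are pairwise disjoint, and edges fail independently. Hence the probability factorizes:
\begin{itemize}
\item the edge $st$ fails with probability $\rho$;
\item for each $i$, the path $sv_it$ is destroyed with probability $1-(1-\rho)^2$;
\item there are $n-2$ such indices $i$.
\end{itemize}
Multiplying gives $U_{A_{n,0}}^d(\rho)=\rho\,(1-(1-\rho)^2)^{n-2}$, which is \eqref{eq:An0}.

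There is no real obstacle here. The only point that needs justification is the claim that no longer paths exist, since a longer path could be cut off by a small $d$ and break the equivalence. This follows immediately from $\hat{A}_{n,0}$ being edgeless, so the hypothesis $d\geq 2$ is exactly what makes all paths admissible. As a cross-check, setting $r=0$ in \eqref{eq:Anr} should give $1-U$. That identity requires expanding $(2-\rho)^{n-3}$ against $(1-(1-\rho)^2)=\rho(2-\rho)$, which I would verify only as a sanity check.
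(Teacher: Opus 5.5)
Your proposal is correct and follows the paper's proof: you decompose $A_{n,0}$ into $st$ plus $n-2$ edge-disjoint $2$-paths, and you multiply the probabilities that all of them fail using independence. Your explicit check that no longer $s$--$t$ paths exist, together with the $r=0$ sanity check against \eqref{eq:Anr} (which gives $\rho^{n-1}(2-\rho)^{n-2}$ on both sides), is a welcome addition that the paper leaves implicit.
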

\begin{proof}
Recall that $A_{n,0}$ has vertex set $\{s,t\}\cup \{v_3,v_4,\ldots,v_n\}$ and edge set $\{st\}\cup \{sv_i,v_it, i\in \{3,4,\ldots,n\}\}$. Consequently, $A_{n,0}$ consists of $st$ together with $n-2$ edge-disjoint paths of length $2$ joining its terminals. As $d\geq 2$, 
there exists no $d$-path joining $s$ and $t$ if and only if $st$ fails and, simultaneously, each of the $n-2$ paths of length $2$ fails. The probability that $st$ fails equals $\rho$. The probability that 
a single path of length $2$ fails 
equals $1-(1-\rho)^2$. As the edge failures are independent, the $d$-constrained unreliability of $A_{n,0}$ equals $\rho(1-(1-\rho)^2)^{n-2}$ yielding equation~\eqref{eq:An0}. 
\end{proof}

We are in position to characterize all $2$-UMRTTGs.
\begin{theorem}\label{theorem:2UMRTTG}
Let $n$ and $m$ be integers such that $n\geq 3$ and $3\leq m \leq \binom{n}{2}$. A two-terminal graph $G$ in $T_{n,m}$ is a $2$-UMRTTG if and only if $G$ includes $A_{n',0}$ as a subgraph, 
where $n' = \min\{\lfloor \frac{m+3}{2}\rfloor,n\}$.
\end{theorem}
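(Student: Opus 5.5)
We reduce every two-terminal graph to its $2$-relevant part and then compare unreliabilities directly. Fix $G$ in $T_{n,m}$. By Definition~\ref{definition:irrelevance}, an edge is $2$-relevant only if it is $st$ or lies on a path $s\,v\,t$ with $v\notin\{s,t\}$. Let $k$ be the number of vertices $v$ adjacent to both terminals. Applying Lemma~\ref{lemma:irrelevant} repeatedly, we delete all $2$-irrelevant edges without changing $R_G^2$. What remains is either $A_{k+2,0}$ (if $st\in G$) or $A_{k+2,0}-st$, together with isolated vertices. With the convention $U_{A_{2,0}}^2=\rho$, Lemma~\ref{lemma:An0} and the same independence argument give
\begin{equation*}
U_G^2(\rho)=\rho\,(1-(1-\rho)^2)^{k}\quad\text{if } st\in G,\qquad U_G^2(\rho)=(1-(1-\rho)^2)^{k}\quad\text{otherwise}.
\end{equation*}
Write $q(\rho)=1-(1-\rho)^2\in[0,1]$. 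Both expressions are nonincreasing in $k$, and the first is at most the second.

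Next we bound $k$. The $k$ common neighbours use $2k$ edges, and $st$ uses one more, so $2k+1\le m$ when $st\in G$. Also $k\le n-2$ in every case. Hence, when $st\in G$, we get $k\le \min\{\lfloor (m-1)/2\rfloor, n-2\}=n'-2$, since $\lfloor (m+3)/2\rfloor-2=\lfloor (m-1)/2\rfloor$. This bound is attainable. Because $m\ge 3$, the graph $A_{n',0}$ has $2(n'-2)+1\le m$ edges and $n'\le n$ vertices. Because $m\le\binom n2$, we can add the remaining edges arbitrarily to obtain a member of $T_{n,m}$ containing $A_{n',0}$. Such a graph $G^\ast$ satisfies $U_{G^\ast}^2(\rho)=\rho\, q^{n'-2}$.

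Sufficiency now follows. Take any $H$ in $T_{n,m}$ with parameter $k_H$.
\begin{itemize}
\item If $st\in H$, then $k_H\le n'-2$, so $\rho\, q^{k_H}\ge\rho\, q^{n'-2}$ because $q\le1$.
\item If $st\notin H$, then $q^{k_H}\ge q^{n-2}\ge \rho\, q^{n-2}\ge\rho\, q^{n'-2}$.
\end{itemize}
In both cases $U_H^2\ge U_{G^\ast}^2$ on $[0,1]$. Note that any $G$ containing $A_{n',0}$ has $st\in G$ and $k\ge n'-2$, so $k=n'-2$ and its unreliability equals that of $G^\ast$.

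For necessity, suppose $G$ is a $2$-UMRTTG. Then $U_G^2\le U_{G^\ast}^2$ on $[0,1]$. Suppose first that $st\notin G$. As $\rho\to0$ we have $q\sim 2\rho$, so $U_G^2\sim(2\rho)^{k}$ while $U_{G^\ast}^2\sim 2^{n'-2}\rho^{n'-1}$. The inequality would force $k\ge n'-1$ with $k\le n-2$; this alone is not a contradiction when $n'<n$. The clean argument instead compares the two functions at a single point $\rho_0$ near $1$. There $q\approx 1$, so $U_G^2\approx 1$ while $U_{G^\ast}^2\approx\rho_0<1$, a contradiction. We can also simply invoke Lemma~\ref{lemma:st}: $G-e+st$ is $2$-stronger than $G$, and Remark~\ref{remark:local} then gives $R^2_{G-e+st}>R^2_G$ near $\rho=1$.

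So $st\in G$, and $U_G^2=\rho\, q^{k}$ with $k\le n'-2$. If $k<n'-2$, then $\rho\, q^{k}>\rho\, q^{n'-2}$ for $\rho\in(0,1)$, since $0<q<1$ there. This contradicts optimality, so $k=n'-2$. Therefore $G$ contains $st$ and $n'-2$ common neighbours of $s$ and $t$, that is, $G$ contains $A_{n',0}$.

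The main obstacle is the reduction step: we must check that deleting $2$-irrelevant edges leaves exactly this structure, and that the counting bound $k\le\lfloor (m-1)/2\rfloor$ matches $n'$ correctly. The case $m$ even deserves particular care there.
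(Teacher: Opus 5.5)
Your strategy is the paper's: strip $2$-irrelevant edges via Lemma~\ref{lemma:irrelevant}, reduce to $A_{k+2,0}$, bound $k$ by an edge count, and compare using Lemma~\ref{lemma:An0}. Your necessity argument is fine. However, the sufficiency step for $st\notin H$ is wrong as written.

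In the chain $q^{k_H}\ge q^{n-2}\ge \rho\, q^{n-2}\ge\rho\, q^{n'-2}$, the last inequality is reversed. Since $n'\le n$ and $0\le q\le 1$, one has $q^{n-2}\le q^{n'-2}$. So whenever $n'<n$ (the sparse regime) and $\rho\in(0,1)$, the claimed inequality is false. For example, $n=10$ and $m=5$ give $n'=4$, and the chain would assert $\rho\, q^{8}\ge\rho\, q^{2}$.

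The bound $k_H\le n-2$ is simply too weak here, and monotonicity in $k$ cannot rescue it. For even $m\le 2n-4$, a graph without $st$ can have $k_H=m/2=n'-1$ common neighbours of the terminals, one more than $G^\ast$. Your remark that ``the first expression is at most the second'' only compares equal values of $k$.

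Either of two repairs closes the gap.
\begin{enumerate}
\item Use the edge count for $H$ as well. From $2k_H\le m$ you get $k_H\le\min\{\lfloor m/2\rfloor,n-2\}\le n'-1$. Then $q^{k_H}\ge q^{n'-1}=q\cdot q^{n'-2}\ge \rho\, q^{n'-2}$, because $q=\rho(2-\rho)\ge\rho$. This last inequality is precisely the trade of one $2$-path for the edge $st$.
\item Do what the paper does, and what you already do in your necessity part. By Lemma~\ref{lemma:st}, $R_H^2\le R_{H-e+st}^2$ pointwise on $[0,1]$, so the case $st\notin H$ reduces to the case $st\in H$ you have already handled.
\end{enumerate}
With either repair the proof is complete.

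A minor point: your heuristic at $\rho_0$ near $1$ is loose as stated, since both $U_G^2$ and $U_{G^\ast}^2$ tend to $1$. What actually works is $U_{G^\ast}^2\le\rho_0$ together with $U_G^2\ge q^{n-2}\ge 1-(n-2)(1-\rho_0)^2$. In any case, your alternative via Lemma~\ref{lemma:st} and Remark~\ref{remark:local} already settles that step rigorously.
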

\begin{proof}
Let $K_n$ be the complete graph equipped with terminals $s$ and $t$. 
Every edge of $K_n$ not in a $1$-path or $2$-path joining its terminals is $2$-irrelevant. Hence, by Lemma~\ref{lemma:irrelevant}, $R_{K_n}^2(\rho)=R_{A_{n,0}}^2(\rho)$ for every $\rho\in [0,1]$.

If $m\geq 2n-3$ and $G\in T_{n,m}$ then $R_G^2(\rho)\leq R_{A_{n,0}}^2(\rho)$, 
with equality if and only if $G$ contains $A_{n,0}$ as a subgraph. 
Therefore, $G$ is $2$-UMRTTG if and only if $G$ includes $A_{n,0}$ as a subgraph. 

Assume now that $m<2n-3$. Let $G$ be in $T_{n,m}$ satisfying the conditions of the statement. As $G$ contains $A_{n',0}$ as a subgraph 
where $n'=\lfloor (m+3)/2\rfloor$, every edge outside this subgraph is $2$-irrelevant and $U_G^2(\rho)=U_{A_{n',0}}^2(\rho)$. 
Let $H$ be any two-terminal graph in $T_{n,m}$. By Lemma~\ref{lemma:st}, we may assume without loss of generality that $st\in E(H)$. Deleting all $2$-irrelevant edges from $H$ yields a two-terminal graph isomorphic to $A_{\ell,0}$ for some positive integer $\ell$ and $U_H^2(\rho)=U_{A_{\ell,0}}^2(\rho)$. 
As $A_{\ell,0}$ has $2\ell-3$ edges and $H$ has $m$ edges, it follows that 
$2\ell-3\leq m$ thus $\ell\leq n'$. 
By Lemma~\ref{lemma:An0}, we can find 
$U_{A_{n',0}}^2(\rho)$ as well as $U_{A_{\ell,0}}^2(\rho)$ thus 
\begin{equation*}
U_G^2(\rho)=U_{A_{n',0}}^2(\rho)=\rho(1-(1-\rho)^2)^{n'-2} \leq \rho(1-(1-\rho)^2)^{\ell-2}=U_{A_{\ell,0}}^2(\rho)=U_H^2(\rho),    
\end{equation*}
and equality holds if and only if $\ell=n'$. 
In either case, $G$ is a $2$-UMRTTG if and only if it contains $A_{n',0}$ as a subgraph where $n' = \min\{\lfloor \frac{m+3}{2}\rfloor,n\}$, as required.
\end{proof}

\section{Structural properties of 3-UMRTTGs}\label{section:3}
In this section, we derive structural restrictions satisfied by all $3$-UMRTTGs. 
The structure of $3$-UMRTTGs turns out to depend on the density of the two-terminal graphs. In classes of sparse two-terminal graphs $T_{n,m}$ in which $m\leq 2n-3$, we will see that either $H_{n,m}$ is the only $3$-UMRTTG in $T_{n,m}$ or no $3$-UMRTTG exists. In classes of two-terminal graphs $T_{n,m}$ with more density in which $m> 2n-3$, the terminals of any $3$-UMRTTG $G$ are universal and 
$\hat{G}$ must satisfy regularity or connectivity conditions. We close this section with an exact characterization of the only $d$-UMRTTG in  $T_{n,\binom{n}{2}-1}$ when $d\geq 3$ 
which generalizes Theorem~\ref{theorem:almostcomplete1}. 

\begin{lemma}\label{lemma:3-LMRTTG-near1-low}
Let $n$ and $m$ be integers such that $n\geq 5$ and $5\leq m \leq 2n-3$. The only $3$-LMRTTG near $1$ in $T_{n,m}$ is $H_{n,m}$.
\end{lemma}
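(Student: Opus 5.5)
We run the filtration of Lemma~\ref{lemma:existenceLMRTTG} with $d=3$, namely $T_{n,m}^3(1)\supseteq T_{n,m}^3(2)\supseteq T_{n,m}^3(3)\supseteq\cdots$, and show that it already collapses to $\{H_{n,m}\}$ after the third stage. Since $T_{n,m}^3(m)\subseteq T_{n,m}^3(3)$ and $T_{n,m}^3(m)$ is nonempty, this identifies $H_{n,m}$ as the only $3$-LMRTTG near $1$.

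\textbf{Stage 1.} A $3$-pathset with one edge must be the edge $st$. Hence $N_1^3(G)=1$ if $st\in E(G)$ and $N_1^3(G)=0$ otherwise. Since $m\geq 1$, the graph $H_{n,m}$ contains $st$, so $T_{n,m}^3(2)$ consists exactly of the graphs containing $st$.

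\textbf{Stage 2.} For such $G$, a $2$-edge $3$-pathset either contains $st$ together with any other edge, giving $m-1$ sets, or it is a path $s\,v\,t$ of length $2$. Thus $N_2^3(G)=(m-1)+p$, where $p$ is the number of common neighbours of $s$ and $t$. We must maximize $p$ subject to $st\in E(G)$ and $m$ edges. Each common neighbour uses two distinct edges, so $p\leq\lfloor (m-1)/2\rfloor$. This bound is attainable because $m\leq 2n-3$ gives $\lfloor (m-1)/2\rfloor\leq n-2$. So $T_{n,m}^3(3)$ consists of the graphs with $st$ and exactly $k=\lfloor (m-1)/2\rfloor$ common neighbours $v_3,\ldots,v_{k+2}$. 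These use $2k+1$ edges, which leaves no free edge if $m$ is odd and exactly one free edge $e$ if $m$ is even.

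For odd $m$ we are done: the graph is forced to be $H_{n,m}$ up to isomorphism (the remaining vertices are isolated), so $T_{n,m}^3(3)=\{H_{n,m}\}$.

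\textbf{Stage 3, even $m$.} We compute $N_3^3(G)$ as a function of the position of the single extra edge $e$ and show that $e=v_3v_4$ is the unique maximizer up to isomorphism. The possible positions are:
\begin{enumerate}[label=(\alph*)]
\item $e$ joins two common neighbours, $v_iv_j$;
\item $e$ joins $s$ or $t$ to a fresh vertex $w$;
\item $e$ joins a common neighbour $v_i$ to a fresh vertex $w$;
\item $e$ joins two fresh vertices.
\end{enumerate}

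We count $3$-edge $3$-pathsets using a common baseline. Sets containing $st$ number $\binom{m-1}{2}$. Sets without $st$ that contain some path $s v_i t$ are counted by inclusion–exclusion over the $k$ two-paths; this count depends only on $k$ and $m$, and so is identical in every case.

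The only remaining $3$-edge sets without $st$ and without a $2$-path between the terminals are exactly the edge sets of $3$-paths from $s$ to $t$. In case (a) there are two such paths, $s v_i v_j t$ and $s v_j v_i t$. In cases (b), (c) and (d) there are none, since $w$ has degree $1$ or $e$ avoids every $s$–$t$ path.

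So case (a) yields strictly larger $N_3^3$ by exactly $2$. Every graph of type (a) is isomorphic to $H_{n,m}$; this requires $k\geq 2$, which holds because $m\geq 6$ when $m$ is even. Hence $T_{n,m}^3(4)=\{H_{n,m}\}$.

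The main obstacle is confirming that the inclusion–exclusion count of $3$-sets containing a $2$-path is truly independent of where $e$ sits. It is, because such a set is a $2$-path plus any other edge, and the edge $e$ enters these counts only as a generic extra edge. With that checked, Lemma~\ref{lemma:existenceLMRTTG} gives that $H_{n,m}$ is the only $3$-LMRTTG near $1$.
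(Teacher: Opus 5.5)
Your proof is correct and follows essentially the same route as the paper: you filter successively by $N_1^3$, $N_2^3$ and $N_3^3$, settle odd $m$ at the second stage, and for even $m$ observe that the single leftover edge creates $3$-paths (exactly two) only when it joins two common neighbours; your explicit check that every other contribution to $N_3^3$ is independent of that edge fills in detail the paper leaves implicit. Your stage labels are shifted by one relative to the paper's proof (your $T_{n,m}^3(2)$ is the paper's $T_{n,m}^3(1)$, and your opening inclusion should read $T_{n,m}^3(m)\subseteq T_{n,m}^3(4)$ to match your even-$m$ conclusion), but this is purely cosmetic.
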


\begin{proof}
Let $n$ and $m$ be integers as in the statement. By Lemma~\ref{lemma:existenceLMRTTG}, it suffices to prove that
$T_{n,m}^3(m)=\{H_{n,m}\}$. The set $T_{n,m}^3(1)$ consists of all two-terminal graphs in $T_{n,m}$ containing the edge $st$. The set $T_{n,m}^3(2)$ consists of all two-terminal graphs $G$ containing $st$ together with
$\lfloor \frac{m-1}{2}\rfloor$ edge-disjoint $2$-paths joining its  terminals. If $m$ is odd, then $G$ is isomorphic to $H_{n,m}$ and therefore $T_{n,m}^{3}(2)=\{H_{n,m}\}$. As $T_{n,m}^{3}(m)\subseteq T_{n,m}^{3}(2)$ and $T_{n,m}^{3}(m)$ is nonempty, $T_{n,m}^{3}(m)=\{H_{n,m}\}$. 
If $m$ is even, then there exists an additional edge $e$ in $G$ that is distinct from $st$ and does not belong to any $2$-path joining $s$ and $t$. Observe that the additional edge $e$ creates $3$-paths if and only 
if its endpoints are common neighbors of both terminals $s$ and $t$, a $G$ is isomorphic to $H_{n,m}$. Therefore, 
$T_{n,m}^3(3)=\{H_{n,m}\}$, and again, $T_{n,m}^{3}(m)=\{H_{n,m}\}$. 
In either case $T_{n,m}^3(m)=\{H_{n,m}\}$, and the lemma follows.
\end{proof}

Now we study the existence of $3$-LMRTTGs near $1$ in $T_{n,m}$ when $n\geq 5$ and $2n-3 < m \leq \binom{n}{2}$.
Let $G$ be in $T_{n,m}$. Observe that $G$ belongs to $T_{n,m}^3(2)$ if and only if $st\in E(G)$ and every $2$-path joining $s$ and $t$ is contained in $G$, that is, if and only if $G\in T_{n,m}^u$. Assume now that $G\in T_{n,m}^u$. We derive an expression for $N_i^3(G)$ for each $i\in 
\{3,4,\ldots,m\}$ in terms of the presence or absence of paths joining $s$ and $t$. The main tool is the inclusion-exclusion principle.

Let $v$ be any vertex in $\hat{G}$ and let $xy$ be any edge in $\hat{G}$. Define the collections of edge sets $\mathcal{A}_0$, $\mathcal{A}_v$, $\mathcal{A}_{xy}^x$, and $\mathcal{A}_{xy}^y$ by
\begin{align*}
\mathcal{A}_0 &=\{st\},\\
\mathcal{A}_v &= \{sv,vt\},\\
\mathcal{A}_{xy}^x &= \{sx,xy,yt\},\\
\mathcal{A}_{xy}^y &= \{sy,xy,xt\}.
\end{align*}

A spanning subgraph $H$ of $G$ is a $3$-pathset if and only if it contains the edge $st$, a $2$-path joining the terminals, or a $3$-path joining them. 
The following notation isolates these three possibilities and will allow us to apply the inclusion-exclusion principle. For each $j\in \{1,2,3\}$, define $N_{i,j}(G)$ as the number of spanning subgraphs $H$ of $G$ with precisely $i$ edges such that the distance between $s$ and $t$ in $H$ is precisely $j$, namely,
\begin{equation*}
N_{i,j}(G)=|\{H :H\subseteq G, V(H)=V(G), |E(H)|=i, d_H(s,t)=j\}|.
\end{equation*}

By the sum rule, $N_i^3(G)=N_{i,1}(G)+N_{i,2}(G)+N_{i,3}(G)$. 
By definition, $N_{i,1}(G)$ is the number of spanning subgraphs $H$ of $G$ containing the edge $st$ together with $i-1$ of the remaining $m-1$ edges of $G$, that is,
\begin{equation*}
N_{i,1}(G)=\binom{m-1}{i-1}.
\end{equation*}

Next, $N_{i,2}(G)$ counts all spanning subgraphs of $G$ that do not contain the edge $st$ but contain both edges $sv$ and $vt$ for some vertex $v\in V(\hat{G})$. For each of the $\binom{n-2}{k}$ collections of $k$ edge-disjoint paths of length $2$ joining $s$ and $t$, there are exactly $\binom{m-1-2k}{i-2k}$ ways to choose the additional $i-2k$ edges among the $m-1-2k$ edges of $G$ that are neither $st$ nor part of the chosen paths. Therefore, the inclusion--exclusion principle yields
\begin{equation*}
N_{i,2}(G) = \sum_{k=1}^{\lfloor \frac{i}{2} \rfloor} (-1)^{k+1}\binom{n-2}{k}\binom{m-1-2k}{i-2k}.
\end{equation*}

\begin{remark}\label{remark:choice}
Let $G\in T_{n,m}^u$ and let $i\in \{1,2,\ldots,m\}$. The quantities $N_{i,1}(G)$ and $N_{i,2}(G)$ do not depend on the choice of $G$.
\end{remark}

We now derive an expression for $N_{i,3}(G)$ when $G\in T_{n,m}^u$. Let $\mathcal{A}$ be the set of all spanning subgraphs $H$ of $G$ such that neither $\mathcal{A}_0 \subseteq E(H)$ nor $\mathcal{A}_v \subseteq E(H)$ for any $v\in V(\hat{G})$. Then a spanning subgraph $H$ is in $\mathcal{A}$ if and only if $d_H(s,t)\geq 3$. 
To ensure that $d_H(s,t)=3$, at least one of the sets $\mathcal{A}_{xy}^x$ or $\mathcal{A}_{xy}^y$ must be contained in $H$ for some edge $xy\in E(\hat{G})$. Define the collection of sets $\mathcal{Y}=\{\mathcal{A}_{xy}^x, \mathcal{A}_{xy}^y: xy\in \hat{G}\}$. 
For each subset $\mathcal{X}\subseteq \mathcal{Y}$, let $N_G(\mathcal{A},\mathcal{X})$ be the number of spanning subgraphs $H$ of $G$ belonging to $\mathcal{A}$ and containing every edge set in $\mathcal{X}$. Another application of the inclusion-exclusion principle gives
\begin{equation}\label{eq:incexc3}
N_{i,3}(G)= \sum_{\mathcal{X}\neq \emptyset: \mathcal{X}\subseteq \mathcal{Y}}(-1)^{|\mathcal{X}|+1}N_G(\mathcal{A},\mathcal{X}).
\end{equation}

Lemma~\ref{lemma:3-LMRTTG-near1-high} provides necessary conditions for a two-terminal graph $G$ in $T_{n,m}$ to be $3$-LMRTTG near $1$ when $n\geq 5$ and $m>2n-3$.

\begin{lemma}\label{lemma:3-LMRTTG-near1-high}
Let $n$ and $m$ be integers such that $n\geq 5$ and 
$m > 2n-3$. If $G$ is a $3$-LMRTTG near $1$ in 
$T_{n,m}$, then $G$ belongs to $T_{n,m}^u$ and 
$\hat{G}$ is an almost-regular graph. 
\end{lemma}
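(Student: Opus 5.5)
The plan is to run the construction of Lemma~\ref{lemma:existenceLMRTTG} for $d=3$ far enough to force almost-regularity. Since $T_{n,m}^3(m)\subseteq T_{n,m}^3(k)$ for every $k$, it suffices to show that $T_{n,m}^3(5)$ consists exactly of the two-terminal graphs $G\in T_{n,m}^u$ such that $\hat G$ is almost-regular. As observed right before the statement, $T_{n,m}^3(2)=T_{n,m}^u$ when $m>2n-3$: the edge $st$ and all $n-2$ paths of length $2$ fit within the edge budget. So from stage $2$ on we work inside $T_{n,m}^u$. By Remark~\ref{remark:choice}, $N_{i,1}$ and $N_{i,2}$ are the same for all members. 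Hence only $N_{i,3}(G)$ matters, computed via \eqref{eq:incexc3}. Throughout, let $e=|E(\hat G)|=m-2n+3$, which is fixed, and let $d_v$ denote the degree of $v$ in $\hat G$.

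\emph{Stages $3$ and $4$.} A $3$-edge subgraph at distance exactly $3$ is precisely one of the sets $\mathcal{A}^x_{xy}$, $\mathcal{A}^y_{xy}$, so $N_{3,3}(G)=2e$, which is constant.

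For $i=4$, two distinct sets in $\mathcal{Y}$ share at most one edge. Hence their union has at least $5$ edges, and only singletons $\mathcal{X}$ contribute. For $\mathcal{A}^x_{xy}$ the extra edge must avoid $st$, $sy$ and $xt$, since $sy$ or $xt$ would create a $2$-path. This gives $N_{4,3}(G)=2e(m-6)$, again constant. Therefore $T^3_{n,m}(4)=T^u_{n,m}$.

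\emph{Stage $5$.} Singleton terms count a fixed $3$-path plus two of the $m-6$ allowed edges, minus the $n-4$ choices in which the two extras form a $2$-path $sv,vt$ with $v\notin\{x,y\}$. This gives $2e\bigl(\binom{m-6}{2}-(n-4)\bigr)$, which is constant.

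Pair terms need two sets in $\mathcal{Y}$ whose union has $5$ edges and contains no $st$ and no $2$-path. The pair $\mathcal{A}^x_{xy},\mathcal{A}^y_{xy}$ is excluded, because its union contains $sx,xt$. The valid pairs are the $3$-paths $s$--$x$--$y$--$t$ and $s$--$x$--$z$--$t$ sharing $sx$, and symmetrically those sharing the last edge into $t$. There are $2\sum_{v}\binom{d_v}{2}$ such pairs. I would check that no triple of sets fits into $5$ edges, so higher terms vanish.

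Thus
\[
N_{5,3}(G)=C_{n,m}-2\sum_{v\in V(\hat G)}\binom{d_v}{2},
\]
where $C_{n,m}$ depends only on $n$ and $m$. Maximizing $N_5^3$ over $T^u_{n,m}$ therefore amounts to minimizing $\sum_v\phi(d_v)$ with $\phi(x)=x(x-1)/2$ convex, over degree sequences with fixed sum $2e$. By Lemma~\ref{lemma:Schur}, fair tuples are exactly the minimizers over $D_{n-2,e}$. Fair tuples are realized by almost-regular graphs on $n-2$ vertices with $e$ edges, and such graphs exist. Hence the maximizers in $T^u_{n,m}$ are exactly those $G$ with $\hat G$ almost-regular, and the lemma follows.

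The main obstacle is the inclusion–exclusion bookkeeping at $i=5$. One must verify that every singleton correction, such as excluded edges and extras forming $2$-paths, is independent of $G$. One must also check that the pair term is exactly $2\sum_v\binom{d_v}{2}$, with no degree-dependent contributions from the excluded configurations or from triples. I would double-check this on a small instance, e.g.\ $n=5$, comparing two graphs $\hat G$ with different degree sequences.
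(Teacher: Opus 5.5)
Your proposal is correct and follows the paper's proof essentially step for step. It uses the same reduction to $T^3_{n,m}(5)$, the same values $N_{3,3}=2e$, $N_{4,3}=2e(m-6)$ and $N_{5,3}=2e\bigl(\binom{m-6}{2}-(n-4)\bigr)-2p_3(\hat G)$, and the same Schur-convexity conclusion. The triple check you deferred is immediate: any two sets in $\mathcal{Y}$ share at most one edge, so any three of them span at least $6$ edges.
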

\begin{proof}
Let $n$ and $m$ be integers such that $n\geq 5$ and $m>2n-3$. By Lemma~\ref{lemma:existenceLMRTTG}, the set of all $3$-LMRTTGs near $1$ in $T_{n,m}$ is precisely $T_{n,m}^3(m)$. Since $T_{n,m}^3(5)\subseteq T_{n,m}^3(m)$, it suffices to prove that every graph $G\in T_{n,m}^3(5)$ belongs to $T_{n,m}^u$ and satisfies that $\hat{G}$ is an almost-regular graph.

Let $G\in T_{n,m}$. By definition, $G\in T_{n,m}^3(1)$ if and only if $st\in E(G)$, and $G\in T_{n,m}^3(2)$ if and only if $G$ contains $st$ together with every path of length $2$ joining $s$ and $t$. Hence, 
$T_{n,m}^3(2)=T_{n,m}^u$. We will show that $
T_{n,m}^3(3)=T_{n,m}^3(4)=T_{n,m}^u$, and finally that $T_{n,m}^3(5)$ consists precisely of those two-terminal graphs $G\in T_{n,m}^u$ such that $\hat{G}$ is an almost-regular graph. Let $G\in T_{n,m}^u$ and let $i\in \{3,4,\ldots,m\}$. By Remark~\ref{remark:choice}, the quantities $N_{i,1}(G)$ and $N_{i,2}(G)$ do not depend on the choice of $G$. Since $N_i^3(G)=N_{i,1}(G)+N_{i,2}(G)+N_{i,3}(G)$, it follows that $T_{n,m}^3(i+1)$ consists of all two-terminal graphs that maximize $N_{i,3}(G)$ among all graphs in $T_{n,m}^3(i)$. Each of the $m-2n+3$ edges of $\hat{G}$ belongs to exactly two $3$-paths 
 joining $s$ and $t$. Therefore, equation~\eqref{eq:incexc3} yields
\[
N_{3,3}(G)=2(m-2n+3),
\]
which is independent of the choice of $G\in T_{n,m}^u$. Consequently, 
$T_{n,m}^3(3)=T_{n,m}^u$. Now let $H$ be a spanning subgraph of $G$ with exactly $4$ edges containing the three edges of some set $\mathcal{A}_{xy}^{x}$, namely,
$sx,xy,yt\in E(H)$. Each $3$-path contributes exactly $m-6$ admissible choices for the fourth edge since the edges $st$, $xt$, and $sy$ are forbidden. Hence, equation~\eqref{eq:incexc3} gives
\[
N_{4,3}(G)=2(m-2n+3)(m-6).
\]
Again, this quantity does not depend on the choice of $G\in T_{n,m}^u$ thus  $T_{n,m}^3(4)=T_{n,m}^u$. Finally, we determine the set $T_{n,m}^3(5)$. Let $G\in T_{n,m}^u$, and let $H$ be a spanning subgraph of $G$ with exactly $5$ edges. Suppose first that $H$ contains exactly one path of length $3$ joining $s$ and $t$. Then $H$ consists of the edges of either $\mathcal{A}_{xy}^x$ or $\mathcal{A}_{xy}^y$ for some edge $xy\in E(\hat{G})$, together with two additional edges chosen from the $m-6$ edges not belonging to $\{sx,xy,yt,st,xt,sy\}$,
under the condition that these two extra edges do not form a $2$-path joining $s$ and $t$. There are exactly 
$2(m-2n+3)\left(\binom{m-6}{2}-(n-4)\right)$ such choices for $H$. Now suppose that $H$ contains exactly two paths of length $3$ joining $s$ and $t$. Then $H$ contains a pair of adjacent edges $uv$ and $vw$ in $\hat{G}$ together with either all edges in
$\{su,sw,vt\}$ or all edges in $\{sv,ut,wt\}$. 
Since there are exactly $p_3(\hat{G})$ pairs of adjacent edges in $\hat{G}$, there are $2p_3(\hat{G})$ such choices. 
Substituting into equation~\eqref{eq:incexc3}, we obtain that
\begin{equation*}
N_{5,3}(G)=2(m-2n+3)\left(\binom{m-6}{2}-(n-4)\right)-2p_3(\hat{G}).
\end{equation*}
Therefore, $N_{5,3}(G)$ is maximum among all two-terminal graphs $G\in T_{n,m}^u$ if and only if $p_3(\hat{G})$ is minimum. Let $(d_3,d_4,\ldots,d_n)$ be the degree sequence of $\hat{G}$. As $p_3(\hat{G})$ equals the number of pairs of adjacent edges in $\hat{G}$,  $p_3(\hat{G})=\sum_{i=3}^{n}\binom{d_i}{2}$. 
As $\hat{G}$ is a simple graph with $n-2$ vertices and $m-2n+3$ edges, its degree sequence belongs to the set $D_{n-2,m-2n+3}$. Moreover, 
$p_3(\hat{G})=\sum_{i=3}^{n}\phi(d_i)$, where
$\phi(x)=\frac{x(x-1)}{2}$ is a convex real-valued function. By Lemma~\ref{lemma:Schur}, the quantity $p_3(\hat{G})$ is minimized if and only if $d(\hat{G})$ is a fair tuple, or equivalently, if and only if $\hat{G}$ is an almost-regular graph. This completes the proof.
\end{proof}

We are in position to derive necessary conditions for $3$-LMRTTGs near $1$ in each nonempty set $T_{n,m}$. 
\begin{proposition}[Structural restrictions for $d$-LMRTTGs near $1$]\label{proposition:3-LMRTTG-near1}
Let $n$ and $m$ be integers such that $n\geq 5$ and $m\geq 5$. All the following assertions hold.
\begin{enumerate}[label=(\roman*)]
\item If $m\leq 2n-3$ then $H_{n,m}$ is the only $3$-LMRTTG near $1$ in $T_{n,m}$.
\item If $2n-3<m\binom{n}{2}$ and $G$ is $3$-LMRTTG near $1$ in $T_{n,m}$ then $G$ is in $T_{n,m}^u$ and $G-s-t$ is almost-regular. 
 \end{enumerate}
\end{proposition}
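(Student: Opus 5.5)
The proposition is essentially a packaging of the two preceding lemmas, so the plan is to split on the size of $m$ relative to $2n-3$ and invoke the corresponding lemma in each range. Throughout, I read item (ii) as covering $2n-3<m\leq\binom{n}{2}$; the comparison symbol is missing in the statement. I also note that $G-s-t$ is exactly $\hat{G}$.

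For item (i), assume $n\geq 5$ and $5\leq m\leq 2n-3$. These are precisely the hypotheses of Lemma~\ref{lemma:3-LMRTTG-near1-low}, which already asserts that $T_{n,m}^3(m)=\{H_{n,m}\}$. By Lemma~\ref{lemma:existenceLMRTTG}, $T_{n,m}^3(m)$ is the set of all $3$-LMRTTGs near $1$, so $H_{n,m}$ is the only one. Nothing further is needed beyond quoting these two results.

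For item (ii), assume $n\geq 5$ and $2n-3<m\leq\binom{n}{2}$, and let $G$ be a $3$-LMRTTG near $1$. Lemma~\ref{lemma:3-LMRTTG-near1-high} gives directly that $G\in T_{n,m}^u$ and that $\hat{G}=G-s-t$ is almost-regular.

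The only point I would double-check is the extreme value $m=\binom{n}{2}$, and more generally any $m$ where $\hat{G}$ has very few non-edges. That lemma's argument refines $T_{n,m}^3(i)$ up to $i=5$ and uses $T_{n,m}^3(5)\subseteq T_{n,m}^3(m)$, which needs $m\geq 5$. This holds because $m>2n-3\geq 7$.

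The degree-sequence step via Lemma~\ref{lemma:Schur} minimizes over all of $D_{n-2,m-2n+3}$. Since almost-regular simple graphs with $n-2$ vertices and $m-2n+3\leq\binom{n-2}{2}$ edges always exist, the minimum over realizable degree sequences is still attained exactly at the fair tuples. Hence the conclusion remains valid up to and including $m=\binom{n}{2}$, where $\hat{G}=K_{n-2}$ is trivially regular.

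I expect this realizability remark to be the only step requiring care; everything else is a direct citation of earlier results.
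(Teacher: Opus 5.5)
Your proposal is correct and matches the paper: item (i) is a direct citation of Lemma~\ref{lemma:3-LMRTTG-near1-low} and item (ii) of Lemma~\ref{lemma:3-LMRTTG-near1-high}, and your remark that fair degree sequences are realizable by simple graphs is a valid extra check the paper leaves implicit. One small slip, which the paper also makes: the refinement sets are nested decreasingly, so the inclusion you need (and which $m\geq 5$ guarantees) is $T_{n,m}^3(m)\subseteq T_{n,m}^3(5)$, not the reverse.
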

\begin{proof}
Assertion (i) follows from Lemma~\ref{lemma:3-LMRTTG-near1-low}, while assertion (ii) follows from Lemma~\ref{lemma:3-LMRTTG-near1-high}.    
\end{proof}

We now study $d$-LMRTTGs near $0$ in $T_{n,m}$ when $d\geq 3$. The analysis naturally splits into classes according to the density of the two-terminal graphs. As we shall see, the structure of  $d$-LMRTTGs is modified   when $m$ crosses the thresholds $2n-3$ and $3n-5$.

Let $G$ be in $T_{n,m}$. An edge set $U \subseteq E(G)$ is a \emph{$d$-cutset of $G$} if $G-U$ has no $d$-pathset. A \emph{trivial $d$-cutset of $G$} is a $d$-cutset $U$ containing all edges incident to one of the terminals of $G$. As it will turn out, the number of 
small $d$-cutsets will determine the structure of 
$d$-LMRTTGs near $0$ in $T_{n,m}$. 
 
For each $i\in \{1,2,\ldots,m\}$ and each positive integer $d$ we define $B_i^d(G)$ as the number of $d$-cutsets of $G$ with precisely $i$ edges. 
By the sum-rule, $B_i^d(G)=\binom{m}{i}-N_{m-i}^d(G)$. 
Let $\lambda_{st}^d(G)$ be the least positive integer $j$ such that $B_j^d(G)$ is positive. We omit the superscript and write $\lambda_{st}(G)$ instead of  $\lambda_{st}^d(G)$ when  $d\geq n-1$. 

\begin{lemma}\label{lemma:3-LMRTTG-near0-low}
Let $n$, $m$ and $d$ be integers such that $n\geq 5$,  $5\leq m \leq 2n-3$ and $d\geq 3$. The only $d$-LMRTTG near $0$ in $T_{n,m}$ is $H_{n,m}$.
\end{lemma}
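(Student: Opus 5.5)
The plan is to work with cutsets instead of pathsets, since the behaviour near $0$ is governed by the smallest $d$-cutsets. From $B_i^d(G)=\binom{m}{i}-N_{m-i}^d(G)$ we get
\[
U_G^d(\rho)=\sum_{i}B_i^d(G)\rho^i(1-\rho)^{m-i}.
\]
By Remark~\ref{remark:local}\ref{it2:local}, or by the mirror image of Lemma~\ref{lemma:existenceLMRTTG} (stages indexed from $m$ downward), the set of $d$-LMRTTGs near $0$ is obtained by a lexicographic procedure:
\begin{itemize}
\item first keep the graphs maximizing $\lambda_{st}^d(G)$;
\item among these, keep those minimizing $B^d_{\lambda}(G)$;
\item continue in the same way with the next index.
\end{itemize}
It therefore suffices to show that this procedure leaves only $H_{n,m}$ after at most two stages. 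By Lemma~\ref{lemma:st}, a graph that is $d$-stronger than $G$ is strictly better near $0$ as well. Hence we may assume throughout that $st\in E(G)$.

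\textbf{Step 1: upper bound on $\lambda_{st}^d$.} Every trivial cutset (all edges at $s$, or all edges at $t$) is a $d$-cutset, so $\lambda_{st}^d(G)\le \min\{d_G(s),d_G(t)\}$. Since $st$ is counted in both degrees, $d_G(s)+d_G(t)\le m+1$, so $\lambda_{st}^d(G)\le \lfloor (m+1)/2\rfloor$. I will check directly that $H_{n,m}$ attains this bound. Its $2$-paths $s v_i t$ together with $st$ are pairwise edge-disjoint, so any $d$-cutset must hit each of them; since $d\ge 3$, these routes are all available. This shows that every $d$-LMRTTG near $0$ must attain the bound.

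\textbf{Step 2: odd $m$.} Attaining the bound forces $d_G(s)=d_G(t)=(m+1)/2$ and means every edge of $G$ is incident to a terminal. Suppose some non-terminal $v$ were adjacent to only one terminal, say $s$. Then $sv$ lies on no $s$--$t$ path at all, because $v$ has no other edges. Removing the remaining edges at $s$ would then give a $d$-cutset of size $d_G(s)-1<\lambda$, a contradiction. Hence every neighbour of $s$ other than $t$ is a common neighbour of $s$ and $t$. This determines $G$ up to isomorphism as $H_{n,m}$, so $T$ collapses to $\{H_{n,m}\}$ already at the first stage.

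\textbf{Step 3: even $m$, the main obstacle.} Here the bound is $m/2$, and several graphs attain $\lambda=m/2$. One family is $st$ plus $(m-2)/2$ disjoint $2$-paths plus one further edge $e$, placed anywhere. Another is obtained by letting one terminal have degree $m/2+1$ while the other has degree $m/2$, with an extra pendant-type edge. The plan is to count the minimum cutsets $B^d_{m/2}$ exactly. Each such cutset must pick one edge from each of the $m/2$ edge-disjoint short routes (namely $st$ and the $2$-paths), giving $2^{(m-2)/2}$ candidate transversals. I will then show:
\begin{itemize}
\item When $e=v_3v_4$ joins two common neighbours, $e$ creates the $3$-paths $sv_3v_4t$ and $sv_4v_3t$. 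Transversals taking $sv_3$ and $v_4t$ (or $sv_4$ and $v_3t$) then fail to be cutsets, so $B^d_{m/2}$ strictly drops.
\item Every other placement of $e$, and every unbalanced-degree configuration, either lowers $\lambda$ or leaves all $2^{(m-2)/2}$ transversals as cutsets. Placements of $e$ incident to a terminal or to a vertex outside the $2$-paths create no new short route.
\end{itemize}
This requires a small case analysis on the position of $e$, which I expect to be the delicate part. It yields $T=\{H_{n,m}\}$ at the second stage, and hence uniqueness.
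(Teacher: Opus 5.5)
\textbf{Gap in the even case.} Your strategy is the paper's: maximize $\lambda_{st}^d$, then minimize $B_{m/2}^d$. Your odd case is complete, and your degree-sum and pendant-vertex argument justifies a step the paper only asserts. The gap is in Step~3. You never show that your two families exhaust the graphs with $st\in E(G)$ and $\lambda_{st}^d(G)=m/2$, and in fact they do not.

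Let $H_{n,m}'$ consist of $st$, $m/2-2$ edge-disjoint $2$-paths, and a disjoint $3$-path $s,u,v,t$. Its $m/2$ edge-disjoint routes all have length at most $3\le d$, so $\lambda_{st}^d(H_{n,m}')=m/2$. Yet $d(s)=d(t)=m/2$ and the terminals have only $m/2-2$ common neighbours. So $H_{n,m}'$ is neither ``$st$ plus $(m-2)/2$ two-paths plus $e$'' nor an unbalanced configuration, and your dichotomy (``lowers $\lambda$ or leaves all $2^{(m-2)/2}$ transversals as cutsets'') says nothing about it. This is exactly the competitor the paper's proof is built around, so as written your conclusion $T=\{H_{n,m}\}$ at the second stage is not established.

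\textbf{The repair.} Apply your own Step~2 argument to the even case.
\begin{enumerate}
\item Suppose $d(s)=d(t)=m/2$. Then exactly one edge $e$ avoids both terminals. A neighbour of $s$ (or of $t$) that is neither a common neighbour nor an endpoint of $e$ is pendant, which yields a $d$-cutset with $m/2-1$ edges. Both terminals have the same number of non-common neighbours. Hence either there are none, giving your first family, or there is exactly one on each side joined by $e$, giving $H_{n,m}'$.
\item Suppose $d(s)=m/2+1$. The same argument gives $N_G(t)-\{s\}\subseteq N_G(s)-\{t\}$ and a single pendant edge at $s$.
\end{enumerate}
The minimum $d$-cutsets of $H_{n,m}'$ are all $3\cdot 2^{m/2-2}$ transversals of its routes. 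This exceeds $B_{m/2}^d(H_{n,m})=2^{m/2-2}$, and every other maximizer has $2^{m/2-1}$, so the second stage does leave only $H_{n,m}$. The paper writes $3\times 2^{m/2-3}$ for $H_{n,m}'$, miscounting its $2$-paths, but the inequality holds either way.

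\textbf{Comparison with the paper.} The paper's proof has the complementary omission. It asserts that $H_{n,m}$ and $H_{n,m}'$ are the only maximizers of $\lambda_{st}^d$ for even $m$. This overlooks your first family, for example $e=v_3w$ with $w$ outside the $2$-paths, or a pendant edge at a terminal. It also overlooks graphs without $st$, such as $m/2$ disjoint $2$-paths. Your reduction via Lemma~\ref{lemma:st} and your placement analysis of $e$ are therefore genuinely needed, and combining them with the treatment of $H_{n,m}'$ gives a complete proof.
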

\begin{proof}
Let $G$ be a $d$-LMRTTG near $0$ in $T_{n,m}$. 
By Remark~\ref{remark:local}\ref{it2:local} it follows that $\lambda_{st}^d(G)$ must be maximum. 

If $m$ is odd then $\lambda_{st}^d(G)\leq (m+1)/2$, with equality if and only if $G$ contains the edge $st$ together with $(m-1)/2$ edge-disjoint $2$-paths joining $s$ and $t$, thus $G$ is isomorphic to $H_{n,m}$. If $m$ is  even, then $\lambda_{st}^d(G) \leq m/2$, and equality holds if and only if precisely one of the following conditions hold (i) $G$ is isomorphic to $H_{n,m}$ or (ii) $G$ consists of $st$ as well as 
$m/2-2$ edge-disjoint $2$-paths joining $s$ and $t$ 
and a path $su,uv,vt$ that is disjoint with the previous paths; let us denote this graph by $H_{n,m}'$. Observe that $B_j^d(H_{n,m})=B_j^d(H_{n,m}')=0$ 
when $j\in \{0,1,\ldots,m/2-1\}$. We will proceed to compare 
$B_{m/2}^d(H_{n,m})$ with $B_{m/2}^d(H_{n,m}')$. 
On the one hand, every $d$-cutset of $H_{n,m}$ consisting of $m/2$ edges includes the edge $st$, either $\{sv_3,sv_4\}$ or $\{v_3t,v_4t\}$, and $1$ edge out of the $2$ edges from each of the $m/2-3$ remaining edge-disjoint $2$-paths, thus $B_{m/2}^d(H_{n,m})=2^{m/2-2}$. 
On the other hand, every $d$-cutset of $H_{n,m}'$ 
consisting of $m/2$ edges includes the edge $st$, $1$ out of the $2$ edges from each of the $m/2-3$ edge-disjoint $2$ paths, and $1$ out of the $3$ edges in $\{su,uv,vt\}$, thus $B_{m/2}^d(H_{n,m}')=3\times 2^{m/2-3}$. Therefore, $B_{m/2}^d(H_{n,m}')>B_{m/2}^d(H_{n,m})$. 
As $G$ is $d$-LMRTTG near $0$ in $T_{n,m}$, Remark~\ref{remark:local}\ref{it2:local} gives  
that $G$ is not isomorphic to $H_{n,m}'$ 
thus it is isomorphic to $H_{n,m}$, as required. 
\end{proof}
Lemmas~\ref{lemma:3-LMRTTG-near1-low} and \ref{lemma:3-LMRTTG-near0-low} respectively assert that $H_{n,m}$ is the only $d$-LMRTTG both near $1$ and $0$ in $T_{n,m}$ when $d\geq 3$. In~\cite{RomeroLAGOS2-2025}, an inductive argument was proposed to show that $H_{n,m}$ is $3$-UMRTTG in $T_{n,m}$. However, one step of the argument relies on the assumption that the $3$-constrained two-terminal reliability of $H_{n,m}$ minus an edge $sv$ coincides with that of $H_{n,m-1}$, which does not hold in general. By Remark~\ref{lemma:minimize:n-2}, either $H_{n,m}$ is $3$-UMRTTG or no such $3$-UMRTTG exists. This is still an open problem. 

We now derive necessary conditions for a two-terminal graph $G$ to be $3$-LMRTTG near $0$ in $T_{n,m}$ when $m> 2n-3$. 
Lemma~\ref{lemma:minimize:n-2} is a consequence of Menger's theorem. 
\begin{lemma}\label{lemma:minimize:n-2}
Let $n$, $m$ and $d$ be integers such that $n\geq 5$, $2n-3 < m\leq \binom{n}{2}$, and $d\geq 3$. Then $B_i^d(G)=0$ 
for all $i\in \{0,1,\ldots,n-2\}$ if and only if $G$ 
is in $T_{n,m}^u$. 
\end{lemma}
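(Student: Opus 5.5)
We prove both directions separately. For the forward direction, assume $G\in T_{n,m}^u$, so $st\in E(G)$ and $sv,vt\in E(G)$ for every $v\in V(\hat G)$. The edge $st$ together with the $n-2$ paths $s,v,t$ ($v\in V(\hat G)$) form $n-1$ pairwise edge-disjoint paths of length at most $2\leq d$ joining $s$ and $t$. Any $d$-cutset must meet each of them. Hence every $d$-cutset has at least $n-1$ edges, and $B_i^d(G)=0$ for all $i\leq n-2$.

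For the converse, assume $G\notin T_{n,m}^u$ and exhibit a $d$-cutset with at most $n-2$ edges. Since $d\geq 3$ and $d_G(s,t)\le$ anything relevant only through finitely many paths, the key observation is that removing all edges incident to a terminal always destroys every path. So the trivial cutsets at $s$ and at $t$ are $d$-cutsets of sizes $d_G(s)$ and $d_G(t)$. If $G\notin T_{n,m}^u$, then some terminal, say $s$, is not universal. It is either missing $st$ or missing some $sv$, and in both cases $d_G(s)\leq n-2$. Therefore $B_{d_G(s)}^d(G)>0$ with $d_G(s)\leq n-2$, as required.

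One subtlety concerns the index $i=0$. The empty set is a $d$-cutset exactly when $G$ has no $d$-pathset at all. In that case $B_0^d(G)=1>0$, and the converse conclusion still holds. It also follows automatically from the degree bound, unless $d_G(s)=0$, which is covered the same way since the empty set is then the trivial cutset.

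The hypothesis $m>2n-3$ is not really needed for the argument. It only guarantees that $T_{n,m}^u$ is nonempty, which makes the statement meaningful. Menger's theorem underlies the intuition that the minimum cutset size is governed by the number of edge-disjoint short paths, but the direct argument above suffices.

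No step here is a genuine obstacle. The only care needed is that a trivial cutset at a terminal is a $d$-cutset for every $d$, and that a non-universal terminal has degree at most $n-2$.
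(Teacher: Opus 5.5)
Your proof is correct and follows the paper's argument. In the forward direction, $st$ and the $n-2$ paths $s,v,t$ are edge-disjoint $s$--$t$ paths of length at most $2$, so every $d$-cutset has at least $n-1$ edges; the paper invokes Menger's theorem here but only uses this easy direction. In the converse, a non-universal terminal gives a trivial $d$-cutset with at most $n-2$ edges, and the stray sentence about ``$d_G(s,t)\le$ anything relevant'' there is meaningless and should be deleted, though the logic does not depend on it.
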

\begin{proof}
Let $n$, $m$ and $d$ be as in the statement, and let $G$ be any two-terminal graph in $T_{n,m}$. On the one hand, if $G$ is in $T_{n,m}^u$ then 
$G$ includes $st$ as well as precisely $n-2$ edge-disjoint $2$-paths joining $s$ and $t$. By Menger’s theorem, $\lambda_{st}(G) = n-1$. 
As each of the previous $n-1$ edge-disjoint paths have length $1$ or $2$ and $d\geq 3$ it follows that $\lambda_{st}^d(G)=n-1$, and $B_i^d(G)=0$ for all $i\in \{0,1,\ldots,n-2\}$.   
Conversely, if $G\notin T_{n,m}^u$, then $G$ has a trivial $d$-cutset consisting of at most $n-2$ edges, thus $\lambda_{st}^d(G)\leq n-2$ and $B_{n-2}^d(G)>0$. The lemma follows.
\end{proof}

\begin{lemma}\label{lemma:3-LMRTTG-near0-medium}
Let $n$, $m$ and $d$ be integers such that $n\geq 5$, $2n-3< m\leq 3n-6$, and $d\geq 3$. If $G$ is $d$-LMRTTG near $0$ in $T_{n,m}$ then $G$ is in $T_{n,m}^u$ and $\hat{G}$ is an almost-regular graph all whose components are paths.
\end{lemma}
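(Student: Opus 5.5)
The plan is to follow the cutset approach of Lemma~\ref{lemma:3-LMRTTG-near0-low}. By Remark~\ref{remark:local}\ref{it2:local}, a $d$-LMRTTG near $0$ lexicographically minimizes $(B_0^d,B_1^d,B_2^d,\ldots)$, so I would construct the winners stage by stage, exactly as in Lemma~\ref{lemma:existenceLMRTTG} but from the cutset side.

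\textbf{Stage 1: universal terminals.} By Lemma~\ref{lemma:minimize:n-2}, the graphs with $B_i^d(G)=0$ for all $i\le n-2$ are exactly those in $T_{n,m}^u$. Since $T_{n,m}^u$ is nonempty when $m>2n-3$, every $d$-LMRTTG near $0$ lies in $T_{n,m}^u$.

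\textbf{Stage 2: $\hat{G}$ is a forest.} Next I would minimize $B_{n-1}^d$ over $T_{n,m}^u$. A $d$-cutset with $n-1$ edges must contain $st$ and exactly one edge of each $2$-path $sv,vt$. Such a choice splits $V(\hat{G})$ into vertices still joined to $s$ and vertices still joined to $t$. It is a cutset if and only if no edge $xy$ of $\hat{G}$ crosses the split, since a crossing edge gives the path $s,x,y,t$ of length $3\le d$. Hence $B_{n-1}^d(G)=2^{c(\hat{G})}$, where $c(\hat{G})$ is the number of components of $\hat{G}$. Now $\hat{G}$ has $n-2$ vertices and $m'=m-2n+3\le n-3$ edges, so $c(\hat{G})\ge n-2-m'$, with equality if and only if $\hat{G}$ is a forest. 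So the survivors are those $G\in T_{n,m}^u$ with $\hat{G}$ a forest.

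\textbf{Stage 3: almost-regularity.} I would then minimize $B_n^d$ among these survivors. An $n$-edge $d$-cutset is of one of two kinds.
\begin{enumerate}[label=(\alph*)]
\item It is $st$, one edge from each $2$-path, and one edge $e$ of $\hat{G}$. For a fixed $e$ there are $2^{c(\hat{G}-e)}=2^{c+1}$ of these, so this kind gives $m'2^{c+1}$ cutsets. This is the same for every survivor.
\item It is $st$, both edges $sv,vt$ for one vertex $v$, and one edge from each other $2$-path.
\end{enumerate}
For $d=3$, the deleted vertex $v$ no longer lies on any short path, so for each $v$ the number of kind-(b) cutsets is $2^{c(\hat{G}-v)}=2^{c-1+d_{\hat G}(v)}$ in a forest. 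So kind (b) contributes $2^{c-1}\sum_v 2^{d_{\hat G}(v)}$. Since $x\mapsto 2^x$ is convex, Lemma~\ref{lemma:Schur} shows this is minimized exactly when the degree sequence is fair, that is, when $\hat{G}$ is almost-regular.

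\textbf{Almost-regular forests are unions of paths.} In such a forest the average degree is $2m'/(n-2)<2$. So every degree lies in $\{0,1\}$ or $\{1,2\}$, the maximum degree is at most $2$, and every component is a path. This gives the statement for $d=3$.

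\textbf{Main obstacle: $d\ge 4$.} Here a vertex $v$ cut off from both terminals can still carry paths $s,x,v,y,t$ of length $4$. The kind-(b) count becomes $2^{c}$ for every $v$, so $B_n^d$ is the same for all forests and does not separate them. I would push one level further to $B_{n+1}^d$. The configurations whose count depends on $\hat{G}$ are those in which two vertices $u,w$ are both cut off from the terminals. The count of partitions with no crossing edge, taken in the graph where both $u$ and $w$ are removed or bypassed, should then depend on $d_{\hat G}(u)$, $d_{\hat G}(w)$ and on whether $u$ and $w$ are adjacent. I expect this to reduce to a convex function of the degrees plus terms involving only $m'$ and $c$, after which Lemma~\ref{lemma:Schur} applies again. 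Terms of kind (a) at this level, and mixtures of kinds (a) and (b), must be checked to be constant over forests with fixed $m'$. I expect this bookkeeping, and showing that the adjacency terms do not break the convexity argument, to be the hardest part. If it fails, I would separate the cases $d=4$ and $d\ge 5$, since for $d\ge 5$ even longer detours through two cut-off vertices become relevant.
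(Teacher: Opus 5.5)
Your argument for $d=3$ is correct and is the paper's own proof. Lemma~\ref{lemma:minimize:n-2} gives universality, $B_{n-1}^d(G)=2^{c(\hat G)}$ forces a forest, and the kind-(b) count $2^{c-1+d_{\hat G}(v)}$ with Lemma~\ref{lemma:Schur} forces almost-regularity. Your average-degree argument for path components is a fine substitute for the paper's leaf argument.

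The gap is the case $d\ge 4$, and it cannot be filled, because your key claim there is false. A vertex $v$ cut off from both terminals contributes $2^{c}$ kind-(b) cutsets only if $d_{\hat G}(v)\ge 1$. If $v$ is isolated in $\hat G$, it is a whole component by itself, the other $n-3$ vertices carry only $c-1$ components, and $v$ contributes $2^{c-1}$. This holds for $d=4$ and $d\ge 5$ alike: if no edge of $\hat G-v$ crosses the split and all neighbours of $v$ lie on one side, there is no $s$--$t$ path at all. Write $i_0$ for the number of isolated vertices of the forest $\hat G$. Then for every $d\ge 4$,
\[
B_n^d(G)=m'2^{c+1}+(n-2)2^{c}-i_0\,2^{c-1}.
\]
So $B_n^d$ is minimized exactly when $i_0=c-1$, i.e.\ when $\hat G$ is one tree on $m'+1$ vertices plus isolated vertices. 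For $2\le m'\le n-4$, that is $2n-1\le m\le 3n-7$, such a $\hat G$ has vertices of degree $0$ and of degree at least $2$, so it is not almost-regular.

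A concrete counterexample is $T_{6,11}$. Here $T_{6,11}^u$ has exactly two members, with $\hat G=P_3\cup K_1$ (this is $A_{6,2}$) and with $\hat G=2K_2$. Both have $B_i^d=0$ for $i\le 4$ and $B_5^d=4$. Counting kind (a) plus kind (b), however, $B_6^d=16+14=30$ for the first and $16+16=32$ for the second. By Remark~\ref{remark:local}\ref{it2:local}, $A_{6,2}$ is the only $d$-LMRTTG near $0$ in $T_{6,11}$ for every $d\ge 4$, yet its $\hat G$ has degrees $2,1,1,0$. Since the lexicographic comparison is already settled at $B_n^d$, no analysis of $B_{n+1}^d$ can rescue the statement.

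The paper's proof has the same blind spot. It asserts that the cutsets containing $sv$ and $vt$ number $2^{\kappa(\hat G-v)}$ for every $d\ge 3$. That is your $d=3$ count, and it ignores the length-$4$ paths $s,x,v,y,t$ that you correctly identified. So the lemma as stated is false for $d\ge 4$ and $2n-1\le m\le 3n-7$. What you and the paper actually establish is the case $d=3$.

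For $d\ge 4$ the conclusion still holds trivially at $m=2n-2$, where $\hat G$ is a single edge plus isolated vertices. At $m=3n-6$, all spanning trees tie at $B_n^d$, so this argument decides nothing there.
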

\begin{proof}
Let $n$, $m$ and $d$ be as in the statement, and 
let $G$ be any $d$-LMRTTG near $0$ in $T_{n,m}$. Recall that 
$B_i^d(G)=\binom{m}{i}-N_{m-i}^d(G)$. 
As $G$ is $d$-LMRTTG near $0$ in $T_{n,m}$, Remark~\ref{it2:local} implies that the numbers $B_1^d(G),B_2^d(G),\ldots,B_{n-2}^d(G)$ must be minimum. 
Therefore, by Lemma~\ref{lemma:minimize:n-2}, the two-terminal graph $G$ is in $T_{n,m}^u$. Now we will find $B_{n-1}(G)$. 
Let $U$ be any $d$-cutset of $G$ consisting of $n-1$ edges. 
To ensure the absence of $1$-paths and $2$-paths, $U$ 
consists precisely of $st$ as well as edge sets $sW \cup tW^c$, where $W \cup W^c=V(\hat{G})$. Additionally, 
$G-U$ does not include $3$-paths joining $s$ and $t$, or equivalently, there is no edge with one endpoint in $W$ and one endpoint in $W^c$. This occurs precisely when $W$ is a union of components of the simple graph $\hat{G}$. Therefore, the number of $d$-cutsets of $G$ with $n-1$ edges equals the number of all subsets of components of $\hat{G}$, and $B_{n-1}^d(G)=2^{\kappa(\hat{G})}$ where $\kappa(\hat{G})$ is the number of components of $G$. 
Consequently, $B_{n-1}^d(G)$ is minimum if and only if $\kappa(\hat{G})$ is minimum, which occurs precisely when $\hat{G}$ is a forest. 

We now find $B_n^d(G)$ among all two-terminal graphs 
$G$ in $T_{n,m}^u$ such that $\hat{G}$ is a forest. 
Again, any $d$-cutset of $G$ consisting of $n$ edges 
includes $st$, all edges in $sW \cup tW^c$ for some vertex set $W$ in $V(\hat{G})$ plus an extra edge $e$. If $e$ is either $sv$ or $vt$ for some vertex $v$ in $\hat{G}$ then $v$ is not incident to any of the terminals, and there are precisely as many choices of $d$-cutsets including the edge $e$ as $2^{\kappa(\hat{G}-v)}$. As $\hat{G}$ is a forest and $v$ is in $V(\hat{G})$, it follows that $\kappa(\hat{G}-v)=\kappa(G)+d_{\hat{G}}(v)-1$, where $d_{\hat{G}}(v)$ is the degree of vertex $v$ in $\hat{G}$. 
As $\hat{G}$ has $n-2$ vertices, 
there are precisely 
$\sum_{v\in V(\hat{G})}2^{\kappa(G)+d_{\hat{G}}(v)-1}$ such $d$-cutsets. Similarly, if $e$ is in $E(\hat{G})$, then there are $2^{\kappa(\hat{G}-e)}$ choices of $d$-cutsets including $e$. As $\kappa(\hat{G}-e)=\kappa(\hat{G})+1$ and $|E(\hat{G})|=m-2n+3$, there are precisely $(m-2n+3)2^{\kappa(G)+1}$ such $d$-cutsets. Therefore,
\begin{equation*}
B_n^d(G)= \sum_{v\in V(\hat{G})}2^{\kappa(\hat{G})+d_{\hat{G}}(v)-1} + (m-2n+3)2^{\kappa(\hat{G})+1}.  
\end{equation*}
As $\hat{G}$ is a forest on $n-2$ vertices and $m-2n+3$ edges, $\kappa(\hat{G})=n-2-(m-2n+3)=3n-5-m$. Rewriting the previous equation yields
\begin{equation*}
B_n^d(G) = 2^{\kappa(\hat{G})-1}(\sum_{v\in V(\hat{G})}2^{d_{\hat{G}}(v)} + (m-2n+3)) 
= 2^{3n-6-m}(\sum_{v\in V(\hat{G})}2^{d_{\hat{G}}(v)} + (m-2n+3)),
\end{equation*}
thus $B_n^d(G)$ is minimum among two-terminal graphs $G$ in $T_{n,m}^u$ such that $\hat{G}$ is a forest if and only if its degree sequence $(d_3,d_4,\ldots,d_n)$ attains the minimum of the function $f(d_3,d_4,\ldots,d_n)=\sum_{i=3}^{n}2^{d_i}$. 
The tuple $(d_3,d_4,\ldots,d_n)$ is the degree sequence of $\hat{G}$ which has $m-2n+3$ edges; 
therefore it belongs to the set $D_{n-2,m-2n+3}$. 
Further, the function $f$ can be written 
as $f(d_3,d_4,\ldots,d_n)=\sum_{i=3}^{n}\phi(d_i)$, 
where $\phi(x)=2^x$. As $\phi$ is a convex real-valued function, $f$ is minimum  when $(d_3,d_4,\ldots,d_n)$ is a fair tuple in $D_{n-2,m-2n+3}$, which is attained precisely when $\hat{G}$ is an almost-regular graph. As $\hat{G}$ is a forest with at least one edge, it has a vertex 
with degree $1$. 
Since $\hat{G}$ is an almost-regular graph, every vertex has degree either $0$ or $1$, or every vertex has degree either $1$ or $2$. As $\hat{G}$ is a forest, each of its components is a path, and the lemma follows.  
\end{proof}

\begin{lemma}\label{lemma:minimize:n-2+k}
Let $n$, $m$ and $d$ be integers such that $n\geq 5$, $3n-5\leq m\leq \binom{n}{2}$, and $d\geq 3$. 
Let $G$ be in $T_{n,m}^u$. Then, for each 
$i\in \{1,2,\ldots,\lambda(\hat{G})\}$ it holds that $B_{n-2+i}^d(G)= 2\binom{m-n+1}{i-1}$ and $B_{n-1+\lambda(\hat{G})}^d>2\binom{m-n+1}{\lambda(\hat{G})}$. 
\end{lemma}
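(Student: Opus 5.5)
The plan is to split the $d$-cutsets of $G$ into trivial and nontrivial ones, count the trivial ones exactly, and show that no nontrivial $d$-cutset has fewer than $n-1+\lambda(\hat{G})$ edges, while at least one has exactly that many. Since $G\in T_{n,m}^u$, both terminals have degree $n-1$.

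\textbf{Trivial cutsets.} A trivial $d$-cutset with $n-2+i$ edges consists of the $n-1$ edges at $s$ (or at $t$) together with any $i-1$ of the remaining $m-n+1$ edges. This gives $\binom{m-n+1}{i-1}$ sets for each terminal. The two families are disjoint in this range: a set containing both stars has at least $2n-3$ edges. On the other hand $\lambda(\hat{G})\le n-3$, so $n-2+i\le 2n-5$. Hence there are exactly $2\binom{m-n+1}{i-1}$ trivial $d$-cutsets of size $n-2+i$. Both claims of the lemma then reduce to statements about nontrivial cutsets.

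\textbf{Structure of a nontrivial cutset.} Let $U$ be a nontrivial $d$-cutset. Since $d\ge 3$, $G-U$ has no $s$--$t$ path of length $1$, $2$ or $3$. Set $S'=\{v\in V(\hat{G}): sv\notin U\}$ and $T'=\{v\in V(\hat{G}): vt\notin U\}$.
\begin{itemize}
\item Killing $st$ and all $2$-paths forces $st\in U$ and $S'\cap T'=\emptyset$.
\item Nontriviality forces $S'\neq\emptyset$ and $T'\neq\emptyset$.
\item Killing all $3$-paths forces every edge of $\hat{G}$ between $S'$ and $T'$ into $U$.
\end{itemize}
Let $R=V(\hat{G})-(S'\cup T')$. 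Then
\[
|U|\ \ge\ 1+(n-2)+|R|+e_{\hat{G}}(S',T').
\]

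\textbf{Strict inequality at $n-1+\lambda(\hat{G})$.} Choose $R=\emptyset$ and let $(S',T')$ be a minimum edge cut of $\hat{G}$. Take $U=\{st\}\cup sT'\cup tS'\cup E_{\hat{G}}(S',T')$. This is a nontrivial $d$-cutset with exactly $n-1+\lambda(\hat{G})$ edges. Adding it to the $2\binom{m-n+1}{\lambda(\hat{G})}$ trivial cutsets of that size gives the strict inequality.

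\textbf{The lower bound for sizes $n-2+i$, $i\le\lambda(\hat{G})$.} What remains is to show $|R|+e_{\hat{G}}(S',T')\ge\lambda(\hat{G})$ for every admissible triple $(S',R,T')$.
\begin{itemize}
\item When $R=\emptyset$, $(S',T')$ is an edge cut of $\hat{G}$ with both sides nonempty, so $e_{\hat{G}}(S',T')\ge\lambda(\hat{G})$ directly.
\item When $R\neq\emptyset$, my first attempt is to absorb each vertex of $R$ into one side. The cut $(S'\cup R,T')$ has size $e(S',T')+e(R,T')$, and I would try to show this exceeds $e(S',T')$ by at most $|R|$, or else choose the side per vertex so that this holds.
\end{itemize}

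\textbf{Main obstacle.} The case $R\neq\emptyset$ is where I expect the main difficulty, and in fact a genuine gap. A vertex of $R$ costs only its two terminal edges in $U$, yet it may have many neighbours in $\hat{G}$. For example, take two copies of $K_k$ and a vertex $r$ adjacent to all $2k$ of their vertices. Putting $R=\{r\}$, $S'$ one clique and $T'$ the other gives a nontrivial $d$-cutset of size $n$, while $\lambda(\hat{G})=k$. So before finishing, I would check whether the intended parameter is the minimum, over vertex subsets $R\subseteq V(\hat{G})$, of $|R|$ plus the edge-connectivity of $\hat{G}-R$, a mixed vertex/edge cut quantity. With that quantity in place of $\lambda(\hat{G})$, the argument above goes through verbatim.
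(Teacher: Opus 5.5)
Your diagnosis is correct, and it locates a flaw in the paper's own proof rather than a shortcoming of your approach.

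\textbf{Where the paper's proof breaks.} The paper asserts that a nontrivial cutset $U'$ consists of $st$, the edges $sW\cup tW^c$ for a nonempty proper $W\subset V(\hat G)$, and a set $U\subseteq E(\hat G)$ with $|U|=i-1$. This silently assumes your set $R$ is empty, i.e.\ that no vertex of $\hat G$ loses both of its terminal edges. Once $R\neq\emptyset$, the next step fails: that step takes a crossing edge of the connected graph $\hat G-U$ and obtains a surviving $3$-path, but every crossing edge may be incident to a vertex of $R$.

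\textbf{Your example refutes the lemma for $d=3$.} Take $k=2$. Then $\hat G$ is the bowtie, $n=7$, $m=17\ge 3n-5$, and $\lambda(\hat G)=2$. Here $B_7^3(G)=24$: the $22$ trivial cutsets, plus your cutset and its mirror image. The lemma asserts $B_7^3(G)=2\binom{11}{1}=22$. So the statement is false as written and cannot be proved by any route. What you did carry out agrees with the paper: the exact trivial count (including the disjointness check, which the paper omits) and the construction giving the strict inequality.

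\textbf{Your example only works for $d=3$.} It is a $3$-cutset but not a $d$-cutset for $d\ge 4$, because $s,a,r,b,t$ with $a\in S'$, $b\in T'$ is an $s$--$t$ path of length $4$. Likewise, your structural description only encodes the $d=3$ constraints. For general $d$, a nontrivial $d$-cutset must meet every $S'$--$T'$ path of length at most $d-2$ in $\hat G$.

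\textbf{Your replacement parameter is also $d=3$-specific.} Let $\mu=\min_R\bigl(|R|+\lambda(\hat G-R)\bigr)$, over $R$ leaving at least two vertices. With $\mu$ in place of $\lambda(\hat G)$, the equality part holds for every $d\ge 3$, since every $d$-cutset is a $3$-cutset. For $d\ge 4$, however, the strict inequality at $n-1+\mu$ can fail: the bowtie has $\mu=1$, yet $B_7^d(G)=22$ for every $d\ge 4$.

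\textbf{Restricting to $d\ge 4$ does not rescue the original statement.} It is true for $d\ge n-1$. If $U$ is nontrivial and $C$ is the vertex set of the component of $s$ in $G-U$, then $C\setminus\{s\}$ is a nonempty proper subset of $V(\hat G)$. Hence $U$ contains all $n-1+e_{\hat G}(C\setminus\{s\},V(\hat G)\setminus C)\ge n-1+\lambda(\hat G)$ edges leaving $C$. It fails, however, for $d=4$:
\begin{itemize}
\item Take cliques $Q_1,Q_2\cong K_9$, independent triples $X_1,X_2$ with $X_j$ complete to $Q_j$, and all nine edges between $X_1$ and $X_2$. This gives $n=26$ and $m=184$.
\item Any cut of $\hat G$ other than $(Q_1\cup X_1,Q_2\cup X_2)$ splits one of these halves, and so crosses at least $9$ edges. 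Hence $\lambda(\hat G)=9$.
\item Yet $\{st\}\cup sQ_2\cup tQ_1\cup\{sv,vt: v\in X_1\cup X_2\}$ is a nontrivial $4$-cutset with $n+5=n-2+7$ edges. Every surviving $s$--$t$ path has length at least $5$.
\end{itemize}

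Lemma~\ref{lemma:3-LMRTTG-near0-high}, which invokes this lemma to force $\lambda(\hat G)$ to be maximum, inherits the gap.
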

\begin{proof}
Let $G\in T_{n,m}^u$ and let 
$i\in \{1,2,\ldots,\lambda(\hat{G})\}$. 
Each trivial $d$-cutset with $n-2+i$ edges consists of all $n-1$ edges incident to one terminal together with $i-1$ additional edges, thus 
$B_{n-2+i}^d(G)\geq 2\binom{m-n+1}{i-1}$.

Suppose that $U'$ is a nontrivial $d$-cutset with $n-2+i$ edges. Since $G\in T_{n,m}^u$, the set $U'$ must contain $st$, all edges in $sW\cup tW^c$ for some nonempty proper subset $W\subset V(\hat{G})$, and an edge set $U\subseteq E(\hat{G})$. Hence
$U'=\{st\}\cup sW\cup tW^c\cup U$, where 
$|U|=|U'|-(n-1)=i-1<\lambda(\hat{G})$. Therefore, $\hat{G}-U$ is connected, so there exists an edge $xy$ in $\hat{G}-U$ with $x\in W$ and $y\in W^c$. Consequently, $sx,xy,yt$ is a $3$-path in $G-U'$, a contradiction. Thus every $d$-cutset with $n-2+i$ edges is trivial, and 
$B_{n-2+i}^d(G)=2\binom{m-n+1}{i-1}$.

Now let $i=\lambda(\hat{G})+1$. Since $\lambda(\hat{G})$ is the edge-connectivity of $\hat{G}$, there exists an edge set $U'\subseteq E(\hat{G})$ with $|U'|=\lambda(\hat{G})$ such that $\hat{G}-U'$ is disconnected. Let $W$ be the vertex set of a component of $\hat{G}-U'$, and define
$U=\{st\}\cup sW\cup tW^c\cup U'$. Then $U$ is a nontrivial $d$-cutset with $n-1+\lambda(\hat{G})$ edges. Since there are also $2\binom{m-n+1}{\lambda(\hat{G})}$ trivial $d$-cutsets of this cardinality, it follows that
$B_{n-1+\lambda(\hat{G})}^d(G)>
2\binom{m-n+1}{\lambda(\hat{G})}$. The lemma follows.
\end{proof}

Lemma~\ref{lemma:3-LMRTTG-near0-high} derives a necessary condition for a two-terminal graph in $T_{n,m}$ to be a $d$-LMRTTG near $0$ when $n\geq 5$, $3n-5 \leq m \leq \binom{n}{2}$, and $d\geq 3$. 

\begin{lemma}\label{lemma:3-LMRTTG-near0-high}
Let $n$, $m$ and $d$ be integers such that $n\geq 5$, $3n-5 \leq m \leq \binom{n}{2}$ and $d\geq 3$. 
If $G$ is a $d$-LMRTTG near $0$ then $G$ is in $T_{n,m}^u$ and $\hat{G}$ has the maximum edge-connectivity.
\end{lemma}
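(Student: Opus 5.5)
The argument follows the pattern of Lemma~\ref{lemma:3-LMRTTG-near0-medium}: we minimize the cutset counts $B_1^d(G),B_2^d(G),\ldots$ in increasing order, using that $B_i^d(G)=\binom{m}{i}-N_{m-i}^d(G)$. By Remark~\ref{remark:local}\ref{it2:local}, a $d$-LMRTTG near $0$ must minimize these counts lexicographically in $i$ over $T_{n,m}$. In other words, among all competitors it must first minimize $B_1^d$, then $B_2^d$ among the minimizers, and so on.

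\textbf{Step 1: $G\in T_{n,m}^u$.} Since $m\ge 3n-5>2n-3$, Lemma~\ref{lemma:minimize:n-2} applies. The class $T_{n,m}^u$ is nonempty because $m\ge 2n-3$, and its members achieve $B_i^d=0$ for all $i\le n-2$. Every graph outside $T_{n,m}^u$ has $B_{n-2}^d>0$ or an earlier positive count. So $G$ is strictly beaten in the lexicographic order by any member of $T_{n,m}^u$, contradicting local optimality near $0$. Hence $G\in T_{n,m}^u$.

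\textbf{Step 2: $\hat G$ has maximum edge-connectivity.} Restrict now to competitors in $T_{n,m}^u$ and apply Lemma~\ref{lemma:minimize:n-2+k}. For $G,G'\in T_{n,m}^u$ with $\lambda(\hat G)<\lambda(\hat G')$, the counts $B_{n-2+i}^d$ agree and equal $2\binom{m-n+1}{i-1}$ for every $i\le\lambda(\hat G)$. At $i=\lambda(\hat G)+1\le\lambda(\hat G')$ we get
\[
B^d_{n-1+\lambda(\hat G)}(G)>2\binom{m-n+1}{\lambda(\hat G)}=B^d_{n-1+\lambda(\hat G)}(G').
\]
All earlier counts coincide, including $B_j^d=0$ for $j\le n-2$. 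Remark~\ref{remark:local}\ref{it2:local}, applied after translating $B$'s into $N$'s, then gives $R_{G'}^d(\rho)>R_G^d(\rho)$ for small $\rho>0$. This contradicts $G$ being a $d$-LMRTTG near $0$, so $\lambda(\hat G)$ must equal the maximum over $T_{n,m}^u$, which is the maximum edge-connectivity over all graphs on $n-2$ vertices with $m-2n+3$ edges.

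\textbf{Main obstacle: the boundary index.} The delicate point is when $\lambda(\hat G)+1$ exceeds the range where the cutset indices make sense, or when $\lambda(\hat G')$ is large enough that $n-1+\lambda(\hat G)>m$. This requires checking that $i=\lambda(\hat G)+1\le\lambda(\hat G')$ is a legitimate index in Lemma~\ref{lemma:minimize:n-2+k} for $G'$. That holds because Lemma~\ref{lemma:minimize:n-2+k} covers every $i\le\lambda(\hat G')$ and $n-2+\lambda(\hat G')\le n-2+\delta(\hat G')\le m$.

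The hypothesis $m\ge 3n-5$ guarantees $m-2n+3\ge n-2$. So there are graphs $\hat G$ on $n-2$ vertices with $\lambda(\hat G)\ge 1$, and the lemma's statement is not vacuous; otherwise connectedness would be impossible, which is the forest regime handled in Lemma~\ref{lemma:3-LMRTTG-near0-medium}. With these checks, the comparison in Step 2 is valid.
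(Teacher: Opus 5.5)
Your route is the paper's. Step~1, via Lemma~\ref{lemma:minimize:n-2}, is fine, and Step~2 is exactly the paper's appeal to Lemma~\ref{lemma:minimize:n-2+k}. The problem is that this lemma is false as stated. So the equality $B^d_{n-2+i}(G')=2\binom{m-n+1}{i-1}$ for all $i\le\lambda(\hat G')$, on which your comparison rests, is not available.

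The lemma's proof writes every nontrivial cutset as $\{st\}\cup sW\cup tW^c\cup U$ with $U\subseteq E(\hat G)$. This overlooks cutsets $U'$ that delete \emph{both} $sv$ and $vt$ for every $v$ in some set $C$. Put $A=\{v: sv\notin U'\}$, $B=\{v: vt\notin U'\}$ and $U=U'\cap E(\hat G)$. Such a cutset has $n-1+|C|+|U|$ edges, and $U$ need only destroy the $A$--$B$ paths of $\hat G$ with at most $d-2$ edges. Concretely, take $n=7$, $m=17$, $d=3$, both terminals universal, and $\hat G$ the butterfly (triangles $ca_1a_2$ and $cb_1b_2$ sharing $c$), so $\lambda(\hat G)=2$. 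Then $\{st,sc,ct,sb_1,sb_2,a_1t,a_2t\}$ is a nontrivial $3$-cutset with $n=7$ edges, and $B^3_7(G)\ge 23>22=2\binom{11}{1}$. The index bookkeeping in your ``main obstacle'' paragraph is harmless. This is the real delicate point, and the paper's proof has the same gap.

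The statement itself survives, but it needs an extra idea.
\begin{itemize}
\item \emph{The right parameter.} Let $\mu_d(\hat G)$ be the minimum of $|C|+|U|$ over partitions $(A,B,C)$ of $V(\hat G)$ with $A,B\neq\emptyset$ and sets $U\subseteq E(\hat G)$ such that $\hat G-U$ has no $A$--$B$ path with at most $d-2$ edges. The argument of Lemma~\ref{lemma:minimize:n-2+k} then correctly yields $B^d_{n-2+i}(G)=2\binom{m-n+1}{i-1}$ for $i\le\mu_d(\hat G)$, with strict inequality at $i=\mu_d(\hat G)+1$. The two trivial families do not overlap because $\mu_d\le n-3$. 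So your lexicographic comparison shows that a $d$-LMRTTG near $0$ maximizes $\mu_d$, not $\lambda$.
\item \emph{Upper bound.} $\mu_d(\hat G)\le\lambda(\hat G)$: take $C=\emptyset$ and $U$ a minimum edge cut.
\item \emph{Lower bound.} $\mu_d(\hat G)\ge\kappa_v(\hat G)$, the vertex connectivity. Here $U\supseteq E(A,B)$. If $A$ and $B$ are completely joined, then $|C|+|A||B|\ge|V(\hat G)|-1$. Otherwise, Whitney's argument for $\kappa_v\le\lambda$ turns $C$ plus one endpoint of each edge of $E(A,B)$ into a vertex separator.
\item \emph{Conclusion.} By Harary's theorem, for $N=n-2$ vertices and $M=m-2n+3\ge N$ edges one has $\max\kappa_v=\max\lambda=\lfloor 2M/N\rfloor$. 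Hence $\max\mu_d=\max\lambda$, and every maximizer of $\mu_d$ satisfies $\lambda(\hat G)\ge\mu_d(\hat G)=\max\lambda$.
\end{itemize}
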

\begin{proof}
Let $G$ be a $d$-LMRTTG near $0$ in $T_{n,m}$. Define 
$B^d(G)=(B_0^d(G),\ldots,B_m(G)^d(G))$. 
By Remark~\ref{remark:local}\ref{it2:local}, the tuple 
$B^d(G)=(B_0^d(G),B_1^d(G),\ldots,B_m(G)^d(G))$ is lexicographically minimum among all two-terminal graphs in $T_{n,m}$. Therefore, Lemma~\ref{lemma:minimize:n-2} implies that $G$ is in $T_{n,m}^u$, 
while Lemma~\ref{lemma:minimize:n-2+k} gives that $\lambda(\hat{G})$ must be maximum. The lemma follows.
\end{proof}

We are in position to derive necessary conditions for $3$-LMRTTGs near $0$ in each nonempty set $T_{n,m}$. 
\begin{proposition}[Structural restrictions for $d$-LMRTTGs near $0$]\label{proposition:d-LMRTTG-near0}
Let $n$, $m$ and $d$ be integers such that $n\geq 5$, $m\geq 5$ and $d\geq 3$. Let $G$ be any $d$-LMRTTG near $0$ in $T_{n,m}$. 
All the following assertions hold.
\begin{enumerate}[label=(\roman*)]
\item If $m\leq 2n-3$ then $G$ is isomorphic to $H_{n,m}$.
\item If $2n-3<m \leq 3n-6$ then $G$ is in $T_{n,m}^u$ and $\hat{G}$ is an almost-regular graph all whose components are paths.
\item If $3n-6<m\leq \binom{n}{2}$ then $G$ is in $T_{n,m}^u$ and $\lambda(\hat{G})$ is maximum. 
 \end{enumerate}
\end{proposition}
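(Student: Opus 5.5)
The proposition is essentially a repackaging of the three lemmas just established, so I will prove it by splitting into the three density regimes and invoking the corresponding lemma in each. Throughout, $G$ is a $d$-LMRTTG near $0$ in $T_{n,m}$ with $n\geq 5$, $m\geq 5$ and $d\geq 3$.

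For assertion (i), where $5\leq m\leq 2n-3$, Lemma~\ref{lemma:3-LMRTTG-near0-low} states that $H_{n,m}$ is the only $d$-LMRTTG near $0$ in $T_{n,m}$. Hence $G$ is isomorphic to $H_{n,m}$.

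For assertion (ii), where $2n-3<m\leq 3n-6$, the hypotheses of Lemma~\ref{lemma:3-LMRTTG-near0-medium} are met verbatim. That lemma gives $G\in T_{n,m}^u$ and shows that $\hat{G}$ is almost-regular with every component a path.

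For assertion (iii), where $3n-6<m\leq\binom{n}{2}$, we have $m\geq 3n-5$ because $m$ is an integer. The hypotheses of Lemma~\ref{lemma:3-LMRTTG-near0-high} therefore hold. That lemma yields $G\in T_{n,m}^u$ and shows that $\lambda(\hat{G})$ is maximum among all $\hat{H}$ with $H\in T_{n,m}^u$.

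The proof involves no real obstacle; the only points to check are the following.
\begin{itemize}
\item The regimes in (ii) and (iii) meet exactly, with no gap, since $3n-6<m$ and $3n-5\leq m$ are equivalent for integers.
\item In (iii), ``maximum edge-connectivity'' should be read as maximum over the class $T_{n,m}^u$. This is the class within which Lemma~\ref{lemma:minimize:n-2+k} compares the tuples $B^d$ lexicographically: every $\hat{G}$ there has $n-2$ vertices and $m-2n+3$ edges, so a larger $\lambda(\hat{G})$ postpones the first nontrivial cutset.
\end{itemize}
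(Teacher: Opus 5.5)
Your proof is correct and is the same as the paper's: each assertion follows by invoking Lemma~\ref{lemma:3-LMRTTG-near0-low}, Lemma~\ref{lemma:3-LMRTTG-near0-medium} and Lemma~\ref{lemma:3-LMRTTG-near0-high} in the corresponding density regime. Your explicit check that $3n-6<m$ is equivalent to $m\geq 3n-5$ for integers is a detail the paper leaves implicit.
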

\begin{proof}
The first assertion follows from Lemma~\ref{lemma:3-LMRTTG-near0-low}, whereas the second and third assertions follow from Lemmas~\ref{lemma:3-LMRTTG-near0-medium} and~\ref{lemma:3-LMRTTG-near0-high}, respectively.  
\end{proof}
Observe that Propositions~\ref{proposition:3-LMRTTG-near1} and~\ref{proposition:d-LMRTTG-near0}
impose compatible structural restrictions on any
$3$-UMRTTG in $T_{n,m}$. We close this section with an exact characterization of the only $d$-UMRTTG in $T_{n,\binom{n}{2}-1}$ when $d\geq 3$. 
\begin{theorem}\label{theorem:almostcomplete}
Let $n$ and $d$ be integers such that $n\geq 4$ and $d\geq 3$. 
Then the two-terminal graph $Z_n$ defined by $K_n-v_3v_4$ is the only $d$-UMRTTG in $T_{n,\binom{n}{2}-1}$.  
\end{theorem}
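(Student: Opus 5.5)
The plan is to compare $Z_n$ directly with the other two members $X_n=K_n-st$ and $Y_n=K_n-sv_3$ of $T_{n,\binom{n}{2}-1}$. We show that $Z_n$ is $d$-stronger than both and then apply Lemma~\ref{lemma:dstrongimpliesdUMRTTG}. Uniqueness follows because, if $G$ is $d$-stronger than $H$, Remark~\ref{remark:local} gives a $\rho$ with $R^d_G(\rho)>R^d_H(\rho)$, so $H$ is not a $d$-UMRTTG.

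\emph{Step 1: $Z_n$ is $d$-stronger than $X_n$.} This is immediate from Lemma~\ref{lemma:st}. Apply it to $X_n$, which lacks $st$, with $e=v_3v_4$: the graph $X_n-v_3v_4+st$ is isomorphic to $Z_n$, so $Z_n$ is $d$-stronger than $X_n$.

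\emph{Step 2: $Z_n$ is $d$-stronger than $Y_n$.} Both graphs contain $K_n-sv_3-v_3v_4$ as a common subgraph $C$, so $Z_n=C+sv_3$ and $Y_n=C+v_3v_4$. I would build an injection from $d$-pathsets of $Y_n$ with $i$ edges to $d$-pathsets of $Z_n$ with $i$ edges:
\begin{itemize}
\item Fix a spanning subgraph $S$ of $Y_n$. If $v_3v_4\notin S$, map $S$ to itself; it is already a $d$-pathset of $Z_n$.
\item If $v_3v_4\in S$ and $S-v_3v_4+sv_3$ is a $d$-pathset, map $S$ to that swapped graph.
\item If both $S$ and the swap are $d$-pathsets, injectivity is automatic, since the swap is the only map touching the swap class. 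The problem is the $S$ whose swap fails.
\end{itemize}
The cleaner route is to compare pairs. For each subset $F\subseteq E(C)$, let $a$ record whether $F+v_3v_4$ is a $d$-pathset and $b$ whether $F+sv_3$ is. It suffices to show $[F+v_3v_4 \text{ is a } d\text{-pathset}]\le[F+sv_3\text{ is a } d\text{-pathset}]$ for every $F$, possibly after a bijection $\sigma$ on subsets of $E(C)$ that preserves size.

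\emph{Choice of bijection.} I would take $\sigma$ induced by the automorphism of $C$ exchanging $s$ and $v_4$. Check that it fixes $C$: vertex $v_3$ is adjacent in $C$ to neither $s$ nor $v_4$, and $s$ and $v_4$ are otherwise universal. This swap sends $sv_3\leftrightarrow v_4v_3$ and maps $F$ to $\sigma(F)$. It does not preserve terminals, though: $s$ moves to $v_4$. So this needs care. Instead I would compare directly. Suppose $F+v_3v_4$ contains a short $s$–$t$ path $P$ using $v_3v_4$, say $s\cdots v_4v_3\cdots t$ or $s\cdots v_3v_4\cdots t$. In $C$, vertex $s$ is adjacent to $v_4$, so shortcuts are available, but only if those edges lie in $F$. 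This is exactly the main obstacle: the pointwise inequality can fail for particular $F$. An example is $F=\{sv_5,v_5v_4,v_3t\}$: $F+v_3v_4$ contains the path $s\,v_5\,v_4\,v_3\,t$, while $F+sv_3$ contains $s\,v_3\,t$, which is fine here. The genuine failure occurs for $F=\{sv_5,v_5v_3\}\cup\{v_4t\}$, where $F+v_3v_4$ has the path $s\,v_5\,v_3\,v_4\,t$ but $F+sv_3$ has no $s$–$t$ path.

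\emph{Handling the bad sets.} I would therefore set up a counting argument. Let $\mathcal{B}$ be the bad sets, where the path must use $v_3v_4$, and $\mathcal{G}$ the good sets, where $F+sv_3$ works but $F+v_3v_4$ does not. Then I would exhibit a size-preserving injection $\mathcal{B}\to\mathcal{G}$. The natural candidate is the automorphism of $C$ fixing $s$ and $t$ and swapping the roles of the vertices so that the segment through $v_3v_4$ maps to a segment through $sv_3$. Concretely, map a bad $F$ to a set in which $v_3$'s attachment to the rest is rerouted: an edge $v_3x$ in $F$ becomes $v_4x$ and vice versa, and the path $\cdots v_3 v_4\cdots t$ becomes $s\,v_3\,\cdots$. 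I expect verifying that this map is well-defined and injective, and that it preserves the distance bound $d\ge3$, to be the main technical step.

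\emph{Strictness.} At least one strict inequality is easy. For $i=2$, $Z_n$ contains the $2$-path $s\,v_3\,t$ and $Y_n$ does not, and every other $2$-path $s\,v_j\,t$ with $j\ge4$ lies in both graphs. Hence $N_2^d(Z_n)=n-2>n-3=N_2^d(Y_n)$.
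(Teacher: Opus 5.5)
Your overall frame is the same as the paper's, and those parts are correct: Lemma~\ref{lemma:dstrongimpliesdUMRTTG}, Step~1 via Lemma~\ref{lemma:st}, strictness at $i=2$, and the reduction of Step~2 to a size-preserving injection $\mathcal{B}\to\mathcal{G}$. One slip: $N_2^d(Z_n)=\binom{n}{2}-2+(n-2)$, because the two-edge sets containing $st$ also count; only the difference $1$ matters.

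The genuine gap is that this injection is never constructed, and it is the whole content of the theorem beyond Lemma~\ref{lemma:st}. Neither candidate you name can supply it.
\begin{itemize}
\item \emph{Automorphisms of $C$.} Every automorphism of $C$ fixes $v_3$, its unique vertex of degree $n-3$. One that also fixes $s$ must fix $v_4$, the other non-neighbor of $v_3$. So it merely permutes $v_5,\dots,v_n$ and cannot move sets through $v_3v_4$ to sets through $sv_3$.
\item \emph{The rerouting $v_3x\leftrightarrow v_4x$.} In a bad $F$, the short path of $F+v_3v_4$ must run $P_1\,v_3v_4\,P_2$; the reverse orientation would give $s\,v_3\cdots t$ in $F+sv_3$. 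Hence $sv_4\notin F$, since otherwise $s\,v_4\,P_2$ would be a short path inside $F$. On such $F$ your rerouting is just the relabeling $v_3\leftrightarrow v_4$. Its image $F'$ therefore has both $F'+v_3v_4$ and $F'+sv_3$ as $d$-pathsets, the latter being the relabeled $F+sv_4$. But the plain swap already sends $F'+v_3v_4$ to $F'+sv_3$, so the two maps collide. For example, your $F=\{sv_5,v_5v_3,v_4t\}$ goes to $F'=\{sv_5,v_5v_4,v_3t\}$.
\end{itemize}
Consequently Step~2 is unproved for $d\ge 4$. For $d\ge n-1$ you could fall back on Theorem~\ref{theorem:almostcomplete1}, but nothing covers $4\le d\le n-2$.

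For $d=3$ you are in fact done. A path of length at most $3$ through $v_3v_4$ must be $s\,v_4\,v_3\,t$, and its swap contains $s\,v_3\,t$. So $\mathcal{B}=\emptyset$, and your two-case map is already an injection.

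For comparison, the paper handles the bad sets with a third case: delete $v_3v_4$, add $sv_3$, and replace every edge $sv_j$ of $H$ by $v_4v_j$. Then $s$ has degree one in the image, which excludes collisions with the plain swap. The path $P=P_1\,v_3v_4\,P_2$ reroutes to $s\,v_3$, then $P_1$ minus its first edge $sv_j$ read backwards, then $v_jv_4$ and $P_2$; this path has the same length.

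Be warned that this map is not injective as written either. Take $d\ge 4$ and $n\ge 6$, with $H_a=\{sv_5,v_5v_3,v_3v_4,v_4t,v_4v_6\}$ and $H_b=\{sv_5,sv_6,v_5v_3,v_3v_4,v_4t\}$. Both fall into the third case, and both are sent to $\{sv_3,v_3v_5,v_4v_5,v_4v_6,v_4t\}$. So the claimed identity $\psi_i(\varphi_i(H))=H$ fails for one of them.

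Importing the paper's step therefore does not close your gap. What is missing is an injection $\mathcal{B}\to\mathcal{G}$ that also records which edges at $v_4$ came from $s$, or some other counting argument showing $|\mathcal{B}|\le|\mathcal{G}|$ at every size.
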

\begin{proof}
Let $n$ and $d$ be integers such that $n\geq 4$ and $d\geq 3$. Recall that $T_{n,\binom{n}{2}-1}=\{X_n,Y_n,Z_n\}$, 
where $X_n=K_n-st$, $Y_n=K_n-sv_3$, and $Z_n=K_n-v_3v_4$. By Lemma~\ref{lemma:dstrongimpliesdUMRTTG}, it suffices to prove that $Z_n$ is $d$-stronger than both $X_n$ and $Y_n$. As $st$ is not in $X_n$ but $v_3v_4$ is in $X_n$, Lemma~\ref{lemma:st} 
implies that $Z_n=X_n-v_3v_4+st$ is $d$-stronger than $X_n$. It suffices to prove that $Z_n$ is $d$-stronger than $Y_n$. 
As $Z_n$ includes all $2$-paths in $Y_n$ as well as the path $sv_3,v_3t$, 
we obtain that $N_2^d(Z_n)=N_2^d(Y_n)+1$. Let $i \in \{1,2,\ldots,\binom{n}{2}-1\}$. Define, for each $G$ in $T_{n,m}$, the family $\mathcal{P}_i^d(G)$ of all $d$-pathsets of $G$ with $i$ edges. 
Now, define the function $\varphi_i: \mathcal{P}_i^d(Y_n) \to \mathcal{P}_i^d(Z_n)$ 
as follows,
\begin{equation*}
\varphi_i(H) = 
\begin{cases}
        H, \text{ if } v_3v_4 \notin H\\
        H-v_3v_4+sv_3, \text{ if } v_3v_4 \in H   \text{ and } H-v_3v_4+sv_3 \in \mathcal{P}_{i}^d(Z_n)\\
        H-v_3v_4+sv_3-\{sv_i: sv_i\in H\}+\{v_4v_i:sv_i\in H\},      
        \text{ if otherwise}.       
\end{cases}        
\end{equation*}

It suffices to prove that $\varphi_i(\mathcal{P}_i^d(Y_n))\subseteq \mathcal{P}_i^d(Z_n)$ and that $\varphi_i$ is injective.
In fact, this implies that $N_i^d(Y_n)=|\mathcal{P}_i^d(Y_n)|= |\varphi_i(\mathcal{P}_i^d(Y_n))|
\leq |\mathcal{P}_i^d(Z_n)|=N_i^d(Z_n)$, and the theorem follows since $N_2^{d}(Y_n)<N_2^d(Z_n)$. First, we will prove that $\varphi_i(\mathcal{P}_i^d(Y_n))$ is in $\mathcal{P}_i^d(Z_n)$. Let $H$ be any two-terminal graph in  $\mathcal{P}_i^d(Y_n)$. Observe that, in each case, $\varphi_i(H)$ 
has precisely as many edges as $H$ and all edges in 
$\varphi_i(H)$ belong to $Z_n$. Therefore, $\varphi_i(H)$ 
is a spanning subgraph of $Z_n$ with precisely $i$ edges. If either $v_3v_4\notin H$ or $v_3v_4\in H$ 
but $H-v_3v_4+sv_3\in \mathcal{P}_i^d(Z_n)$, then $\varphi(H)$ is in $\mathcal{P}_i^d(Z_n)$. 
If otherwise, $v_3v_4\in H$ and $H-v_3v_4+sv_3 \notin \mathcal{P}_i^d(Z_n)$. 
Let $P$ be any path in $H$ with at most $d$ edges joining $s$ and $t$. As $H-v_3v_4+sv_3$ is not in $\mathcal{P}_{i}^d(Z_n)$, the edge $v_3v_4$ is in $P$ 
and $P-v_3v_4$ is the union of 
two paths $P_1$ and $P_2$, 
where the endpoints of $P_1$ 
are $s$ and $v_3$ while the endpoints of $P_2$ are $v_4$ and $t$. Let $v_i$ be the only vertex in $P_1$ that is adjacent to $s$, and let $P_1'$ be the subpath in $P_1$ whose endpoints are $v_i$ and $v_3$. By the definition of $\varphi_i$, 
all edges in $P$ are in $\varphi_i(H)$ except 
for $sv_i$ and $v_3v_4$ which are replaced 
by $v_4v_i$ and $sv_3$, respectively. 
Let $P'$ be the union of $sv_3$ followed by $P_1'$, then $v_iv_4$ and finally $P_2$. The endpoints of $P'$ are the terminals $s$ and $t$. 
As $P'$ is in $\varphi(H)$ and $P'$ has precisely as many edges as $P$, we conclude that $d_{\varphi(H)}(s,t)\leq d$. In either case, $\varphi_i(H)$ is in $\mathcal{P}_i^d(Z_n)$, as required. 
It remains to prove that $\varphi_i$ is injective. 
Define $\psi_i:\varphi_i(\mathcal{P}_i^d(Y_n))\to \mathcal{P}_i^d(Y_n)$ as follows:
\begin{equation*}
\psi_i(G)=
\begin{cases}
        G, 
         \text{ if } sv_3 \notin G\\
        G-sv_3+v_3v_4,
         \text{ if } sv_3 \in G \text{ and } G-sv_3+v_3v_4 \in \mathcal{P}_i^d(Y_n)\\
        G-sv_3+v_3v_4-\{v_4v_j: v_4v_j\in G\}
        +\{sv_j: v_4v_j\in G\},
         \text{ if otherwise.}
\end{cases}
\end{equation*}
By construction, $\psi_i(\varphi_i(H))=H$ for every
$H\in \mathcal{P}_i^d(Y_n)$. Hence $\varphi_i$ is injective, and the theorem follows.
\end{proof}

\section{Nonexistence of d-UMRTTGs when d is greater than 3}\label{section:contrast}
We will show in this section that the nonexistence results stated in 
Section~\ref{section:background} are inherited when the distance constraint $d$ is greater than $3$. The key results are Lemma~\ref{lemma:inherit} and Lemma~\ref{lemma:distances}, whose proofs are elementary.
\begin{lemma}\label{lemma:inherit}
Let $d$ be any positive integer and let $G$ be any two-terminal graph in $T_{n,m}$ such that the length of each of its paths joining its terminals is at most $d$. Then, $N_i^d(G)=N_i(G)$ 
for each $i\in \{1,2,\ldots,m\}$. In particular, $R_G^d(\rho)=R_G(\rho)$ for every $\rho$ in $[0,1]$. 
\end{lemma}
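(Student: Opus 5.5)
The plan is to show directly that, under the hypothesis, a spanning subgraph of $G$ is a $d$-pathset exactly when it is a pathset in the unconstrained sense. Recall that the unconstrained quantities $N_i(G)$ are, by the convention of Section~\ref{section:background}, the numbers $N_i^{d'}(G)$ for any $d'\geq n-1$. Since a path in a graph on $n$ vertices has length at most $n-1$, these count the spanning subgraphs with $i$ edges that contain some path joining $s$ and $t$.

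First, fix $i\in\{1,2,\ldots,m\}$ and a spanning subgraph $H$ of $G$ with $i$ edges.

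\emph{Forward inclusion.} If $H$ is a $d$-pathset, it contains a path of length at most $d$ joining the terminals. In particular it contains some path joining them, so it is counted by $N_i(G)$.

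\emph{Reverse inclusion.} Suppose $H$ contains a path $P$ joining $s$ and $t$. Then $P$ is also a path of $G$ joining its terminals. By hypothesis every such path of $G$ has length at most $d$, so $P$ has length at most $d$ and $H$ is a $d$-pathset.

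The two families therefore coincide, and $N_i^d(G)=N_i(G)$ for every $i$.

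Finally, I substitute into equation~\eqref{equation:poly}, once with superscript $d$ and once with $d'\geq n-1$. Since the coefficients agree, this gives $R_G^d(\rho)=R_G(\rho)$ for every $\rho\in[0,1]$.

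The only point that needs care is notational. One must confirm that $N_i(G)$ is really the count of subgraphs with \emph{some} terminal-joining path, since it is defined through $d'\geq n-1$. This holds because every path has at most $n-1$ edges, so no genuine obstacle is expected.
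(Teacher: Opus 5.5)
Your proposal is correct and follows essentially the same argument as the paper: every $d$-pathset is trivially a pathset, and conversely any terminal-joining path in a spanning subgraph $H$ is a terminal-joining path of $G$, so it has length at most $d$; equation~\eqref{equation:poly} then gives $R_G^d(\rho)=R_G(\rho)$. Your added check that $N_i(G)=N_i^{d'}(G)$ with $d'\geq n-1$ really counts subgraphs containing \emph{some} terminal-joining path is a small clarification that the paper leaves implicit.
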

\begin{proof}
Let $G$ be any two-terminal graph satisfying the conditions of the statement, and let $i$ be any integer in $\{1,2,\ldots,m\}$. It suffices to prove that $N_i^d(G)=N_i(G)$. By definition, 
$N_i^d(G)\leq N_i(G)$. Consider any spanning subgraph $H$ of $G$ composed of $i$ edges such that there exists some path in $H$ joining the terminals of $G$. By assumption, the length of such a path is at most $d$ thus $H$ is also a $d$-pathset of $G$ and $N_i^d(G)\geq N_i(G)$. Therefore, $N_i^d(G)=N_i(G)$. Equation~\eqref{equation:poly} gives that 
$R_G^d(\rho)=R_G(\rho)$ for every $\rho$ in $[0,1]$, and the lemma follows. 
\end{proof}

\begin{lemma}\label{lemma:distances}
Let $G$ be any two-terminal graph in $T_{n,m}$ and let $d$ be any positive integer. Then, $N_i^d(G)=N_i(G)$ for each $i\in \{1,2,\ldots,d\}$.    
\end{lemma}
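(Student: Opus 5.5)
Fix $i\in\{1,2,\ldots,d\}$. Since every $d$-pathset is in particular a pathset, we always have $N_i^d(G)\leq N_i(G)$, exactly as in the proof of Lemma~\ref{lemma:inherit}. So the whole argument reduces to the reverse inequality: every spanning subgraph $H$ of $G$ with exactly $i$ edges that contains some path joining $s$ and $t$ is already a $d$-pathset.

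To show this, take any path $P$ in $H$ joining the terminals. We may choose $P$ to be a shortest one, although any simple path will do. As $P$ is a subgraph of $H$, its edges form a subset of $E(H)$, so the length of $P$ is at most $|E(H)|=i\leq d$. Hence $H$ contains a path of length at most $d$ joining $s$ and $t$, which means $H$ is a $d$-pathset with $i$ edges.

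The identity map therefore injects the pathsets of size $i$ into the $d$-pathsets of size $i$. This gives $N_i(G)\leq N_i^d(G)$, and combined with the first inequality, $N_i^d(G)=N_i(G)$. There is no real obstacle here. The only point to state explicitly is that a path contained in an $i$-edge subgraph has at most $i$ edges. The argument is purely a counting of edges and does not depend on $n$, $m$, or the structure of $G$.
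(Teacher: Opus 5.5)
Your proof is correct and follows the paper's argument exactly: the inequality $N_i^d(G)\leq N_i(G)$ holds by definition, and the reverse holds because any $s$--$t$ path inside a spanning subgraph with $i$ edges has at most $i\leq d$ edges.
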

\begin{proof}
Let $G$ be any two-terminal graph in $T_{n,m}$ and 
let $d$ and $i$ be positive integers such that $i\in \{1,2,\ldots,d\}$. 
By definition, $N_i^d(G)\leq N_i(G)$. Let $H$ be any spanning subgraph of $G$ composed of $i$ edges 
joining its terminals. Clearly, each path in $H$ joining its terminals has at most $|E(H)|=i$ edges thus $N_i^d(G)\geq N_i(G)$. 
Therefore, $N_i^d(G)=N_i(G)$, and the lemma follows.
\end{proof}

Now we can determine the only $d$-LMRTTG near $1$ in nonempty classes $T_{n,m}$ such that $d\geq 4$.
\begin{lemma}\label{lemma:d4}
Let $d$, $n$ and $m$ be integers such that $d\geq 4$, $n\geq 5$, and $m\geq 5$. All the following assertions hold:
\begin{enumerate}[label=(\roman*)]
    \item\label{it1:low} If $m\leq 2n-3$ then $H_{n,m}$ is the only $d$-LMRTTG near $1$ in $T_{n,m}$.
    \item\label{it2:medium} If $2n-3<m\leq 3n-6$ then $A_{n,m-2n+3}$ is the only $d$-LMRTTG near $1$ in $T_{n,m}$.
    \item\label{it3:high} If $3n-5\leq m\leq \binom{n}{2}-2$ and $G$ is a $d$-LMRTTG near $1$ in $T_{n,m}$ then $\hat{G}$ is not almost-regular. 
\end{enumerate}
\end{lemma}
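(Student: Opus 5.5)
The plan is to reduce everything to the unconstrained case through Lemma~\ref{lemma:distances}. By Lemma~\ref{lemma:existenceLMRTTG}, the $d$-LMRTTGs near $1$ are exactly $T_{n,m}^d(m)$. These are produced by the filtration $T_{n,m}^d(1)\supseteq T_{n,m}^d(2)\supseteq\cdots$, where stage $i+1$ retains the graphs maximizing $N_{i+1}^d$. Since $d\geq 4$, Lemma~\ref{lemma:distances} gives $N_i^d(G)=N_i(G)$ for every $G\in T_{n,m}$ and every $i\in\{1,2,3,4\}$. By induction on $i$, this yields $T_{n,m}^d(i)=T_{n,m}(i)$ for $i\leq 4$: the first four stages of the constrained and unconstrained filtrations coincide. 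So whenever the unconstrained filtration has already become a single graph, or has acquired a structural property, by stage $4$, the same holds for the constrained one. Since $T_{n,m}^d(m)\subseteq T_{n,m}^d(4)$ and $T^d_{n,m}(m)$ is nonempty, the conclusion then transfers.

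For \ref{it1:low}, I would simply invoke Lemma~\ref{lemma:3-LMRTTG-near1-low}, which already works at stage $3$, and check that its argument uses only $N_1,N_2,N_3$. Alternatively I would rerun it directly: $T^d(1)$ forces $st$, $T^d(2)$ forces $\lfloor (m-1)/2\rfloor$ edge-disjoint $2$-paths, and $T^d(3)$ places the spare edge between two common neighbours of $s$ and $t$. This yields $T_{n,m}^d(3)=\{H_{n,m}\}$ for all $d\geq 3$.

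For \ref{it2:medium}, I would argue that $T_{n,m}(2)=T_{n,m}^u$ and $T_{n,m}(3)=T^u_{n,m}$. The latter holds because $N_3$ counts $3$-paths, and each of the $m-2n+3$ edges of $\hat G$ gives exactly two of them. I would then compute $N_4$ on $T^u_{n,m}$ as in the formula quoted from Xie et al.: up to terms independent of $G$, it equals $2p_3(\hat G)$ plus contributions from $4$-paths $s\,x\,y\,z\,t$, but every $4$-path requires two adjacent edges of $\hat G$, so the count is again governed by $p_3(\hat G)$. Maximizing $p_3(\hat G)$ over graphs with $m-2n+3\leq n-3$ edges gives a star, that is, $\hat G=K_{1,r}\cup\overline{K}$, which is exactly $A_{n,m-2n+3}$. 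Hence $T_{n,m}(4)=\{A_{n,m-2n+3}\}$, which is consistent with Lemma~\ref{lemma:LMRTTG1}\ref{item2}, and the transfer gives \ref{it2:medium}. The main obstacle is justifying that the star is the \emph{unique} maximizer of $p_3$ when the number of edges is at most $n-3$. For this I would use $p_3=\sum\binom{d_i}{2}$ together with the bound $\sum\binom{d_i}{2}\leq\binom{e}{2}$, which is attained for $e\geq 4$ only by a star (a triangle ties when $e=3$). That tie has to be handled using $m\geq 5$ and, if necessary, a stage-$4$ count of the $4$-paths.

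For \ref{it3:high}, I would use the description given for Xie et al.: $T_{n,m}(4)$ consists of graphs in $T^u_{n,m}$ whose $\hat G$ maximizes $p_3$, and none of the maximizers identified by Byer is almost-regular. That description is stated for $3n-6<m\leq\binom n2-2$, which matches $3n-5\leq m$. Therefore $T_{n,m}^d(m)\subseteq T^d_{n,m}(4)=T_{n,m}(4)$, and every $d$-LMRTTG near $1$ has $\hat G$ not almost-regular. The small range of $n$ implicit in Xie et al.'s hypothesis $n\geq 11$ would need to be noted as an assumption or checked separately.
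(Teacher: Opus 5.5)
Your treatment of (i) and (iii) is sound. For (i) your route differs slightly from the paper's. Lemma~\ref{lemma:distances} gives $N_i^d=N_i^3$ for $i\le 3$, so Lemma~\ref{lemma:3-LMRTTG-near1-low} already yields $T^d_{n,m}(3)=\{H_{n,m}\}$. The paper instead combines Lemma~\ref{lemma:LMRTTG1}\ref{item1} with Lemma~\ref{lemma:inherit}. Part (iii) is argued exactly as in the paper. Your remark that the input from Xie et al.\ requires $n\ge 11$ is a fair caveat, and it applies equally to the paper's proof.

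\textbf{The gap is in (ii), at $m=2n$.} Here $|E(\hat G)|=3$, and this case lies in the range for every $n\ge 6$. You flag the tie with the triangle, but the resolution you propose cannot work. The graph in $T^u_{n,2n}$ with $\hat G=K_3\cup\overline{K_{n-5}}$ also has $p_3(\hat G)=3$. On $T^u_{n,m}$, $N_4$ depends on $\hat G$ only through $p_3(\hat G)$, so no stage-$4$ count separates it from the star, and $m\ge 5$ plays no role. Hence $T_{n,2n}(4)$ is not $\{A_{n,3}\}$. The tie is broken only at stage $5$, which is exactly where your transfer principle gives nothing for $d=4$, since $N_5^4\neq N_5$ in general.

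\textbf{The missing observation.} Neither tied candidate has an $s$--$t$ path of length more than $4$, because neither $K_{1,3}$ nor $K_3$ contains a path with three edges. So Lemma~\ref{lemma:inherit} gives $N_i^d=N_i$ for both graphs and all $i$. The constrained and unconstrained filtrations therefore coincide from stage $4$ on, and Lemma~\ref{lemma:LMRTTG1}\ref{item2} finishes. Alternatively, compute directly. The two graphs have the same numbers of $5$-edge pathsets at distance $1,2,3$. At distance $4$ the star gives $6(m-9)$ and the triangle gives $6(m-10)$: for a $4$-path $s\,x\,y\,z\,t$ in the triangle, the chord $xz$ would create the $3$-path $s\,x\,z\,t$, so it forbids one more fifth edge.

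\textbf{The paper's route.} The paper sidesteps the filtration entirely. Put $A=A_{n,m-2n+3}$; it is the unique LMRTTG near $1$, and all its $s$--$t$ paths have length at most $4$. Hence $R_A^d=R_A>R_G\ge R_G^d$ near $1$ for every $G\ne A$. This covers all $m$ in the range uniformly, without recomputing Bertrand et al.'s classification.

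\textbf{A minor point.} In Xie et al.'s formula for $N_4$, the term $2p_3(\hat G)$ \emph{is} the number of $4$-paths, not an addition to it. The distance-$3$ contribution $2(m-2n+3)(m-6)$ is constant. Your conclusion that one must maximize $p_3(\hat G)$ is unaffected.
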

\begin{proof}
Let us prove each of the assertions separately.
\begin{enumerate}[label=(\roman*)]
    \item Let $G$ be any two-terminal graph in $T_{n,m}$ that is not isomorphic to $H_{n,m}$. 
    By Lemma~\ref{lemma:LMRTTG1}\ref{item1}, the only LMRTTG near $1$ in $T_{n,m}$ is $H_{n,m}$. Then, by Remark~\ref{remark:uniqueness}, there exists $\delta>0$ such that for every $\rho$ in $(1-\delta,1)$ it holds that $R_{H_{n,m}}(\rho)>R_G(\rho)$. Observe that the length of each path joining $s$ and $t$ in $H_{n,m}$ is at most $3$. As $d\geq 4$, Lemma~\ref{lemma:inherit} gives that $R_{H_{n,m}}^d(\rho)=R_{H_{n,m}}(\rho)$ for every $\rho$ in $[0,1]$. As $N_i(G)\geq N_i^d(G)$ for each $i\in \{1,2,\ldots,m\}$, equation~\eqref{equation:poly} gives that $R_G(\rho)\geq R_G^d(\rho)$ for every $\rho$ in $[0,1]$. Then, $R_{H_{n,m}}^d(\rho)=R_{H_{n,m}}(\rho)>R_G(\rho)\geq R_G^d(\rho)$ for every $\rho$ in $(1-\delta,1)$. As $G$ is any two-terminal graph in $T_{n,m}-\{H_{n,m}\}$, we obtained that $H_{n,m}$ is the only $d$-LMRTTG near $1$ in $T_{n,m}$, as required.  
    \item  By Lemma~\ref{lemma:LMRTTG1}\ref{item2}, the only LMRTTG near $1$ in $T_{n,m}$ is $A_{n,m-2n+3}$. Additionally, the length of each path joining $s$ and $t$ in $A_{n,m-2n+3}$ is at most $4$. The proof is analogous to~\ref{it1:low}. 
    \item Let $G$ be any two-terminal graph in $T_{n,m}$ and let $i$ be any integer in $\{1,2,3,4\}$. As $d\geq 4$, Lemma~\ref{lemma:distances} gives that $N_i^d(G)=N_i(G)$. As a consequence, $T_{n,m}^4(i)=T_{n,m}(i)$ for each $i\in \{1,2,3,4\}$. During the proof of Theorem~\ref{theorem:Xie}\ref{teo-not-regular}, Xie et al. proved that each $G$ in $T_{n,m}(4)$ is in $T_{n,m}^u$ and $\hat{G}$ is not almost-regular. Consequently, the set $T_{n,m}^4(4)$ consists of two-terminal graphs $G$ in $T_{n,m}^u$ such that $\hat{G}$ is not almost-regular. As $T_{n,m}^4(m)\subseteq T_{n,m}^4(4)$, the result follows from Lemma~\ref{lemma:existenceLMRTTG}. \qedhere
\end{enumerate}
\end{proof}

We are in a position to prove Theorem~\ref{theorem:nonexistenced>3}
\begin{theorem}\label{theorem:nonexistenced>3}
Let $n$ and $d$ be any pair of integers such that $n\geq 11$ and $d\geq 4$. 
No $d$-UMRTTG exists in $T_{n,m}$ when $20\leq m \leq 3n-9$ or when $3n-5\leq m \leq \binom{n}{2}-2$.
\end{theorem}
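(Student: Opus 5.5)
The argument splits into the two ranges of $m$ and in each resolves the dichotomy of Remark~\ref{remark:uniqueness}, or its analogue comparing near $0$ with near $1$.

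\textbf{First range, $20\leq m\leq 3n-9$.} Since $m\leq 3n-9\leq 3n-6$, Lemma~\ref{lemma:d4}\ref{it1:low} or \ref{it2:medium} gives a unique $d$-LMRTTG near $1$. This is $L=H_{n,m}$ when $m\leq 2n-3$, and $L=A_{n,m-2n+3}$ when $2n-3<m\leq 3n-9$. Any $d$-UMRTTG would in particular be a $d$-LMRTTG near $1$, so it would have to be $L$. It therefore suffices to exhibit one $H\in T_{n,m}$ and one $\rho_0$ with $R_H^d(\rho_0)>R_L^d(\rho_0)$.

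\begin{enumerate}[label=(\roman*)]
\item Take $H=G_{n,m}=A_{n',r'}\cup\overline{K_{n-n'}}$ from Lemma~\ref{lemma:half}, and take $\rho_0=1/2$.
\item Every path joining the terminals in $A_{n',r'}$ has length at most $4\leq d$. Adding isolated vertices creates no new paths, so Lemma~\ref{lemma:inherit} gives $R^d_{G_{n,m}}=R_{G_{n,m}}$.
\item Likewise, all terminal paths in $H_{n,m}$ have length at most $3$, and in $A_{n,m-2n+3}$ at most $4$. So $R_L^d=R_L$.
\item Lemma~\ref{lemma:half} then yields $R^d_{G_{n,m}}(1/2)=R_{G_{n,m}}(1/2)>R_L(1/2)=R_L^d(1/2)$, so no $d$-UMRTTG exists.
\end{enumerate}

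The one point to verify is that $G_{n,m}$ is really a graph in $T_{n,m}$ with $0\leq r'\leq n'-3$. This is part of Lemma~\ref{lemma:half}'s hypotheses as stated, so I will take it as given.

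\textbf{Second range, $3n-5\leq m\leq\binom{n}{2}-2$.} Here I compare behaviour near $0$ with behaviour near $1$.
\begin{enumerate}[label=(\roman*)]
\item By Lemma~\ref{lemma:3-LMRTTG-near0-high}, valid since $d\geq 3$, every $d$-LMRTTG near $0$ lies in $T_{n,m}^u$ and $\hat G$ has maximum edge-connectivity.
\item By the classical result~\cite{Harary-1962} quoted in the description of Xie et al.'s method, a graph of maximum edge-connectivity is almost-regular. So every $d$-LMRTTG near $0$ has $\hat G$ almost-regular.
\item By Lemma~\ref{lemma:d4}\ref{it3:high}, every $d$-LMRTTG near $1$ has $\hat G$ not almost-regular.
\item A $d$-UMRTTG must be a $d$-LMRTTG both near $0$ and near $1$, which is impossible by (ii) and (iii).
\end{enumerate}

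\textbf{Main obstacle.} The delicate step is (ii): that maximum edge-connectivity of $\hat G$ forces almost-regularity. Here $\hat G$ has $n-2$ vertices and $m-2n+3\geq n-2$ edges. The intended argument is that the maximum possible edge-connectivity equals $\lfloor 2(m-2n+3)/(n-2)\rfloor$, this value is attained by Harary-type graphs, and attaining it forces the minimum degree up to that value. One must check that this really forces all degrees within one of each other for the relevant parameter values, rather than merely forcing a large minimum degree. I will rely on the same citation Xie et al. used for Theorem~\ref{theorem:Xie}\ref{teo-regular}, noting that the hypothesis $n\geq 11$ matches theirs. The remaining steps are direct combinations of the lemmas already stated.
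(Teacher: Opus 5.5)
Your proof follows the paper's argument essentially line for line: Lemma~\ref{lemma:d4} with Lemmas~\ref{lemma:inherit} and~\ref{lemma:half} at $\rho=1/2$ in the first range, and incompatibility of near-$0$ and near-$1$ optimizers in the second. However, the step you flagged is a genuine gap, and the paper shares it: its proof reads ``almost-regular'' straight off Lemma~\ref{lemma:3-LMRTTG-near0-high}, which only gives maximum edge-connectivity. The implication ``maximum edge-connectivity $\Rightarrow$ almost-regular'' is false in exactly the range you need. Write $\nu=n-2$ and $\mu=m-2n+3$ for the order and size of $\hat G$, and let $\hat G$ be a cycle through all $\nu$ vertices plus two chords with a common endpoint. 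Then $m=3n-3$, the largest possible edge-connectivity is $\lfloor 2n/(n-2)\rfloor=2$, and $\hat G$ is $2$-edge-connected, yet its degrees are $4,3,3,2,\ldots,2$. Harary's theorem only says that the maximum equals $\lfloor 2\mu/\nu\rfloor$ and is attained. Attaining it forces $\delta(\hat G)\ge\lfloor 2\mu/\nu\rfloor$, which yields regularity when $\nu\mid 2\mu$ and only a minimum-degree bound otherwise, so the citation cannot carry step (ii). Your other caveat is also justified. With $n'=\lceil n/3\rceil+2$ as printed in Lemma~\ref{lemma:half}, $A_{n',r'}$ has at most $3n'-6\le n+2$ edges, so it cannot have $m$ edges once $m>n+2$. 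That definition is presumably a misprint for $n'=\lceil m/3\rceil+2$; this is a transcription issue, not a flaw in your logic.

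The cleanest repair avoids almost-regularity near $0$ altogether: it suffices to show that no $d$-LMRTTG near $1$ has $\lambda(\hat G)=\lfloor 2\mu/\nu\rfloor$.

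\begin{enumerate}[label=(\alph*)]
\item By Lemma~\ref{lemma:distances}, $T^d_{n,m}(4)=T_{n,m}(4)$. By Xie et al.'s formula for $N_4$, this set consists of the $G\in T^u_{n,m}$ with $\hat G$ maximizing $p_3$, hence (Byer) with $\hat G$ a threshold graph.
\item In a threshold graph on $\nu$ vertices, let $v$ be the last vertex added as isolated (the first vertex counting as isolated), and let $k$ be the number of vertices added after it, all of them dominating. Then $\delta(\hat G)\le d(v)=k$ and $2\mu\ge k(2\nu-k-1)$.
\item If $k\ge\nu-2$ then $\mu\ge\binom{\nu}{2}-1$, which is outside the range.
\item If $k\le 1$ then $\delta(\hat G)\le 1<2\le\lfloor 2\mu/\nu\rfloor$, because $\mu\ge\nu$.
\item If $2\le k\le\nu-3$, then $(k-1)\nu\ge k(k+1)$, since $\nu\ge 6$ and $\nu\ge k+3$. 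This is equivalent to $k(2\nu-k-1)\ge(k+1)\nu$, so $\lambda(\hat G)\le\delta(\hat G)\le k<\lfloor 2\mu/\nu\rfloor$.
\end{enumerate}

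Together with Lemma~\ref{lemma:3-LMRTTG-near0-high}, this gives the contradiction. That lemma's conclusion is sound, but its supporting Lemma~\ref{lemma:minimize:n-2+k} overlooks cutsets that delete both $sv$ and $vt$ for some $v$. For finite $d$, such cutsets can be nontrivial $d$-cutsets before index $n-1+\lambda(\hat G)$. The conclusion survives if you compare with a competitor whose $\hat G$ has vertex-connectivity $\lfloor 2\mu/\nu\rfloor$. These double deletions also make the other possible repair more delicate. That repair would push the lexicographic comparison to index $n-1+\lambda$, where each vertex of degree $\lambda$ contributes nontrivial cutsets.
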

\begin{proof}
Let $n$ and $d$ be any pair of integers such that $d\geq 4$ and $n\geq 11$. 
First, let us consider the case in which $20 \leq m \leq 3n-9$. 
Let $G_{n,m}$ be the two-terminal graph in $T_{n,m}$ defined in Lemma~\ref{lemma:half}. 
If $m\leq 2n-3$ then, by Lemma~\ref{lemma:d4}\ref{it1:low}, the only $d$-LMRTTG near $1$ in $T_{n,m}$ is $H_{n,m}$.  Lemma~\ref{lemma:half} gives that $R_{G_{n,m}}(1/2)>R_{H_{n,m}}(1/2)$. As both $G_{n,m}$ and $H_{n,m}$ satisfy the conditions of Lemma~\ref{lemma:inherit} we conclude that $R_{G_{n,m}}^d(1/2)>R_{H_{n,m}}^d(1/2)$ thus by Remark~\ref{remark:uniqueness} 
no $d$-UMRTTG exists. Now, if $2n-3<m\leq 3n-9$ then the reasoning is analogous. Lemma~\ref{lemma:d4}\ref{it2:medium} gives that $A_{n,m-2n+3}$ 
is the only $d$-LMRTTG near $1$ in $T_{n,m}$, while Lemma~\ref{lemma:half} asserts that $R_{G_{n,m}}^d(1/2)>R_{A_{n,m-2n+3}}^d(1/2)$ thus by Remark~\ref{remark:uniqueness} no $d$-UMRTTG exists.

Finally, if $3n-5\leq m \leq \binom{n}{2}-2$ then Lemma~\ref{lemma:d4}\ref{it3:high} gives that each $d$-LMRTTG $G$ near $1$ in $T_{n,m}$ satisfies that $\hat{G}$ is not almost-regular. However, Lemma~\ref{lemma:3-LMRTTG-near0-high} gives that $\hat{G}$ an is almost-regular graph. 
As each $d$-UMRTTG must be simultaneously $d$-LMRTTG near $0$ and $1$, no $d$-UMRTTG exists, and the theorem follows.
\end{proof}

Observe that we still do not know whether or not there exists $d$-UMRTTGs in $T_{n,m}$ when $n\geq 11$ and $m\in \{3n-6,3n-7,3n-8\}$. By Lemma~\ref{lemma:d4}\ref{it2:medium}, we know that the only $d$-LMRTTG near $1$ in $T_{n,m}$ when $m\in \{3n-6,3n-7,3n-8\}$ is $A_{n,n-3}$, $A_{n,n-4}$ and $A_{n,n-5}$, respectively. In the following, we will exhibit a two-terminal graph $B_{n,3n-8}$ in $T_{n,3n-8}$ such that $R_{B_{n,3n-8}}^d(1/2)>R_{A_{n,n-5}}^d(1/2)$ when $d\geq 5$ and $n\geq 11$. The existence or nonexistence of $d$-UMRTTG when $m\in \{3n-6,3n-7\}$ is left for future research. 

Let $G$ be any two-terminal graph in $T_{n,m}$, and let $d$ be any positive integer. 

Let $N^d(G)$ be the number of all $d$-pathsets of $G$, i.e., $N^d(G)=\sum_{i=1}^{m}N_i^d(G)$. Define $F^d(G)$ as the number of all spanning subgraphs of $G$ that are not 
$d$-pathsets. As there are precisely $2^m$ spanning subgraphs of $G$, the sum-rule gives that $N^d(G)+F^d(G)=2^m$. Equation~\eqref{equation:poly} yields,
\begin{equation*}
R_G^d(1/2)= \sum_{i=1}^{m}\frac{N_i^d(G)}{2^m}=\frac{N^d(G)}{2^m} = 1-\frac{F^d(G)}{2^m}.   
\end{equation*}
Therefore, if there exists $B_{n,3n-8}$ in $T_{n,3n-8}$ such that $F^d(B_{n,3n-8})<F^d(A_{n,n-5})$ then $R_{B_{n,3n-8}}^d(1/2)>R_{A_{n,n-5}}^d(1/2)$, 
and by Remark~\ref{remark:uniqueness} no $d$-UMRTTG exists in $T_{n,3n-8}$. We now define $B_{n,3n-8}$.
\begin{definition}
For each integer $n$ such that $n\geq 6$ we define $B_{n,3n-8}$ 
as  $A_{n,n-5}-sv_n-tv_n+v_3v_{n-1}+v_4v_5$. 
\end{definition}

The two-terminal graph $B_{n,3n-8}$ is depicted in Figure~\ref{bnr}. 

\begin{figure}
    \centering
\begin{tikzpicture}
\filldraw[black] (4.5,2) circle (2pt); 
\node[above] (s) at (4.5,2) {$v_2=t$}; 

\filldraw[black] (0,0) circle (2pt); 
\node[below] at (0,0) {$v_3$}; 
\filldraw[black] (1,0) circle (2pt); 
\node[below] at (1,0) {$v_4$}; 
\filldraw[black] (2,0) circle (2pt); 
\node[below] at (2,0) {$v_5$}; 

\filldraw[black] (2.6,0) circle (.5pt); 
\filldraw[black] (3,0) circle (.5pt); 
\filldraw[black] (3.4,0) circle (.5pt); 

\filldraw[black] (3.6,1) circle (.5pt); 
\filldraw[black] (3.8,1) circle (.5pt); 
\filldraw[black] (4,1) circle (.5pt); 
\filldraw[black] (3.6,-1) circle (.5pt); 
\filldraw[black] (3.8,-1) circle (.5pt); 
\filldraw[black] (4,-1) circle (.5pt); 

\filldraw[black] (4,0) circle (2pt); 
\node[below] at (4,0) {$v_{r+2}$}; 
\filldraw[black] (5,0) circle (2pt); 
\node[below] at (5,0) {$v_{r+3}$}; 
\filldraw[black] (6,0) circle (2pt); 
\node[below] at (6,0) {$v_{r+4}$}; 
\filldraw[black] (7,0) circle (2pt); 
\node[below] at (7,0) {$v_{r+5}$}; 

\filldraw[black] (7.6,0) circle (.5pt); 
\filldraw[black] (8,0) circle (.5pt); 
\filldraw[black] (8.4,0) circle (.5pt); 

\filldraw[black] (6,1) circle (.5pt); 
\filldraw[black] (6.2,1) circle (.5pt); 
\filldraw[black] (6.4,1) circle (.5pt); 
\filldraw[black] (6,-1) circle (.5pt); 
\filldraw[black] (6.2,-1) circle (.5pt); 
\filldraw[black] (6.4,-1) circle (.5pt); 

\filldraw[black] (9,0) circle (2pt); 
\node[below] at (9,0) {$v_{n-1}$}; 

\filldraw[black] (10,0) circle (2pt); 
\node[below] at (10,0) {$v_{n}$}; 

\filldraw[black] (4.5,-2) circle (2pt); 
\node[below] (s) at (4.5,-2) {$v_1=s$}; 

\draw (4.5,2) -- (0,0);
\draw (4.5,2) -- (1,0);
\draw (4.5,2) -- (2,0);
%\draw (4.5,1) -- (3,0);
\draw (4.5,2) -- (4,0);
\draw (4.5,2) -- (5,0);
\draw (4.5,2) -- (6,0);
\draw (4.5,2) -- (7,0);
\draw (4.5,2) -- (9,0);

\draw (4.5,-2) -- (0,0);
\draw (4.5,-2) -- (1,0);
\draw (4.5,-2) -- (2,0);
%\draw (4.5,1) -- (3,0);
\draw (4.5,-2) -- (4,0);
\draw (4.5,-2) -- (5,0);
\draw (4.5,-2) -- (6,0);
\draw (4.5,-2) -- (7,0);
\draw (4.5,-2) -- (9,0);

\draw (0,0) -- (1,0);
\draw (0,0) to[bend left] (2,0);
\draw (0,0) to[bend left] (4,0);
\draw (0,0) to[bend left] (5,0);
\draw (0,0) to[bend left] (6,0);
\draw (0,0) to[bend left] (7,0);
\draw (0,0) to[bend left] (9,0);

\draw (1,0) to[bend left] (2,0);

\filldraw[black] (1.2,.35) circle (.5pt); 
\filldraw[black] (1.3,.4) circle (.5pt); 
\filldraw[black] (1.4,.45) circle (.5pt); 

\draw (4.5,2) to[out=0,in=90] (9.5,0);
\draw (9.5,0) to[out=-90,in=0] (4.5,-2);
\end{tikzpicture}
    \caption{\centering The graph $B_{n,3n-8}$}
    \label{bnr}
\end{figure}

\begin{lemma}\label{lemma:FdeB}
For each integer $n$ such that $n\geq 6$, $F(B_{n,3n-8})=176\times 5^{n-6}-26 \times 4^{n-6}$.    
\end{lemma}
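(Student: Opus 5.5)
We count directly the spanning subgraphs of $B_{n,3n-8}$ that contain no path between $s$ and $t$; this count is $F(B_{n,3n-8})$ (with $d\geq n-1$). First we record the structure. Put $k=n-6$. In $B_{n,3n-8}$ the vertex $v_n$ is isolated and the edge $st$ is present. Every vertex $v_j$ with $3\leq j\leq n-1$ is adjacent to both terminals. The vertex $v_3$ is adjacent to all of $v_4,\ldots,v_{n-1}$, and the only other edge is $v_4v_5$. Counting gives $1+2(n-3)+(n-4)+1=3n-8$ edges, as required.

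Every failing subgraph must omit $st$, so we work in $B'=B_{n,3n-8}-st$. The graph $B'$ splits into three parts:
\begin{itemize}
\item the two edges $sv_3,tv_3$;
\item a \emph{gadget} on $\{v_4,v_5\}$ with the $7$ edges $sv_4,tv_4,v_3v_4,sv_5,tv_5,v_3v_5,v_4v_5$;
\item $k$ \emph{pendant units}, one for each $v_j$ with $6\leq j\leq n-1$, each made of the $3$ edges $sv_j,tv_j,v_3v_j$.
\end{itemize}
All these parts meet only in $s$, $t$ and $v_3$.

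The main step is to condition on the component $C$ of $v_3$ in the chosen subgraph. Let $\mathcal{S}$ be the failing subgraphs in which $C$ avoids $t$, and $\mathcal{T}$ those in which $C$ avoids $s$. A subgraph of $B'$ fails exactly when it lies in $\mathcal{S}\cup\mathcal{T}$ and also has no path avoiding $v_3$. By symmetry between $s$ and $t$, $F=2|\mathcal{S}|-|\mathcal{S}\cap\mathcal{T}|$. Since the parts interact only through $s,t,v_3$, both counts factor into a product over the parts.

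For $|\mathcal{S}|$ we treat $v_3$ as if merged with $s$. The edge $tv_3$ is absent and $sv_3$ is free, contributing a factor $2$. Each pendant unit must avoid joining $t$ to $\{s,v_3\}$. Of its $8$ edge choices, exactly those with $tv_j$ present and at least one of $sv_j,v_3v_j$ present are forbidden, which leaves $5$. For $|\mathcal{S}\cap\mathcal{T}|$, the component $C$ contains no terminal, so both $sv_3$ and $tv_3$ are absent. Each pendant unit then allows $4$ choices: $3$ when $v_3v_j$ is absent (anything except both $sv_j$ and $tv_j$), and $1$ when $v_3v_j$ is present (neither terminal edge). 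Hence
\begin{equation*}
F(B_{n,3n-8})=2\cdot 2\cdot g_{S}\cdot 5^{n-6}-g_{0}\cdot 4^{n-6},
\end{equation*}
where $g_S$ counts the subsets of the $7$ gadget edges with no $t$-path once $v_3$ is identified with $s$, and $g_0$ counts the subsets in which $v_3$ reaches no terminal and there is still no $s$–$t$ path.

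It remains to compute $g_S$ and $g_0$ by hand, and this finite gadget enumeration is the main obstacle, mostly bookkeeping. For $g_S$ we contract $v_3$ into $s$. The gadget then becomes $t$ joined to $v_4$ and $v_5$, the merged vertex $s$ joined to each of $v_4,v_5$ by two parallel edges, and the edge $v_4v_5$. We split on whether $v_4v_5$ is present: if it is, $\{v_4,v_5\}$ acts as one vertex; if not, $v_4$ and $v_5$ are handled independently. The target is $g_S=44$, so that $4g_S=176$. For $g_0$ we split on the $3$ states of the pair $(v_3v_4,v_3v_5)$ together with the status of $v_4v_5$, count the admissible terminal edges in each case, and expect $g_0=26$. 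Finally, a sanity check at $n=6$ (so $k=0$) should give $F=176-26=150$; this can be checked by a direct count on the $10$-edge graph to guard against slips in the gadget count.
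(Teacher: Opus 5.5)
Your argument is correct, and it takes a different route from the paper's.

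\textbf{The paper's route.} The paper first splits on whether $v_4v_5$ is present.
\begin{itemize}
\item When $v_4v_5$ is absent, it identifies $B_{n,3n-8}-v_4v_5$ with $A_{n-1,n-4}\cup K_1$ and imports the closed formula for $F(A_{n,r})$ from Bertrand et al.
\item When $v_4v_5$ is present, it encodes each vertex by its terminal-adjacency state $N,S,T$. It then conditions on the state of $v_3$, on how many of $v_4,v_5$ are adjacent to $v_3$, and on whether some neighbour of $v_3$ has state $S$ or $T$, and evaluates several binomial sums.
\end{itemize}

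\textbf{Your route.} You make a single inclusion--exclusion on which terminal the component of $v_3$ avoids. You then use that the parts are edge-disjoint and glued only along $\{s,t,v_3\}$, so every count factors. What remains is the local factors $5$ and $4$ per pendant unit and two $7$-edge gadget counts.

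\textbf{What each buys.} Your approach is self-contained. Applied to $A_{n,r}$ it reproves $F(A_{n,r})=3^{n-r-3}(4\cdot 5^r-4^r)$, since the units not attached to $v_3$ contribute a factor $3$ to both terms. It also avoids the binomial bookkeeping and explains why the answer has the shape $a\cdot 5^{n-6}-b\cdot 4^{n-6}$. The paper's approach trades this for reuse of an existing formula on half of the count.

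\textbf{Finish the gadget counts.} You left $g_S$ and $g_0$ as targets, but your proposed split on $v_4v_5$ settles them at once. With $v_4v_5$ absent, $v_4$ and $v_5$ contribute independently. With $v_4v_5$ present, $\{v_4,v_5\}$ acts as one vertex:
\begin{itemize}
\item for $g_S$, that block has four edges to the merged vertex and two to $t$, and one forbids having at least one of each;
\item for $g_0$, at most one of the three pairs of parallel edges to $s,t,v_3$ may be nonempty.
\end{itemize}
This gives
\[
g_S=5^2+(2^6-15\cdot 3)=25+19=44,\qquad g_0=4^2+(1+3\cdot 3)=16+10=26 .
\]
This agrees with the paper's split: $100\cdot 5^{n-6}-16\cdot 4^{n-6}$ without $v_4v_5$, and $76\cdot 5^{n-6}-10\cdot 4^{n-6}$ with it.

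\textbf{Justify the factorization.} Add one sentence for this step. Any path joining two vertices of $\{s,t,v_3\}$ (or joining $t$ to the merged vertex) decomposes into segments, each lying inside a single part. Hence the global disconnection conditions hold if and only if they hold part by part.
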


\begin{proof}

We partition the spanning subgraphs $H \subseteq B_{n,3n-8}$ in which $s$ and $t$ are disconnected according to whether the edge $v_4v_5$ belongs to $H$.
As $B_{n,3n-8}-v_4v_5$ is isomorphic to $A_{n-1,n-4}\cup K_1$, the number of spanning subgraphs $H$ of $B_{n,3n-8}$ not containing $v_4v_5$ in which $s$ and $t$ are disconnected equals $F(A_{n-1,n-4})$. By equation~\eqref{eq:F}, 
$F(A_{n-1,n-4})=4\times 5^{n-4}-4^{n-4}$.

We encode each spanning subgraph $H$ by assigning to every vertex 
$v$ in $\{v_3,\dots,v_{n-1}\}$ a state $s(v)$ describing its adjacency to $s$ and $t$ as follows,
\begin{itemize}
    \item $s(v)=B$ if both $sv$ and $vt$ belong to $H$.
    \item $s(v)=N$ if neither $sv$ nor $vt$ belongs to $H$;
    \item $s(v)=S$ if $sv\in H$ and $vt\notin H$;
    \item $s(v)=T$ if $sv\notin H$ and $vt\in H$;

\end{itemize}

We now count all spanning subgraphs $H$ of $B_{n,3n-8}$ including $v_4v_5$ while keeping $s$ and $t$ disconnected. Since $s$ and $t$ are required to remain disconnected in $H$, no vertex is assigned state $B$. We distinguish the three possible states of $v_3$, namely $N$, $S$, and $T$. The cases $S$ and $T$ are symmetric under the automorphism of $B_{n,3n-8}$ induced by exchanging $s$ and $t$, and consequently yield identical contributions. We will show how to construct 
all such spanning subgraphs $H$ meeting the aforementioned conditions 
when $s(v_3)=N$ and $s(v_3)=S$ separately. 
\begin{enumerate}
    \item If $s(v_3)=N$, then $H$ contains no path joining $s$ and $t$ if and only if the following conditions hold:
    \begin{enumerate}
        \item either $s(v)\in\{N,T\}$ for every vertex
        $v\in N_H(v_3)\cap\{v_4,v_5,\ldots,v_{n-1}\}$, or
        $s(v)\in\{N,S\}$ for every such vertex;
        
        \item $s(v)\in\{N,S,T\}$ for every vertex
        $v\in\{v_6,v_7,\ldots,v_{n-1}\}-N_H(v_3)$;
        
        \item $\{s(v_4),s(v_5)\}\neq\{S,T\}$.
    \end{enumerate}

    \item If $s(v_3)=S$, then $H$ contains no path joining $s$ and $t$ if and only if the following conditions hold:
    \begin{enumerate}
        \item $s(v)\in\{N,S\}$ for every vertex
        $v\in N_H(v_3)\cap\{v_4,v_5,\ldots,v_{n-1}\}$;
        
        \item $s(v)\in\{N,S,T\}$ for every vertex
        $v\in\{v_6,v_7,\ldots,v_{n-1}\}-N_H(v_3)$;
        
        \item $\{s(v_4),s(v_5)\}\neq\{S,T\}$.
    \end{enumerate}
\end{enumerate}

We now count the spanning subgraphs in each case separately. Define $
i=|N_H(v_3)\cap\{v_4,v_5\}|$ and
$j=|N_H(v_3)\cap\{v_6,v_7,\ldots,v_{n-1}\}|$. Thus, $i$ counts how many of the vertices in $\{v_4,v_5\}$ are adjacent to $v_3$ in $H$, while $j$ counts the neighbors of $v_3$ among $\{v_6,v_7,\ldots,v_{n-1}\}$. Observe that $i\in\{0,1,2\}$ and $j\in\{0,1,\ldots,n-6\}$.

First, assume that $s(v_3)=N$. If $i=0$, then the pair $(s(v_4),s(v_5))$ can be chosen in $7$ ways subject to the condition $\{s(v_4),s(v_5)\}\neq\{S,T\}$. Observe that, for each such choice, either every neighbor of $v_3$ has state in $\{N,S\}$ or every neighbor of $v_3$ has state in $\{N,T\}$. 
Fix one of these two possibilities. Then every vertex in $N_H(v_3)\cap\{v_6,v_7,\ldots,v_{n-1}\}$ has $2$ possible states, whereas every other vertex has $3$ possible states. As there are precisely $\binom{n-6}{j}$ possible neighbors of $v_3$ when $j$ is a fixed number in $\{0,1,\ldots,n-6\}$, the number of spanning subgraphs $H$ such that $s(v_3)=N$ and $i=0$ equals $\sum_{j=0}^{n-6}\binom{n-6}{j}2^j 3^{n-6-j}=5^{n-6}$. 

Since there are two possible choices for $s(v_3)$, this gives
$2\times 5^{n-6}$. However, the assignments in which every neighbor of $v_3$ has state $N$ are counted twice, once in each choice of $s(v_3)$. 
The number of such assignments equals
$\sum_{j=0}^{n-6}\binom{n-6}{j}3^{n-6-j}=4^{n-6}$. 
Therefore, the number of state assignments equals
$2\times 5^{n-6}-4^{n-6}$. Hence, the number of spanning subgraphs $H$ such that $s(v_3)=N$ and $i=0$ equals
$$
7\times (2\times 5^{n-6}-4^{n-6})
=
14\times 5^{n-6}-7\times 4^{n-6}.
$$

Now, assume that $s(v_3)=N$ and $i\in\{1,2\}$. Then $v_3$ is adjacent to $v_4$, to $v_5$, or to both vertices, yielding $3$ possibilities. Suppose that there exists a vertex in $N_H(v_3)\cap\{v_6,v_7,\ldots,v_{n-1}\}$ with state $S$. Then every neighbor of $v_3$ must have state in $\{N,S\}$. In particular, $s(v_4),s(v_5)\in\{N,S\}$, 
which gives $4$ possibilities for the pair $(s(v_4),s(v_5))$. Hence, this contributes
$$3\times 4\times \sum_{j=1}^{n-6}\binom{n-6}{j}(2^j-1)3^{n-6-j}=12\times (5^{n-6}-4^{n-6})$$
sub-cases. By symmetry, the same contribution arises when there exists a vertex in $N_H(v_3)\cap\{v_6,v_7,\ldots,v_{n-1}\}$ with state $T$. Finally, assume that every vertex in $N_H(v_3)\cap\{v_6,v_7,\ldots,v_{n-1}\}$ has state $N$. Then every vertex in 
$\{v_6,v_7,\ldots,v_n\}-N_H(v_3)$ has state in $\{N,S,T\}$
and the pair $(s(v_4),s(v_5))$ can be chosen in $7$ ways subject to the condition $\{s(v_4),s(v_5)\}\neq\{S,T\}$. Thus, this contributes
$$3\times 7\times\sum_{j=0}^{n-6}\binom{n-6}{j}1^j3^{n-6-j}=21\times 4^{n-6}$$
sub-cases. Therefore, the number of spanning subgraphs $H$ such that $s(v_3)=N$ and $i\in\{1,2\}$ equals
$$2\times (12\times (5^{n-6}-4^{n-6}))+21\times 4^{n-6}=24\times 5^{n-6}-3\times 4^{n-6}.$$

Consequently, the total number of spanning subgraphs $H$ such that $s(v_3)=N$ equals
$$(14\times 5^{n-6}-7\times 4^{n-6})+(24\times 5^{n-6}-3\times 4^{n-6})=38\times 5^{n-6}-10\times 4^{n-6}.$$

Assume now that $s(v_3)=S$. If $i=0$, then $(s(v_4),s(v_5))$ can be chosen in $7$ ways subject to the condition $\{s(v_4),s(v_5)\}\neq\{S,T\}$. Moreover, every vertex adjacent to $v_3$ has state in $\{N,S\}$, whereas every nonadjacent vertex has state in $\{N,S,T\}$. Therefore, the number of spanning subgraphs $H$ equals
$$7\times \sum_{j=0}^{n-6}\binom{n-6}{j}2^j3^{n-6-j}=7\times 5^{n-6}.$$

If $i\in\{1,2\}$, then $v_3$ is adjacent to $v_4$, to $v_5$, or to both vertices, yielding $3$ possibilities. In each case,
$s(v_4),s(v_5)\in\{N,S\}$, 
yielding $4$ possibilities for the pair $(s(v_4),s(v_5))$. Hence, the number of spanning subgraphs $H$ equals
$$3\times 4\times\sum_{j=0}^{n-6}\binom{n-6}{j}2^j3^{n-6-j}=12\times 5^{n-6}.$$

Therefore, the total number of spanning subgraphs $H$ such that $s(v_3)=S$ equals $19\times 5^{n-6}$. 
By symmetry, the number of spanning subgraphs $H$ such that $s(v_3)=T$ also equals $19\times 5^{n-6}$. 

Finally, every spanning subgraph $H$ of $B_{n,3n-8}$ in which $s$ and $t$ are disconnected belongs to exactly one of the following four classes: 
(a) $v_4v_5\notin H$; (b) $v_4v_5\in H$ and $s(v_3)=N$; (c) $v_4v_5\in H$ and $s(v_3)=S$; or (d) $v_4v_5\in H$ and $s(v_3)=T$. 
Therefore,
\begin{align*}
F(B_{n,3n-8}) &= (4\times 5^{n-4}-4^{n-4}) + (38\times 5^{n-6}-10\times 4^{n-6})
+ 2\times (19\times 5^{n-6})\\
&= 176\times 5^{n-6}-26\times 4^{n-6},
\end{align*}
and the lemma follows.
\end{proof}

\begin{lemma}\label{lemma:FddeB}
For each pair of integers $n$ and $d$ such that $n\geq 6$ and $d\geq 5$ it holds that $F^d(B_{n,3n-8})=176\times 5^{n-6}-26 \times 4^{n-6}$. 
\end{lemma}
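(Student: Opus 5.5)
Since $F^d(G)=2^m-\sum_i N_i^d(G)$ and $F(G)=2^m-\sum_i N_i(G)$, the natural plan is to show $N_i^d(B_{n,3n-8})=N_i(B_{n,3n-8})$ for every $i$. Then $F^d(B_{n,3n-8})=F(B_{n,3n-8})$, and Lemma~\ref{lemma:FdeB} gives the value. By Lemma~\ref{lemma:inherit}, it suffices to show that every path joining $s$ and $t$ in $B_{n,3n-8}$ has length at most $5$, since then any $d\geq 5$ works.

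The main step is bounding the length of an $s$--$t$ path $P$ in $B_{n,3n-8}$. Removing both terminals leaves $\hat{B}$, whose edge set consists of $v_3v_j$ for $j\in\{4,\ldots,n-1\}$ together with $v_4v_5$. The vertex $v_n$ is isolated in $\hat{B}$ and not adjacent to the terminals, so it never lies on $P$. Thus $\hat{B}$ restricted to $\{v_3,\ldots,v_{n-1}\}$ is a star centered at $v_3$ plus the single edge $v_4v_5$, forming the triangle $v_3v_4v_5$ with pendant leaves $v_6,\ldots,v_{n-1}$.

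An $s$--$t$ path is either the edge $st$, or $s$, then an internal path $Q$ in $\hat{B}$, then $t$. Its length is $|E(Q)|+2$. Since $s$ and $t$ are adjacent to every vertex in $\{v_3,\ldots,v_{n-1}\}$, I need to bound the length of a longest path in the star-plus-edge. A path in this graph can visit at most two leaves of the star: a pendant leaf can only be an endpoint, since its sole neighbor is $v_3$, and $v_3$ occurs at most once. The longest paths therefore look like $v_j,v_3,v_4,v_5$ or $v_j,v_3,v_k$, with at most $3$ edges. Hence $|E(P)|\leq 5$.

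I expect this case check to be the only real obstacle. It needs a short argument: the vertex set $\{v_3,v_4,v_5\}$ plus at most one leaf at each end, with leaves forced adjacent to $v_3$, caps $Q$ at $4$ vertices. Once this is done, Lemma~\ref{lemma:inherit} yields $N_i^d=N_i$ for all $i$, hence $F^d(B_{n,3n-8})=F(B_{n,3n-8})=176\times 5^{n-6}-26\times 4^{n-6}$ for $n\geq 6$ and $d\geq 5$.
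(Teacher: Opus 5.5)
Your proposal is correct and follows the paper's proof: every $s$--$t$ path in $B_{n,3n-8}$ has length at most $5$, so Lemma~\ref{lemma:inherit} gives $F^d(B_{n,3n-8})=F(B_{n,3n-8})$, and Lemma~\ref{lemma:FdeB} supplies the value. Your case analysis of the triangle $v_3v_4v_5$ with pendant leaves at $v_3$ justifies the length bound, which the paper states only as an observation. One wording fix: say ``pendant leaves'' rather than ``leaves of the star'', since $v_4,v_5$ are also star leaves and the path $v_6,v_3,v_4,v_5$ visits three of them.
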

\begin{proof}
Let $n$ and $d$ be integers such that $n\geq 6$ and $d\geq 5$. 
Observe that the length of each path in $B_{n,3n-8}$ joining its terminals is at most $5$. 
As $d\geq 5$, by Lemma~\ref{lemma:inherit} we obtain that $N_i^d(B_{n,3n-8})=N_i(B_{n,3n-8})$ 
for each $i\in \{1,2,\ldots,m\}$ thus $F_i^d(B_{n,3n-8})=F_i(B_{n,3n-8})$ and $F^d(B_{n,3n-8})=F(B_{n,3n-8})$, as required.
\end{proof}

\begin{proposition}\label{prop:3n-8}
No $d$-UMRTTG exists in $T_{n,3n-8}$ when $d\geq 5$ and $n\geq 11$.  
\end{proposition}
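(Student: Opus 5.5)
We plan to settle the dichotomy of Remark~\ref{remark:uniqueness}. Fix $d\geq 5$, $n\geq 11$ and $m=3n-8$. Since $2n-3<3n-8\leq 3n-6$, Lemma~\ref{lemma:d4}\ref{it2:medium} shows that $A_{n,m-2n+3}=A_{n,n-5}$ is the only $d$-LMRTTG near $1$ in $T_{n,3n-8}$. Because it is the unique one, $T_{n,m}^d(m)=\{A_{n,n-5}\}$ by Lemma~\ref{lemma:existenceLMRTTG}. Therefore it suffices to exhibit $\rho_0\in(0,1)$ and $H\in T_{n,3n-8}$ with $R_H^d(\rho_0)>R_{A_{n,n-5}}^d(\rho_0)$. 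We take $\rho_0=1/2$ and $H=B_{n,3n-8}$. As computed just before the definition of $B_{n,3n-8}$, this reduces to proving $F^d(B_{n,3n-8})<F^d(A_{n,n-5})$.

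Both quantities can be evaluated in closed form. Lemma~\ref{lemma:FddeB} gives $F^d(B_{n,3n-8})=176\cdot 5^{n-6}-26\cdot 4^{n-6}$. For $A_{n,n-5}$, every path joining $s$ and $t$ has length at most $4\leq d$: it is either $st$, a $2$-path $sv_it$, or it passes through the star centred at $v_3$ and so has length $3$ or $4$. Lemma~\ref{lemma:inherit} then gives $F^d(A_{n,n-5})=F(A_{n,n-5})$. Equation~\eqref{eq:F} with $r=n-5$ yields
\[
F(A_{n,n-5})=3^{2}\,(4\cdot 5^{n-5}-4^{n-5})=180\cdot 5^{n-5}-36\cdot 4^{n-5}=900\cdot 5^{n-6}-144\cdot 4^{n-6}.
\]

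The proof then reduces to the elementary inequality
\[
176\cdot 5^{n-6}-26\cdot 4^{n-6}<900\cdot 5^{n-6}-144\cdot 4^{n-6},
\]
which is equivalent to $118\cdot 4^{n-6}<724\cdot 5^{n-6}$. This holds for every $n\geq 6$ because $4^{n-6}\leq 5^{n-6}$ and $118<724$. Consequently $R_{B_{n,3n-8}}^d(1/2)>R_{A_{n,n-5}}^d(1/2)$, so $A_{n,n-5}$ is not a $d$-UMRTTG. Since any $d$-UMRTTG must be a $d$-LMRTTG near $1$, and $A_{n,n-5}$ is the only one, Remark~\ref{remark:uniqueness} implies that no $d$-UMRTTG exists.

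All the hard counting is already contained in Lemma~\ref{lemma:FdeB}, so the main obstacle is checking the hypotheses carefully rather than any new estimate. The first point to verify is that $B_{n,3n-8}$ actually lies in $T_{n,3n-8}$, so that it is a legitimate competitor. Its edge count is $|E(A_{n,n-5})|-2+2=2n-3+n-5=3n-8$. It is also simple: $v_3v_{n-1}$ is not an edge of $A_{n,n-5}$ because $v_3$ is joined only to $v_4,\dots,v_{n-2}$, and $v_4v_5$ is likewise absent. The second point is to apply equation~\eqref{eq:F} with the correct index $r=n-5\in\{0,\dots,n-3\}$, which holds since $n\geq 5$.
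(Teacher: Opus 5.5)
Your overall strategy is the paper's: $A_{n,n-5}$ is the unique $d$-LMRTTG near $1$, Remark~\ref{remark:uniqueness} applies, and you compare $F^d(B_{n,3n-8})$ with $F^d(A_{n,n-5})$ at $\rho=1/2$. However, the final computation contains an arithmetic error, and that error hides the only place where the hypothesis $n\geq 11$ is needed. With $r=n-5$, equation~\eqref{eq:F} gives
\[
F(A_{n,n-5})=9\,(4\cdot 5^{n-5}-4^{n-5})=36\cdot 5^{n-5}-9\cdot 4^{n-5}=180\cdot 5^{n-6}-36\cdot 4^{n-6},
\]
not $180\cdot 5^{n-5}-36\cdot 4^{n-5}$. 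You effectively shifted the exponent twice, which inflates $F(A_{n,n-5})$ by roughly a factor of $5$. With the correct value the leading coefficients are $176$ versus $180$, so the comparison is delicate. The inequality you actually need is $4\cdot 5^{n-6}>10\cdot 4^{n-6}$, that is, $(5/4)^{n-6}>5/2$.

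Since $(5/4)^4=625/256<5/2<3125/1024=(5/4)^5$, this inequality holds exactly when $n\geq 11$. Your claim that it holds for every $n\geq 6$ is false. For $n=10$ one gets $F(B_{10,22})=103344>103284=F(A_{10,5})$, so $B_{n,3n-8}$ is not a witness there. The fact that your argument never used $n\geq 11$ should have been a warning sign. Once you correct the arithmetic and use $n\geq 11$ to get $(5/4)^{n-6}\geq(5/4)^5>5/2$, the proof goes through exactly as in the paper. Your other checks are correct: $B_{n,3n-8}\in T_{n,3n-8}$, and every terminal-to-terminal path in $A_{n,n-5}$ has length at most $4$, so Lemma~\ref{lemma:inherit} applies.
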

\begin{proof}
Let $n$ and $d$ be integers such that $d\geq 5$ and $n\geq 11$. 
As $d\geq 5$, by Lemma~\ref{lemma:d4}\ref{it2:medium}, the only 
$d$-LMRTTG near $1$ in $T_{n,3n-8}$ is
$A_{n,n-5}$. By Remark~\ref{remark:uniqueness}, it suffices to prove that $R_{B_{n,3n-8}}^d(1/2)>R_{A_{n,n-5}}(1/2)$, or equivalently, that 
$F^d(B_{n,3n-8})<F^d(A_{n,n-5})$. On the one hand, Lemma~\ref{lemma:FddeB} gives that $F^d(B_{n,3n-8})=176\times 5^{n-6}-26 \times 4^{n-6}$. 
On the other hand, as the length of each of the paths joining $s$ and $t$ in $A_{n,n-5}$ is at most $4$, 
Lemma~\ref{lemma:inherit} gives that $N_i^d(A_{n,n-5})=N_i(A_{n,n-5})$ for each $i\in \{1,2,\ldots,m\}$ hence 
$F^d(A_{n,n-5})=F(A_{n,n-5})$. By equation~\eqref{eq:F}, 
$F(A_{n,n-5})=3^{2}\times (4\times 5^{n-5}-4^{n-5})$ thus 
$F^d(A_{n,n-5})=3^{2}\times (4\times 5^{n-5}-4^{n-5})=36\times 5^{n-5}-9\times 4^{n-5} = 180\times 5^{n-6}-36\times 4^{n-6}$. 

Therefore, it suffices to prove that $176\times 5^{n-6}-26 \times 4^{n-6}<180\times 5^{n-6}-36\times 4^{n-6}$, 
or equivalently, that $\left(\frac{5}{4}\right)^{n-6}>\frac{5}{2}$, which holds since $n\geq 11$. The proposition follows.
\end{proof}

\section{Concluding remarks}\label{section:conclusions}
The goal of this work is to determine, as a function of the distance constraint $d$, whether $d$-UMRTTGs exist in each nonempty set $T_{n,m}$. Corollary~\ref{cor:d1} and Theorem~\ref{theorem:2UMRTTG} establish that $1$-UMRTTGs and $2$-UMRTTGs always exist. Theorem~\ref{theorem:almostcomplete} asserts that $Z_n$ is the only $d$-UMRTTG in $T_{n,\binom{n}{2}-1}$ for $d\geq 3$. Since $T_{n,\binom{n}{2}}$ consists only of the complete graph $K_n$ with two terminals, $K_n$ is the unique $d$-UMRTTG for every positive integer $d$.

Theorem~\ref{theorem:nonexistenced>3} establishes the nonexistence of $d$-UMRTTGs for every $d\geq 4$ and every $n\geq 11$ whenever $20\leq m\leq 3n-9$ or $3n-5\leq m\leq \binom{n}{2}-2$. Finally, Proposition~\ref{prop:3n-8} establishes the nonexistence of $d$-UMRTTGs in $T_{n,3n-8}$ for $d\geq 5$ and $n\geq 11$.

Table~\ref{table:summary} summarizes known results on the existence and nonexistence of $d$-UMRTTGs. The existence of $3$-UMRTTGs remains open for all admissible values of $m$, except when $m\in\{\binom{n}{2}-1,\binom{n}{2}\}$, for which $3$-UMRTTGs exist. In addition, the cases with $d=4$ and $m=3n-8$, as well as those with $d\geq 3$ and $m\in\{3n-7,3n-6\}$, are still unresolved.

\begin{table}[ht]
\centering
\renewcommand{\arraystretch}{1.2}
\setlength{\tabcolsep}{2pt}
\scriptsize
\begin{tabular}{|c|c|c|c|c|c|c|c|}
\hline
$d$ 
& $m\leq 2n-3$
& $2n-3<m\leq 3n-9$
& $m=3n-8$
& $m \in \{3n-7,3n-6\}$
& $3n-5\leq m\leq \binom{n}{2}-2$
& $m\in \{\binom{n}{2}-1,\binom{n}{2}\}$
\\
\hline
$1$
& Existence
& Existence
& Existence
& Existence
& Existence
& Existence
\\
\hline
$2$
& Existence
& Existence
& Existence
& Existence
& Existence
& Existence
\\
\hline
$3$
& Open
& Open
& Open
& Open
& Open
& Existence
\\
\hline
$4$
& Nonexistence
& Nonexistence
& Open
& Open
& Nonexistence
& Existence
\\
\hline
$d\geq 5$
& Nonexistence
& Nonexistence
& Nonexistence
& Open
& Nonexistence
& Existence
\\
\hline
\end{tabular}
\caption{Existence or nonexistence of $d$-UMRTTGs in $T_{n,m}$ when $n\geq 11$ and $m\geq 20$.}
\label{table:summary}
\end{table}

\section*{Acknowledgment}
This work is partially supported by City University of New York project entitled \emph{On the problem of characterizing graphs with maximum number of spanning trees} with grant number 66165-00. The author wishes to thank Professor Louis Petingi for his insightful comments which improved the presentation of this manuscript, as well as Professors Martín Safe and Guillermo Durán for their academic support and guidance.

\end{document}